\setlist{topsep=0ex}
\def\namedlabel#1#2{\begingroup
    #2%
    \def\@currentlabel{#2}%
    \phantomsection\label{#1}\endgroup
}
    \DeclareMathOperator{\dom}{{\rm dom}}
    \newcommand{\Bwf}{\mathcal{B}}
    \newcommand{\Cwf}{\mathcal{C}}
    \newcommand{\Ewf}{\mathcal{E}}
    \newcommand{\Iwf}{\mathcal{I}}
    \newcommand{\Jwf}{\mathcal{J}}
    \newcommand{\Mwf}{\mathcal{M}}
    \newcommand{\Nwf}{\mathcal{N}}
    \newcommand{\SNwf}{\mathcal{SN}}
    \newcommand{\NAwf}{\mathcal{N\!A}}
    \newcommand{\Pwf}{\mathcal{P}}
    \newcommand{\Scal}{\mathcal{S}}
    \newcommand{\Swf}{\mathcal{S}}
    \newcommand{\bfrak}{\mathfrak{b}}
    \newcommand{\cfrak}{\mathfrak{c}}
    \newcommand{\dfrak}{\mathfrak{d}}
    \newcommand{\Cbf}{\mathbf{C}}
    \newcommand{\Dbf}{\mathbf{D}}
    \newcommand{\Ior}{\mathbb{I}}
    \newcommand{\menos}{\smallsetminus}
    \newcommand{\vacio}{\varnothing}
    \DeclareMathOperator{\pts}{\mathcal{P}}
    \newcommand{\Q}{\mathbb{Q}}
    \newcommand{\R}{\mathbb{R}}
    \newcommand{\imp}{\mathrel{\mbox{$\Rightarrow$}}}
    \newcommand{\la}{\langle}
    \newcommand{\ra}{\rangle}
    \DeclareMathOperator{\cf}{\mbox{\rm cf}}
    \newcommand{\frestr}{{\upharpoonright}}
    \DeclareMathOperator{\Fn}{\mbox{\rm Fn}}
    \DeclareMathOperator{\add}{\mathrm{add}}
    \DeclareMathOperator{\non}{\mbox{\rm non}}
    \DeclareMathOperator{\cov}{\mbox{\rm cov}}
    \DeclareMathOperator{\cof}{\mbox{\rm cof}}
    \newcommand{\leqT}{\mathrel{\mbox{$\preceq_{\mathrm{T}}$}}}
    \newcommand{\eqT}{\mathrel{\mbox{$\cong_{\mathrm{T}}$}}}
    \DeclareMathOperator{\Seq}{\mathrm{seq}}
    \DeclareMathOperator{\Seqw}{\mathrm{seq}_{<\omega}}
    \newcommand{\Mg}{\mathbf{Mg}}
    \newcommand{\Ed}{\mathbf{Ed}}
    \newcommand{\Rbf}{\mathbf{R}}
    \newcommand{\seq}[2]{\left\la #1 :\, #2\right\ra}
    \newcommand{\set}[2]{\left\{#1 :\, #2\right\}}
    \newcommand{\vfa}{\mathfrak{v}^\forall}
    \newcommand{\cfa}{\mathfrak{c}^\forall}
    \newcommand{\vxt}{\mathfrak{v}^\exists}
    \newcommand{\cxt}{\mathfrak{c}^\exists}
    \newcommand{\Lc}{\mathbf{Lc}}
    \newcommand{\aLc}{\mathbf{aLc}}
    \newcommand{\blc}{\mathfrak{b}^{\mathrm{Lc}}}
    \newcommand{\dlc}{\mathfrak{d}^{\mathrm{Lc}}}
    \newcommand{\balc}{\mathfrak{b}^{\mathrm{aLc}}}
    \newcommand{\dalc}{\mathfrak{d}^{\mathrm{aLc}}}
    \newcommand{\cc}{\mathrm{cc}}
    \newcommand{\cp}{\mathrm{c}}
    \newcommand{\nc}{\mathrm{Nc}}
    \newcommand{\Lb}{\mathrm{Lb}}
    \newcommand{\id}{\mathrm{id}}
\newcommand{\minLc}{\mathrm{minLc}}
\newcommand{\supLc}{\mathrm{supLc}}
\newcommand{\supaLc}{\mathrm{supaLc}}
\newcommand{\minaLc}{\mathrm{minaLc}}    
\newcommand{\baire}{\omega^\omega}
\newcommand{\cantor}{2^\omega}
\newcommand{\tbf}{\mathbf{t}}
\newcommand{\dloc}{\mathfrak{d}^{\mathrm{wLc}}}
\newcommand{\bloc}{\mathfrak{b}^{\mathrm{wLc}}}
\newcommand{\tilb}{\tilde b}
    \definecolor{carrotorange}{rgb}{0.93, 0.57, 0.13}
    \definecolor{dodger}{rgb}{0.0,0.5,1.0}
    \newcommand{\Swfw}{\Swf^{\mathrm{w}}}
    \newcommand{\wLc}{\mathbf{wLc}}
\title{Localization and anti-localization cardinals}
\author{Miguel A.~Cardona%
\thanks{Email: \href{mailto:miguel.cardona@upjs.sk}{\texttt{miguel.cardona@upjs.sk}}\newline
Partially supported by the Slovak Research and Development Agency under Contract No.~APVV-20-0045 and by Pavol Jozef \v{S}af\'arik University at a postdoctoral position.\smallskip}
\and 
        Diego A.~Mejía%
        \thanks{Email: \href{mailto:diego.mejia@shizuoka.ac.jp}{\texttt{diego.mejia@shizuoka.ac.jp}}\newline 
%(autor de correspondencia)
Supported by the Grant-in-Aid for Early Career Scientists 18K13448, Japan Society for the Promotion of Science.}
}
\date{{\normalsize
${}^*$Institute of Mathematics, Pavol Jozef \v{S}af\'arik University,\\
041 80, Jesenn\'a 5, 040 01 Ko\v{s}ice, Slovakia\\\medskip
${}^\dagger$Creative Science Course (Mathematics), Faculty of Science, Shizuoka University\\ Ohya 836, Suruga-ku, Shizuoka 422--8529, Japan}
}
\begin{document}

\makeatletter
\def\@roman#1{\romannumeral #1}
\makeatother

\newcounter{enuAlph}
\renewcommand{\theenuAlph}{\Alph{enuAlph}}

\numberwithin{equation}{section}
\renewcommand{\theequation}{\thesection.\arabic{equation}}

\theoremstyle{plain}
  \newtheorem{theorem}[equation]{Theorem}
  \newtheorem{corollary}[equation]{Corollary}
  \newtheorem{lemma}[equation]{Lemma}
  \newtheorem{mainlemma}[equation]{Main Lemma}
  \newtheorem{prop}[equation]{Proposition}
  \newtheorem{clm}[equation]{Claim}
  \newtheorem{fct}[equation]{Fact}
  \newtheorem{question}[equation]{Question}
  \newtheorem{problem}[equation]{Problem}
  \newtheorem{conjecture}[equation]{Conjecture}
  \newtheorem*{thm}{Theorem}
  \newtheorem{teorema}[enuAlph]{Theorem}
  \newtheorem*{corolario}{Corollary}
  \newtheorem*{scnmsc}{(SCNMSC)}
\theoremstyle{definition}
  \newtheorem{definition}[equation]{Definition}
  \newtheorem{example}[equation]{Example}
  \newtheorem{remark}[equation]{Remark}
  \newtheorem{notation}[equation]{Notation}
  \newtheorem{context}[equation]{Context}
  \newtheorem{exer}[equation]{Exercise}
  \newtheorem{exerstar}[equation]{Exercise*}

  \newtheorem*{defi}{Definition}
  \newtheorem*{acknowledgements}{Acknowledgements}
  
\def\sectionautorefname{Section}
\def\subsectionautorefname{Subsection}

\maketitle

\begin{abstract}
This paper is intended to survey the basics of localization and anti-localization cardinals on the reals, and its interplay with notions and cardinal characteristics related to measure and category.
%
%This paper is intended  to survey the state of the art of the theory of localization and anti-localization cardinals and focused on the interplay with 
%other cardinal characteristics. In particular, these in Cicho\'n's diagram. To round off the present written account of this survey,
%we also include other inequalities between localization and anti-localization cardinal and~\red{complete}.
\end{abstract}

% \begin{enumerate}[label=$(\boxplus_{\arabic*})$]
%     \item\label{abc} a
%     \item b
% \end{enumerate}

% Referer to~\ref{abc}.

% \begin{enumerate}[label=$(\boxplus)$]
%     \item[$\boxplus$]\label{cuadro} soy el cuadrito 
% \end{enumerate}

% refer to \ref{cuadro}.

% \begin{equation*}\label{cuadri}\tag{$\boxplus$}
%     1+1
% \end{equation*}

% Mirar \ref{cuadri}

% %\begin{enumerate}[label=$(\boxplus_{\textrm{\Alph*}})$] %\roman*, \Roman*, \Alph*, \alph*....
%     %\item a
%     %\item b
%% \end{enumerate}

% ctrl + / poner o quitar muchos o pocos %

\section{Introduction}
% The purpose of this work is to discuss four
% set-theoretic combinatorics properties that arise from the study of the  measure and category on the Cantor Space $2^\omega$, which was introduced by Bartoszynski~\cite{Ba1984,Ba1987}. These properties mainly come from the research of the properties of the associated ideals of the measure and category (called cardinal characteristics)

% The localization and anti-localization cardinals play a fundamental role in the investigation of the cardinal characteristics associated with ideals of the measure and the category. These cardinals have been introduced by Bartoszyinski~\cite{Ba1984,Ba1987} and since the decade of 1980 have been a relevant object of research in Set Theory of the reals by a number of people.  The purpose of this survey is to provide 

% The localization and anti-localization cardinals arise in many cardinal characteristics associated with ideals of the reals. In particular, related to cardinal characteristics associated with ideals of the measure and category. 

The localization and anti-localization cardinals were introduced in the remarkable work of Miller~\cite{Mi1982} and Bartoszyi\'nski~\cite{Ba1984,Ba1987} to characterize the cardinal characteristics associated with measure and category, which also give tools to prove the inequalities in Cicho\'n's diagram (\autoref{FigCichon}).

%Most of the cardinal characteristics in Cicho\'n's diagram (\autoref{FigCichon}) have been characterized in terms of the localization and anti-localization cardinals. These cardinals were introduced by Bartoszyi\'nski in his remarkable work~\cite{Ba1984,Ba1987}. Though we assume the reader is familiar with these cardinals and this diagram, these are introduced~\autoref{sec:relchac} and~\autoref{sec:Lc}, respectively. 

\begin{figure}[ht!]
\centering
\begin{tikzpicture}%[scale=0.9]
\small{
\node (aleph1) at (-1,3) {$\aleph_1$};
\node (addn) at (1,3){$\add(\Nwf)$};
\node (covn) at (1,7){$\cov(\Nwf)$};
\node (nonn) at (9,3) {$\non(\Nwf)$} ;
\node (cfn) at (9,7) {$\cof(\Nwf)$} ;
\node (addm) at (3.66,3) {$\add(\Mwf)$} ;
\node (covm) at (6.33,3) {$\cov(\Mwf)$} ;
\node (nonm) at (3.66,7) {$\non(\Mwf)$} ;
\node (cfm) at (6.33,7) {$\cof(\Mwf)$} ;
\node (b) at (3.66,5) {$\bfrak$};
\node (d) at (6.33,5) {$\dfrak$};
\node (c) at (11,7) {$\cfrak$};
%%%%  diagram 1a
\draw (aleph1) edge[->] (addn)
      (addn) edge[->] (covn)
      (covn) edge [->] (nonm)
      (nonm)edge [->] (cfm)
      (cfm)edge [->] (cfn)
      (cfn) edge[->] (c);
\draw
   (addn) edge [->]  (addm)
   (addm) edge [->]  (covm)
   (covm) edge [->]  (nonn)
   (nonn) edge [->]  (cfn);
\draw (addm) edge [->] (b)
      (b)  edge [->] (nonm);
\draw (covm) edge [->] (d)
      (d)  edge[->] (cfm);
\draw (b) edge [->] (d);

}
\end{tikzpicture}
\caption{Cicho\'n's diagram. The arrows mean $\leq$. Also
  $\add(\Mwf)=\min\{\bfrak,\cov(\Mwf)\}$ and $\cof(\Mwf)=\max\{\dfrak,\non(\Mwf)\}$.}
  \label{FigCichon}
\end{figure}

Over the years, the localization and anti-localization cardinals have been relevant objects of study propelled  by a number of people~\cite{paw85, GS93,K8,KS09,KS12, BrM,KO14,CM,KM, GK,CKM}. Although a large number of papers have been published,  we feel the need to have a survey about localization and anti-localization cardinals that could serve as a reference note. %\red{The interest in localization and anti-localization cardinals started six years ago. Our encouraged interest in this topic over the years led us to write this survey.}

In this paper, we attempt to present a careful and detailed study of the localization and anti-localization cardinals, filling in any gaps that we perceived. The purpose is to establish the well-known characterizations of classical cardinal characteristics related to measure and category that involve the localization and anti-localization cardinals, with the hope to making them more accessible to people interested in combinatorics of the real line.  

In \autoref{sec:relchac} and~\ref{sec:Lc}, we review the basic notation and results about relational systems and localization and anti-localization cardinals. The characterizations of cardinal characteristics related to measure and category appear in the sections that follow:

\autoref{sec:NLc}: The additivity and cofinality of the null ideal (Bartoszy\'nski~\cite{Ba1984}).

\autoref{MaLc}: The covering and uniformity of the meager ideal (Miller~\cite{Mi1982}, Bartoszy\'nski \cite{Ba1987}).

\autoref{NaLc}: The covering and uniformity of the null ideal (Bartoszy\'nski~\cite{B1988}).

\autoref{sec:ELc}: The covering and uniformity of $\Ewf$, the ideal generated by the $F_\sigma$ measure zero sets (Bartoszy\'nski and Shelah~\cite{BS1992}, \cite{Car23}).

\autoref{sec:other}: The uniformity of the ideal of null-additive sets (Pawlikowski~\cite{paw85}).

% \begin{itemize}
%     \item additivity and cofinality of the null ideal 

%     \item covering and uniformity of the meager ideal 

%     \item covering and uniformity of the ideal generated by the closed measure zero sets. 
% \end{itemize}

The previous characterizations also appear in~\cite{BJ}.

\section{Relational systems and cardinal characteristics}\label{sec:relchac}

We start by presenting the framework of relational systems, which is very useful to define many classical cardinal characteristics. This was introduced in~\cite{Vojtas}, see also~\cite{blass}.

\begin{definition}\label{def:relsys}
We say that $\Rbf=\la X, Y, \sqsubset\ra$ is a \textit{relational system} if it consists of two non-empty sets $X$ and $Y$ and a relation $\sqsubset$.% contained in $X\times Y$.
\begin{enumerate}[label=(\arabic*)]
    \item A set $F\subseteq X$ is \emph{$\Rbf$-bounded} if $\exists\, y\in Y\ \forall\, x\in F\colon x \sqsubset y$. 
    \item A set $D\subseteq Y$ is \emph{$\Rbf$-dominating} if $\forall\, x\in X\ \exists\, y\in D\colon x \sqsubset y$. 
\end{enumerate}

We associate two cardinal characteristics with this relational system $\Rbf$: 
\begin{itemize}
    \item[{}] $\bfrak(\Rbf):=\min\{|F|:\, F\subseteq X  \text{ is }\Rbf\text{-unbounded}\}$ the \emph{unbounding number of $\Rbf$}, and
    
    \item[{}] $\dfrak(\Rbf):=\min\{|D|:\, D\subseteq Y \text{ is } \Rbf\text{-dominating}\}$ the \emph{dominating number of $\Rbf$}.
\end{itemize}
\end{definition}
The dual of $\Rbf$ is defined by $\Rbf^\perp:=\la Y, X, \sqsubset^\perp\ra$ where $y\sqsubset^\perp x$ iff $x\not\sqsubset y$. Note that $\bfrak(\Rbf^\perp)=\dfrak(\Rbf)$ and $\dfrak(\Rbf^\perp)=\bfrak(\Rbf)$. 

Below, we define directed preorders, which give a very representative general example of relational systems. 

\begin{definition}\label{examSdir}
We say that $\la S,\leq_S\ra$ is a \emph{directed preorder} if it is a preorder (i.e.\ $\leq_S$ is a reflexive and transitive relation on $S$) such that 
\[\forall\, x, y\in S\ \exists\, z\in S\colon x\leq_S z\text{ and }y\leq_S z.\] 
A directed preorder $\la S,\leq_S\ra$ is seen as the relational system $S=\la S, S,\leq_S\ra$, and their associated cardinal characteristics are denoted by $\bfrak(S)$ and $\dfrak(S)$. The cardinal $\dfrak(S)$ is actually the \emph{cofinality of $S$}, typically denoted by $\cof(S)$ or $\cf(S)$.
\end{definition}

Recall that, whenever $S$ is a a directed preorder without maximum element, $\bfrak(S)$ is infinite and regular, and $\bfrak(S)\leq\cf(\dfrak(S))\leq\dfrak(S)\leq|S|$. Even more, if $L$ is a linear order without maximum then $\bfrak(L)=\dfrak(L)=\cf(L)$.

\begin{example}\label{exm:Baire}
Consider $\omega^\omega=\la\omega^\omega,\leq^*\ra$, %where, for $x,y\in\omega^\omega$, $x\leq^* y$ iff $x(i)\leq y(i)$ for all but finitely many $i<\omega$. 
which is a directed preorder. The cardinal characteristics $\bfrak:=\bfrak(\omega^\omega)$ and $\dfrak:=\dfrak(\omega^\omega)$ are the well-known \emph{bounding number} and \emph{dominating number}, respectively.
\end{example}

The cardinal characteristics associated with an ideal can be characterized by relational systems as well.

\begin{example}\label{exm:Iwf}
For $\Iwf\subseteq\pts(X)$, define the relational systems: %(containing $[X]^{<\aleph_0}$).
\begin{enumerate}[label=(\arabic*)]
    \item $\Iwf:=\la\Iwf,\Iwf,\subseteq\ra$, which is a directed partial order when $\Iwf$ is closed under unions (e.g.\ an ideal).
    
    \item $\Cbf_\Iwf:=\la X,\Iwf,\in\ra$.
\end{enumerate}
\end{example}

\begin{fct}\label{CarCh}
If $\Iwf$ is an ideal on $X$, containing $[X]^{<\aleph_0}$, then
\begin{multicols}{2}
\begin{enumerate}[label= \rm (\alph*)]
    \item $\bfrak(\Iwf)=\add(\Iwf)$, the additivity of $\Iwf$. %:=\min\set{|\Jwf|}{\Jwf\subseteq\Iwf,\,\bigcup\Jwf\notin\Iwf}$. 
    
    \item $\dfrak(\Iwf)=\cof(\Iwf)$, the cofinality of $\Iwf$.  %:=\min\set{|\Jwf|}{\Jwf\subseteq\Iwf,\ \forall\, A\in\Iwf\ \exists\, B\in \Jwf\colon A\subseteq B}$.
    
    \item $\dfrak(\Cbf_\Iwf)=\cov(\Iwf)$, the covering of $\Iwf$. %:=\min\set{|\Jwf|}{\Jwf\subseteq\Iwf,\,\bigcup\Jwf=X}$. 
    
    \item $\bfrak(\Cbf_\Iwf)=\non(\Iwf)$, the uniformity of $\Iwf$.%:=\min\set{|A|}{A\subseteq X,\,A\notin\Iwf}$. 
\end{enumerate}
\end{multicols}
\end{fct}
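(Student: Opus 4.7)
The plan is to verify each of (a)--(d) by directly unpacking Definition 2.1 against the two relational systems in Example 2.5. The only substantive input is that $\Iwf$ is an ideal on $X$, i.e.\ downward closed under $\subseteq$ and closed under finite unions; the hypothesis $[X]^{<\aleph_0}\subseteq\Iwf$ is not used in the identities themselves, but secures the usual convention that each invariant is infinite.

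First I would prove the key reformulation underlying (a): a family $F\subseteq\Iwf$ is $\la\Iwf,\Iwf,\subseteq\ra$-unbounded iff $\bigcup F\notin\Iwf$. If $\bigcup F\in\Iwf$, then $Y:=\bigcup F$ bounds $F$; conversely, if some $Y\in\Iwf$ bounds $F$, then $\bigcup F\subseteq Y$, so downward closure forces $\bigcup F\in\Iwf$. Since $\add(\Iwf)=\min\{|F|:F\subseteq\Iwf,\ \bigcup F\notin\Iwf\}$, this is exactly $\bfrak(\Iwf)$, giving (a). For (b), a set $D\subseteq\Iwf$ is $\la\Iwf,\Iwf,\subseteq\ra$-dominating precisely when each $X\in\Iwf$ has some $Y\in D$ with $X\subseteq Y$, which is the definition of cofinality in the directed preorder $(\Iwf,\subseteq)$; hence $\dfrak(\Iwf)=\cof(\Iwf)$.

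Next I would turn to $\Cbf_\Iwf=\la X,\Iwf,\in\ra$. For (c), $D\subseteq\Iwf$ is $\Cbf_\Iwf$-dominating iff every $x\in X$ lies in some $Y\in D$, i.e.\ $X=\bigcup D$, so $\dfrak(\Cbf_\Iwf)=\cov(\Iwf)$. For (d), $F\subseteq X$ is $\Cbf_\Iwf$-unbounded iff no $Y\in\Iwf$ contains every element of $F$, i.e.\ $F\nsubseteq Y$ for all $Y\in\Iwf$; again by downward closure this is equivalent to $F\notin\Iwf$, so $\bfrak(\Cbf_\Iwf)=\non(\Iwf)$.

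I do not expect a genuine obstacle: each item reduces to a one-line translation between the relational-system vocabulary (bounded/unbounded, dominating) and the ideal-theoretic vocabulary ($\add$, $\cof$, $\cov$, $\non$). The only point worth emphasizing is that downward closure of $\Iwf$ is exactly what converts the quantifier ``contained in some $Y\in\Iwf$'' into genuine membership in $\Iwf$, which is precisely the bridge used in both (a) and (d).
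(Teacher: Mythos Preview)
Your proposal is correct; the paper states this result as a Fact without proof, and your direct unpacking of the definitions is exactly the standard verification one would give. One minor quibble: in~(b) you could also have simply invoked \autoref{examSdir}, where $\dfrak(S)=\cof(S)$ is given as the very definition for any directed preorder, so no argument is needed there at all.
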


%These cardinals are referred to as the \emph{additivity,  cofinality, covering, and uniformity of $\Iwf$},
%respectively.

The Tukey connection is a useful notion to determine relations between cardinal characteristics.

\begin{definition}\label{def:Tukey}
Let $\Rbf=\la X,Y,\sqsubset\ra$ and $\Rbf'=\la X',Y',\sqsubset'\ra$ be relational systems. We say that $(\Psi_-,\Psi_+)\colon\Rbf\to\Rbf'$ is a \emph{Tukey connection from $\Rbf$ into $\Rbf'$} if 
 $\Psi_-\colon X\to X'$ and $\Psi_+\colon Y'\to Y$ are functions such that  \[\forall\, x\in X\ \forall\, y'\in Y'\colon \Psi_-(x) \sqsubset' y' \Rightarrow x \sqsubset \Psi_+(y').\]
The \emph{Tukey order} between relational systems is defined by
$\Rbf\leqT\Rbf'$ iff there is a Tukey connection from $\Rbf$ into $\Rbf'$. \emph{Tukey equivalence} is defined by $\Rbf\eqT\Rbf'$ iff $\Rbf\leqT\Rbf'$ and $\Rbf'\leqT\Rbf$
\end{definition}

\begin{fct}\label{fct:Tukey}
Assume that $\Rbf=\la X,Y,\sqsubset\ra$ and $\Rbf'=\la X',Y',\sqsubset'\ra$ are relational systems and that $(\Psi_-,\Psi_+)\colon \Rbf\to\Rbf'$ is a Tukey connection.
\begin{enumerate}[label=\rm(\alph*)]
    \item If $D'\subseteq Y'$ is $\Rbf'$-dominating, then $\Psi_+[D']$ is $\Rbf$-dominating.
    \item $(\Psi_+,\Psi_-)\colon (\Rbf')^\perp\to\Rbf^\perp$ is a Tukey connection.
    \item If $E\subseteq X$ is $\Rbf$-unbounded then $\Psi_-[E]$ is $\Rbf'$-unbounded.
\end{enumerate}
\end{fct}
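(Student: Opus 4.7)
The plan is to prove (a) directly from the definitions, derive (b) as a straightforward contrapositive, and then obtain (c) by combining (a) and (b).

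For (a), I would take an arbitrary $x \in X$ and apply the $\Rbf'$-dominating property of $D'$ to the point $\Psi_-(x) \in X'$, producing some $y' \in D'$ with $\Psi_-(x) \sqsubset' y'$. The Tukey condition then yields $x \sqsubset \Psi_+(y')$, and since $\Psi_+(y') \in \Psi_+[D']$, this witnesses that $\Psi_+[D']$ is $\Rbf$-dominating.

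For (b), I would first verify the types: in $(\Rbf')^\perp = \la Y', X', \sqsubset'^\perp\ra$ and $\Rbf^\perp = \la Y, X, \sqsubset^\perp\ra$, the first coordinate of a candidate Tukey connection should be a map $Y' \to Y$ (which $\Psi_+$ is) and the second a map $X \to X'$ (which $\Psi_-$ is). The required implication, for $y'\in Y'$ and $x\in X$, is
\[
\Psi_+(y') \sqsubset^\perp x \;\Rightarrow\; y' \sqsubset'^\perp \Psi_-(x),
\]
which by definition of the dual relations rewrites as $x \not\sqsubset \Psi_+(y') \Rightarrow \Psi_-(x) \not\sqsubset' y'$. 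This is exactly the contrapositive of the Tukey hypothesis, so (b) follows immediately.

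For (c), I would first note that $E \subseteq X$ is $\Rbf$-unbounded iff $E$ is $\Rbf^\perp$-dominating, since $\Rbf^\perp$-domination of $E$ unpacks to $\forall y \in Y\, \exists x \in E\colon x \not\sqsubset y$, which is precisely unboundedness of $E$ in $\Rbf$. By (b), $(\Psi_+,\Psi_-)\colon (\Rbf')^\perp \to \Rbf^\perp$ is a Tukey connection, so applying (a) to this connection, $\Psi_-[E]$ is $(\Rbf')^\perp$-dominating, hence $\Rbf'$-unbounded. The only real obstacle is bookkeeping — one must keep straight which variable ranges over which set in the dualized system — but writing the two definitions side by side makes the verification mechanical.
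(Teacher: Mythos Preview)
Your proof is correct. The paper states this result as a \emph{Fact} without proof, so there is no approach to compare against; your argument is the standard one and is exactly what the reader is expected to supply.
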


\begin{corollary}\label{cor:Tukeyval}
\ 
\begin{enumerate}[label=\rm(\alph*)]
    \item $\Rbf\leqT\Rbf'$ implies $(\Rbf')^\perp\leqT\Rbf^\perp$.
    \item $\Rbf\leqT\Rbf'$ implies $\bfrak(\Rbf')\leq\bfrak(\Rbf)$ and $\dfrak(\Rbf)\leq\dfrak(\Rbf')$.
    \item $\Rbf\eqT\Rbf'$ implies $\bfrak(\Rbf')=\bfrak(\Rbf)$ and $\dfrak(\Rbf)=\dfrak(\Rbf')$.
\end{enumerate}
\end{corollary}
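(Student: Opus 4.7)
The plan is to deduce all three clauses directly from \autoref{fct:Tukey}, which already does the combinatorial work. So the proposal is essentially a book-keeping exercise, with no genuine obstacle; the only thing to be careful about is applying the correct clause of \autoref{fct:Tukey} in each direction.

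For clause (a), the statement is literally \autoref{fct:Tukey}(b), so I would simply invoke it: if $(\Psi_-,\Psi_+)$ witnesses $\Rbf\leqT\Rbf'$, then $(\Psi_+,\Psi_-)$ witnesses $(\Rbf')^\perp\leqT\Rbf^\perp$.

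For clause (b), I would argue the two inequalities separately. To show $\dfrak(\Rbf)\leq\dfrak(\Rbf')$, pick an $\Rbf'$-dominating set $D'\subseteq Y'$ of size $\dfrak(\Rbf')$; by \autoref{fct:Tukey}(a), $\Psi_+[D']$ is $\Rbf$-dominating, and since $|\Psi_+[D']|\leq|D'|$, we obtain $\dfrak(\Rbf)\leq\dfrak(\Rbf')$. To show $\bfrak(\Rbf')\leq\bfrak(\Rbf)$, one can either dualize by using (a) together with the identities $\bfrak(\Rbf^\perp)=\dfrak(\Rbf)$ and $\dfrak(\Rbf^\perp)=\bfrak(\Rbf)$, or argue directly using \autoref{fct:Tukey}(c): take an $\Rbf$-unbounded set $E\subseteq X$ of size $\bfrak(\Rbf)$, so $\Psi_-[E]$ is $\Rbf'$-unbounded and has cardinality at most $|E|$. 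Either way the inequality follows.

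Clause (c) is then immediate: $\Rbf\eqT\Rbf'$ gives Tukey connections in both directions, so applying (b) twice yields $\bfrak(\Rbf)\leq\bfrak(\Rbf')\leq\bfrak(\Rbf)$ and similarly $\dfrak(\Rbf)\leq\dfrak(\Rbf')\leq\dfrak(\Rbf)$. The whole corollary is thus a cleanup of \autoref{fct:Tukey}, and I expect the writeup to be only a few lines; there is no step I would flag as hard.
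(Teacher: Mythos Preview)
Your proposal is correct and follows exactly the route the paper intends: the corollary is stated without proof, as an immediate consequence of \autoref{fct:Tukey}, and your deductions from clauses (a)--(c) of that fact (plus the duality identities $\bfrak(\Rbf^\perp)=\dfrak(\Rbf)$, $\dfrak(\Rbf^\perp)=\bfrak(\Rbf)$) are precisely what is implicit there.
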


% \MC{Podemos mirar donde ubicar lo siguiente de manera apropiada.}
% \DM{Lo moví aquí}

\begin{example}\label{Charb-d}
Denote by $\Ior$ the set of partitions of $\omega$ into finite non-empty intervals. Define the following relations on $\Ior$:
%relational systems $\Dbf_1:=\la\Ior,\Ior,\sqsubseteq\ra$ and $\Dbf_2:=\la\Ior,\Ior,\ntriangleright\ra$ where, for any $I,J\in\Ior$,
   \[ I \sqsubseteq J \text{ iff } \forall^\infty\, n<\omega\ \exists\, m<\omega\colon I_m\subseteq J_n; \quad
      I \triangleright J \text{ iff } \exists^\infty\, n<\omega\ \exists m<\omega\colon  I_n\supseteq J_m.
   \]
Note that $\sqsubseteq$ is a directed preorder on $\Ior$, so we think of $\Ior$ as the relational system with the relation $\sqsubseteq$. Denote $\Dbf_2:=\la\Ior,\Ior,\ntriangleright\ra$.
   
   For each $I\in\Ior$ we define $f_I\colon \omega\to\omega$ and $I^{*2}\in\Ior$ such that $f_I(n):=\min I_{n}$ and $I^{*2}_n:=I_{2n}\cup I_{2n+1}$. For each increasing $f\in\omega^\omega$ define the increasing function $f^*\colon \omega\to\omega$ such that $f^*(0)=0$ and $f^*(n+1)=f(f^*(n)+1)$, and define $I^f\in\Ior$ such that $I_n^f:=[f^*(n),f^*(n+1))$.
   
   In Blass~\cite{blass} it is proved that $\Ior \eqT\Dbf_2\eqT \omega^\omega$. %where $\Dbf:=\la\omega^\omega,\omega^\omega,\leq^*\ra$. 
   More concretely, for any increasing function $g\in\omega^\omega$ and $I,J\in\Ior$: %defined by $I_n:=[f(n), f(n+1))$:
   \begin{enumerate}[label=\rm(\alph*)]
       \item $f_I\leq^* g$ implies $I\sqsubseteq I^g$ (so $\Ior\leqT \omega^\omega$). 
       
       \item $g\not\leq^* f_I$ implies $I^g \triangleright  I$ (so $\omega^\omega\leqT \Dbf_2$).

       \item $I^{*2}\sqsubseteq J$ implies $I\ntriangleright J$ (so $\Dbf_2\leqT \Ior$).
   \end{enumerate}
\end{example}

To finish this section, we review the composition of relational systems.

\begin{definition}\label{compTK}
The \emph{composition of two relational systems $\Rbf_e=\la X_e,Y_e,\sqsubset_e\ra$}, for $e\in\{0,1\}$, is defined by $(\Rbf_0;\Rbf_1):=\la X_0\times X_1^{Y_0}, Y_0\times Y_1, \sqsubset_* \ra$ where
\[(x,f)\sqsubset_*(y,b)\text{ iff }x \sqsubset_0 y\text{ and }f(y) \sqsubset_1 b.\]
\end{definition}

\begin{fct}\label{ex:compleqT}
Let $\Rbf_i$ be a relational system for $i<3$. If $\Rbf_0\leqT\Rbf_1$, then $(\Rbf_0;\Rbf_2)\leqT(\Rbf_1;\Rbf_2)$ and $(\Rbf_2;\Rbf_0)\leqT(\Rbf_2;\Rbf_1)$.
\end{fct}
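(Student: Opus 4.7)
The plan is to construct explicit Tukey connections in both directions, using the given Tukey connection $(\Psi_-,\Psi_+)\colon \Rbf_0\to\Rbf_1$, where $\Psi_-\colon X_0\to X_1$ and $\Psi_+\colon Y_1\to Y_0$.

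For the first claim $(\Rbf_0;\Rbf_2)\leqT(\Rbf_1;\Rbf_2)$, I would define $\Phi_-\colon X_0\times X_2^{Y_0}\to X_1\times X_2^{Y_1}$ by $\Phi_-(x,f):=(\Psi_-(x),\, f\circ\Psi_+)$ and $\Phi_+\colon Y_1\times Y_2\to Y_0\times Y_2$ by $\Phi_+(y',b):=(\Psi_+(y'),b)$. Then, assuming $\Phi_-(x,f)\sqsubset_{*'}(y',b)$ in $(\Rbf_1;\Rbf_2)$, by definition this means $\Psi_-(x)\sqsubset_1 y'$ and $(f\circ\Psi_+)(y')\sqsubset_2 b$. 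The Tukey connection gives $x\sqsubset_0\Psi_+(y')$, and the second conjunct is exactly $f(\Psi_+(y'))\sqsubset_2 b$; together these say $(x,f)\sqsubset_*(\Psi_+(y'),b)=\Phi_+(y',b)$, as required.

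For the second claim $(\Rbf_2;\Rbf_0)\leqT(\Rbf_2;\Rbf_1)$, the situation is dual in the sense that $\Rbf_0$ now appears as the ``second coordinate''; the appropriate moves are to send the $X_0$-valued function through $\Psi_-$ and the $Y_1$-coordinate back through $\Psi_+$. Concretely, I would set $\Phi_-(x,f):=(x,\,\Psi_-\circ f)$ and $\Phi_+(y,b'):=(y,\Psi_+(b'))$, and then verify that $\Phi_-(x,f)\sqsubset_{*'}(y,b')$ means $x\sqsubset_2 y$ together with $\Psi_-(f(y))\sqsubset_1 b'$, whence $f(y)\sqsubset_0\Psi_+(b')$ by the Tukey connection, giving $(x,f)\sqsubset_*(y,\Psi_+(b'))=\Phi_+(y,b')$.

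There is no real obstacle here: the fact is essentially a bookkeeping exercise, and the only thing to watch is that the variance of $\Psi_+$ (running from $Y_1$ to $Y_0$) is compatible with the fact that, in the composition, the ``function coordinate'' in $X_2^{Y_0}$ takes inputs from $Y_0$, so precomposing with $\Psi_+$ (for the first claim) lands a function in $X_2^{Y_1}$ as needed. Once the two maps are written down correctly, both verifications reduce to a direct unfolding of the definition of $\sqsubset_*$ and a single application of the Tukey condition $\Psi_-(x)\sqsubset_1 y'\Rightarrow x\sqsubset_0\Psi_+(y')$.
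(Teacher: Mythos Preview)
Your argument is correct: the explicit Tukey connections you write down are the natural ones, and both verifications go through exactly as you indicate. The paper states this result as a \emph{Fact} without proof, so there is nothing to compare against; your construction is precisely the routine unfolding one would expect.
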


The following theorem describes the effect of the composition on cardinal characteristics.

\begin{theorem}[{\cite[Thm.~4.10]{blass}}]\label{blascomp}
Let $\Rbf_e$ be a relational system for $e\in\{0,1\}$. Then $\dfrak(\Rbf_0;\Rbf_1)=\dfrak(\Rbf_0)\cdot\dfrak(\Rbf_1)$ and $\bfrak(\Rbf_0;\Rbf_1)=\min\{\bfrak(\Rbf_0),\bfrak(\Rbf_1)\}$. 
\end{theorem}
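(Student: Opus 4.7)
The plan is to prove both equalities by establishing the corresponding pair of inequalities. All four arguments are direct constructions that exploit the explicit shape of the composition $(\Rbf_0;\Rbf_1)$ from \autoref{compTK}.

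For the dominating number, the inequality $\dfrak(\Rbf_0;\Rbf_1) \leq \dfrak(\Rbf_0) \cdot \dfrak(\Rbf_1)$ is immediate from a product construction: if $D_0 \subseteq Y_0$ is $\Rbf_0$-dominating and $D_1 \subseteq Y_1$ is $\Rbf_1$-dominating, then for any $(x,f) \in X_0 \times X_1^{Y_0}$ I pick $y \in D_0$ with $x \sqsubset_0 y$ and then $b \in D_1$ with $f(y) \sqsubset_1 b$, showing that $D_0 \times D_1$ is $(\Rbf_0;\Rbf_1)$-dominating. For the converse, given an $(\Rbf_0;\Rbf_1)$-dominating set $D \subseteq Y_0 \times Y_1$, I would check that the two projections are dominating in their respective systems: fixing any $f_0 \in X_1^{Y_0}$, the $\Rbf_0$-witness to $(x,f_0)$ lies in the first projection, so it dominates $X_0$; and given $x_1 \in X_1$, feeding the pair $(x_0, c_{x_1})$ into $D$, where $c_{x_1}$ is the constant function with value $x_1$, shows that the second projection dominates $x_1$.

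For the unbounding number, the inequality $\bfrak(\Rbf_0;\Rbf_1) \leq \min\{\bfrak(\Rbf_0),\bfrak(\Rbf_1)\}$ will be shown by transporting unbounded witnesses into the composition. From any $\Rbf_0$-unbounded $E_0$ I form $E_0 \times \{f_0\}$ for a fixed $f_0$, noting that any $(\Rbf_0;\Rbf_1)$-bound would yield an $\Rbf_0$-bound for $E_0$; from any $\Rbf_1$-unbounded $E_1$ I form $\{(x_0, c_x) : x \in E_1\}$ where $c_x$ is the constant function with value $x$, and a bound $(y,b)$ would give $x \sqsubset_1 b$ for every $x \in E_1$. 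For the reverse inequality, given $E \subseteq X_0 \times X_1^{Y_0}$ with $|E| < \min\{\bfrak(\Rbf_0),\bfrak(\Rbf_1)\}$, the first projection $\pi_0[E]$ has size $<\bfrak(\Rbf_0)$ and is therefore bounded by some $y \in Y_0$; then the set $\{f(y) : (x,f) \in E\} \subseteq X_1$ has size $\leq |E| < \bfrak(\Rbf_1)$ and is bounded by some $b \in Y_1$; the pair $(y,b)$ then bounds $E$.

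The main obstacle is the recurring use of constant functions $c_x \in X_1^{Y_0}$ in two of the four arguments: this is the trick that makes the $\Rbf_1$-coordinate accessible through the opaque function component, and once it is spotted the rest reduces to bookkeeping. A couple of caveats deserve attention in the writeup: both $X_e$ and $Y_e$ are assumed non-empty so every fixed choice above is available; and in collapsing $\dfrak(\Rbf_0) \cdot \dfrak(\Rbf_1)$ to $\max\{\dfrak(\Rbf_0),\dfrak(\Rbf_1)\}$ for the lower bound, one tacitly uses that these cardinals are infinite, which is the standard case here since the systems of interest have no maximum.
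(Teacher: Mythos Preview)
The paper does not give its own proof of this theorem; it is quoted from Blass~\cite{blass} and used as a black box. Your argument is correct and is precisely the standard one: product of dominating families for the upper bound on $\dfrak$, projections (with the constant-function trick for the $\Rbf_1$-coordinate) for the lower bound, and the two-step bounding argument for $\bfrak$. Your caveat about needing the cardinals to be infinite so that $\dfrak(\Rbf_0)\cdot\dfrak(\Rbf_1)=\max\{\dfrak(\Rbf_0),\dfrak(\Rbf_1)\}$ is also the right observation, and is implicit in the cardinal-characteristics context of the paper.
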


\section{Localization and anti-localization}\label{sec:Lc}

The contents of this section are based on~\cite[Sec.~3.1]{CM}.
The localization and anti-localization cardinals are also defined via relational systems.

\begin{definition}\label{def:LcaLc}
We fix a sequence $b = \seq{b(n)}{n\in \omega}$ of non-empty sets and $h\colon \omega\to\omega$.
    \begin{enumerate}[label=(\arabic*)]
        \item Denote $\prod b := \prod_{n\in \omega}b(n)$ and $\Scal(b,h) := \prod_{n\in \omega} [b(n)]^{\leq h(n)}$. The functions in the second set are usually called \emph{slaloms}.

        \item When $x$ and $\varphi$ are functions with domain $\omega$, we write
        \begin{enumerate}[label=(\roman*)]
            \item $x\in^*\varphi$ iff the set $\set{n\in \omega}{x(n)\notin \varphi(n)}$ is finite, which is read as ``\emph{$\varphi$ localizes $x$}'';
            \item $x\in^\infty\varphi$ iff the set $\set{n\in \omega}{x(n)\in \varphi(n)}$ is finite. The expression $x\notin^\infty \varphi$ is read as ``\emph{$\varphi$ anti-localizes $x$}''.
        \end{enumerate}
        %\item When $x,y\in\omega^\omega$, $x\leq^* y$ means $\forall^\infty\, n\colon x(n)\leq y(n)$.
        
        \item Let $\Lc(b,h) := \la\prod b, \Scal(b,h),\in^*\ra$, which is a relational system. Denote $\blc_{b,h}=\bfrak(\Lc(b,h))$ and $\dlc_{b,h}=\dfrak(\Lc(b,h))$, which we call \emph{localization cardinals}.\footnote{In~\cite{KM,CKM}, these are denoted by $\vfa_{b,h}$ and $\cfa_{b,h}$, respectively.}
        
        \item Let $\aLc(b,h) := \la\Scal(b,h),\prod b,\not\ni^\infty\ra$, which is a relational system. Denote $\balc_{b,h}=\bfrak(\aLc(b,h))$ and $\dalc_{b,h}=\dfrak(\aLc(b,h))$, which we call \emph{anti-localization cardinals}.\footnote{In~\cite{KM,CKM}, these are denoted by $\cxt_{b,h}$ and $\vxt_{b,h}$, respectively.}
    \end{enumerate}

    When a single set $a$ (like an ordinal) is used in the place of $b$ and $h$, we are referring to the constant sequence with value $a$. For example, when writing $\Lc(\omega,h)$ and $\aLc(b,1)$, $\omega$ refers to the constant sequence with value $\omega$, and $1$ to the constant sequence with value $1$. For any set $A$, $\id_{A}$ denotes the \textit{identity function on $A$}.

    It is also practical to allow that the domain of $b$ and $h$ is an arbitrary infinite countable set $D$ instead of just $\omega$, even if studying $D=\omega$ is enough to understand localization and antilocalization. For example, when $D\subseteq \omega$ is infinite, we can talk about $\Lc(b\frestr D,h\frestr D)$ and $\aLc(b\frestr D,h\frestr D)$, and their associated cardinal characteristics.
\end{definition}

The following results show natural Tukey connections between localization and anti-localization, and that for $b$
only the cardinality of $b(n)$ is relevant. Its proof is straightforward and therefore omitted.  

\begin{fct}\label{ex:easylc}
$\Lc(b,h)\leqT\Lc(b',h')$ and $\aLc(b',h')\leqT\aLc(b,h)$ whenever $|b(n)|\leq |b'(n)|$ and $h'(n)\leq h(n)$ for all but finitely many $n$. In particular, we have 
    \begin{enumerate}[label=\normalfont(\alph*)]
        \item $\blc_{b',h'}\leq\blc_{b,h}$ and $\dlc_{b,h}\leq\dlc_{b',h'}$,
        \item $\balc_{b,h}\leq\balc_{b',h'}$ and $\dalc_{b',h'}\leq \dalc_{b,h}$.
    \end{enumerate}
\end{fct}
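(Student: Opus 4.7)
The plan is to build both Tukey connections from a single piece of auxiliary data: a sequence of injections $\iota_n\colon b(n)\hookrightarrow b'(n)$.

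First I would fix $N_0<\omega$ such that $|b(n)|\leq|b'(n)|$ and $h'(n)\leq h(n)$ for every $n\geq N_0$, and for each such $n$ choose an injection $\iota_n\colon b(n)\hookrightarrow b'(n)$. For $n<N_0$ I would pick arbitrary default values (any $y_n\in b'(n)$, and $\emptyset\subseteq b(n)$), keeping in mind that modifications on the finite initial segment $n<N_0$ do not affect either of the relations $\in^*$ or $\not\ni^\infty$. This reduction is the only real bookkeeping step and turns the hypothesis ``for all but finitely many $n$'' into ``for all $n\geq N_0$'' for the rest of the argument.

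For $\Lc(b,h)\leqT\Lc(b',h')$, the natural choice is $\Psi_-\colon\prod b\to\prod b'$ with $\Psi_-(x)(n)=\iota_n(x(n))$ and $\Psi_+\colon\Scal(b',h')\to\Scal(b,h)$ with $\Psi_+(\varphi')(n)=\iota_n^{-1}[\varphi'(n)]$ (for $n\geq N_0$, defaults otherwise). Because $\iota_n$ is an injection, $|\Psi_+(\varphi')(n)|\leq|\varphi'(n)|\leq h'(n)\leq h(n)$, so $\Psi_+(\varphi')$ is genuinely in $\Scal(b,h)$. The key observation is the pointwise equivalence $\Psi_-(x)(n)\in\varphi'(n)\iff x(n)\in\Psi_+(\varphi')(n)$, which instantly gives $\Psi_-(x)\in^*\varphi'\Rightarrow x\in^*\Psi_+(\varphi')$. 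For $\aLc(b',h')\leqT\aLc(b,h)$ I would reuse the same formulas, now reading $\Psi_-\colon\Scal(b',h')\to\Scal(b,h)$ and $\Psi_+\colon\prod b\to\prod b'$. The same pointwise equivalence shows that the sets $\{n:x(n)\in\Psi_-(\varphi')(n)\}$ and $\{n:\Psi_+(x)(n)\in\varphi'(n)\}$ coincide up to a finite difference, so one is infinite iff the other is, which yields $\Psi_-(\varphi')\not\ni^\infty x\Rightarrow\varphi'\not\ni^\infty\Psi_+(x)$.

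There is no genuine obstacle here; the content is just the tautology that transporting via an injection and pulling back slaloms by its inverse preserves the two ``localization'' relations on the nose. The only point that even needs a glance is the slalom size bound, which is precisely what forces one to map functions forward by $\iota_n$ and slaloms backward by $\iota_n^{-1}$ rather than the other way round, and which explains why the direction of $\leqT$ is reversed between $\Lc$ and $\aLc$. The cardinal inequalities (a) and (b) then follow immediately from \autoref{cor:Tukeyval}(b) applied to each of the two Tukey connections.
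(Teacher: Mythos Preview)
Your proposal is correct. The paper itself omits the proof entirely, stating only that it ``is straightforward and therefore omitted''; the argument you give via coordinatewise injections $\iota_n$ and their preimages is exactly the natural one and would be the intended proof.
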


\begin{fct}\label{trv:LcaLc}
    $\aLc(b,h)^\perp\leqT\Lc(b,h)$, so $\balc_{b,h}\leq\dlc_{b,h}$ and $\blc_{b,h}\leq\dalc_{b,h}$.
\end{fct}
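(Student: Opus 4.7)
The plan is to exhibit an explicit Tukey connection $(\Psi_-,\Psi_+)\colon\aLc(b,h)^\perp\to\Lc(b,h)$ and then invoke \autoref{cor:Tukeyval}. Since $\aLc(b,h)^\perp$ and $\Lc(b,h)$ share the same underlying sets $\prod b$ and $\Scal(b,h)$, I will simply take $\Psi_-=\id_{\prod b}$ and $\Psi_+=\id_{\Scal(b,h)}$. There is essentially no obstacle here: the entire content of the claim is that the relation witnessing $\Lc(b,h)$ is contained in the relation witnessing $\aLc(b,h)^\perp$.

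The verification proceeds by unpacking the dual. By definition $\aLc(b,h)^\perp=\la\prod b,\Scal(b,h),\sqsubset\ra$ where $x\sqsubset \varphi$ iff $\varphi\mathrel{\not(\not\ni^\infty)} x$, i.e.\ the set $\{n<\omega:x(n)\in\varphi(n)\}$ is infinite, which is precisely $x\in^\infty\varphi$. Thus the Tukey-connection condition reduces to the implication
\[x\in^*\varphi \ \Longrightarrow\ x\in^\infty\varphi,\]
for arbitrary $x\in\prod b$ and $\varphi\in\Scal(b,h)$. This is immediate because any cofinite subset of $\omega$ is infinite.

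Having established $\aLc(b,h)^\perp\leqT\Lc(b,h)$, \autoref{cor:Tukeyval}(b) gives $\bfrak(\Lc(b,h))\leq\bfrak(\aLc(b,h)^\perp)=\dfrak(\aLc(b,h))$ and $\dfrak(\aLc(b,h)^\perp)=\bfrak(\aLc(b,h))\leq\dfrak(\Lc(b,h))$, i.e.\ $\blc_{b,h}\leq\dalc_{b,h}$ and $\balc_{b,h}\leq\dlc_{b,h}$, as required.
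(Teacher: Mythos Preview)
Your proof is correct and is precisely the straightforward argument one would expect; the paper states this as a \emph{Fact} without proof, so there is nothing to compare against. One minor remark: the paper's \autoref{def:LcaLc}(2)(ii) contains an apparent typo (``finite'' should read ``infinite'' for $\in^\infty$), and you have correctly used the intended meaning, as is confirmed by the paper's own later usage (e.g.\ in the proof of \autoref{Tukalcov}).
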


In combinatorics of the reals, the study of localization and anti-localization cardinals are interesting when $0<h(n)<|b(n)|\leq \aleph_0$ for all $n<\omega$. Trivial cases are summarized as below.

%\begin{example}\label{Exbasc}
%Fix $b$ and $h$ as in~\autoref{def:LcaLc}. %and $h\in\omega^\omega$ such that $h\geq1$. 
\begin{enumerate}[label=\normalfont(T\arabic*)]
    \item\label{lctriv} If $|b(n)|\leq h(n)$ for all but finitely many $n<\omega$, $\dlc_{b,h}=1$ and $\blc_{b,h}$ is undefined. On the other hand, if $\exists^\infty\, n<\omega\colon h(n)=0$ then $\blc_{b,h}=1$ and $\dlc_{b,h}$ is undefined. Therefore, both localization cardinals for $b,h$ are defined only when $1\leq^*h$ and $\exists^\infty{n<\omega}:h(n)<|b(n)|$. 
    Even more, if $A:=\set{n<\omega}{1\leq h(n)<|b(n)|}$ (which is infinite) %and $\iota_A:\omega\to A$ is the increasing enumeration of $A$, 
    then $\Lc(b,h)\eqT\Lc(b\frestr A,h\frestr A)$, so $\blc_{b,h}=\blc_{b\frestr A,h\frestr A}$ and $\dlc_{b,h}=\dlc_{b\frestr A,h\frestr A}$.
    
    \item If $\exists^\infty\, i<\omega\colon |b(n)|\leq h(n)$ then $\balc_{b,h}=1$ and $\dalc_{b,h}$ is undefined. On the other hand, if $\forall^\infty\, n<\omega\colon h(n)=0$ then $\dalc_{b,h}=1$ and $\balc_{b,h}$ is undefined. Hence, both anti-localization cardinals for $b,h$ are defined iff $h(n)<|b(n)|$ for all but finitely many $n<\omega$, and $\exists^\infty\, n<\omega\colon h(n)\geq 1$, even more, if $A$ is as in~\ref{lctriv} then $\aLc(b,h)\eqT\aLc(b\frestr A,h\frestr A)$, so $\balc_{b,h}=\balc_{b\frestr A,h\frestr A}$ and $\dalc_{b,h}=\dalc_{b\frestr A,h\frestr A}$.
\end{enumerate}
%\end{example}

Restricting to infinite sets yields the following general fact.

\begin{fct}\label{compinj}
Let $b$ and $h$ as in \autoref{def:LcaLc}.
If %$\iota:\omega\to\omega$ is a one-to-one function,
$A\subseteq \omega$ is infinite, then $\Lc(b\frestr A,h\frestr A)\leqT\Lc(b,h)$ and $\aLc(b\frestr A,h\frestr A)\leqT\aLc(b,h)$. Hence $\blc_{b,h}\leq\blc_{b\frestr A,h\frestr A}$ and $\dlc_{b\frestr A,h\frestr A}\leq\dlc_{b,h}$, likewise for the anti-localization cardinals.

Even more, if $\omega\smallsetminus A$ is finite then $\Lc(b\frestr A,h\frestr A)\eqT\Lc(b,h)$ and $\aLc(b\frestr A,h\frestr A)\eqT\aLc(b,h)$.
\end{fct}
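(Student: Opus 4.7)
The plan is to exhibit the Tukey connections directly by a simple ``pad and restrict'' construction and then quote \autoref{cor:Tukeyval}. The underlying point is that both $\in^{*}$ and $\not\ni^{\infty}$ only depend on behaviour modulo finite modification, so padding a function on a finite or otherwise harmless set of indices does not disturb the relation.

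For $\Lc(b\frestr A,h\frestr A)\leqT\Lc(b,h)$, fix $x_{0}\in\prod b$ (nonempty since each $b(n)\neq\vacio$). Define $\Psi_{-}\colon\prod(b\frestr A)\to\prod b$ by $\Psi_{-}(x)(n):=x(n)$ if $n\in A$ and $\Psi_{-}(x)(n):=x_{0}(n)$ otherwise, and put $\Psi_{+}(\varphi):=\varphi\frestr A$. Then $\Psi_{+}(\varphi)\in\Scal(b\frestr A,h\frestr A)$, and if $\Psi_{-}(x)\in^{*}\varphi$ the finite exceptional set in $\omega$ restricts to a finite exceptional set inside $A$, on which $\Psi_{-}(x)$ agrees with $x$; hence $x\in^{*}\Psi_{+}(\varphi)$. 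For $\aLc(b\frestr A,h\frestr A)\leqT\aLc(b,h)$ the construction is dual: send each slalom $\varphi\in\Scal(b\frestr A,h\frestr A)$ to its extension by $\vacio\in [b(n)]^{\leq h(n)}$ off $A$, and send each $x\in\prod b$ to $x\frestr A$. Indices $n\notin A$ contribute nothing to ``$\in^{\infty}$'' against the $\vacio$-extended slalom, so the required implication transfers directly from the extended system back to the restricted one.

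For the ``even more'' assertion ($\omega\smallsetminus A$ finite) I would produce Tukey connections in the reverse direction by the same recipe, this time exploiting that $\omega\smallsetminus A$ is finite. For $\Lc(b,h)\leqT\Lc(b\frestr A,h\frestr A)$, send $x$ to $x\frestr A$ and extend $\psi\in\Scal(b\frestr A,h\frestr A)$ by $\vacio$ on the finite complement; if the exceptional set of $x\frestr A\in^{*}\psi$ inside $A$ is finite, adjoining the finite set $\omega\smallsetminus A$ (where the extended slalom is $\vacio$) still yields a finite exceptional set in $\omega$. The analogous construction for $\aLc(b,h)\leqT\aLc(b\frestr A,h\frestr A)$ swaps the roles of the two sides and pads the $\prod b$-coordinate by a fixed $y_{0}\in\prod b$ off $A$; again only finitely many new indices are introduced, so ``$\in^{\infty}$'' is unaffected. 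I do not expect a substantive obstacle: the only checks are that the maps land in the correct codomains, which follow from $b(n)\neq\vacio$ and $\vacio\in [b(n)]^{\leq h(n)}$. The stated cardinal inequalities and the Tukey equivalences in the cofinite case then follow immediately from \autoref{cor:Tukeyval}.
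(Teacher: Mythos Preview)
Your argument is correct. The paper states this result as a \emph{Fact} without proof, treating it as routine; the pad-and-restrict maps you describe are precisely the natural witnesses one would supply if asked, and all verifications go through as you indicate. There is nothing to compare against, and no gap to report.
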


The case when some (or all) $b(n)$ are uncountable is somewhat interesting. We do not develop this case, but cite the result below.

\begin{theorem}[{\cite[Thm.~3.4]{CM}}]\label{CM:unct}
    \
    \begin{enumerate}[label = \normalfont (\alph*)]
     \item If $b(i)$ is infinite for all but finitely many $i<\omega$ then
      \[\balc_{b,h}=\max\{\cof([\kappa]^{\aleph_0}),\balc_{\omega,h}\}\text{\ and }\dalc_{b,h}=\min\{\add([\kappa]^{\aleph_0}),\dalc_{\omega,h}\}\]
      where $\kappa=\liminf_{i<\omega} |b(i)|$. In particular, if $\kappa=\omega$ then $\balc_{b,h}=\balc_{\omega,h}$ and $\dalc_{b,h}=\dalc_{\omega,h}$; otherwise, if $\kappa$ is uncountable then $\dalc_{b,h}=\aleph_1$.
     \item If $b(i)$ is infinite for infinitely many $i<\omega$ then
      \[\blc_{b,h}=\min\{\add([\lambda]^{\aleph_0}),\blc_{\omega,h}\}\text{\ and }\dlc_{b,h}=\max\{\cof([\lambda]^{\aleph_0}),\dlc_{\omega,h}\}\]
      where $\lambda=\limsup_{i<\omega} |b(i)|$. In particular, if $\lambda=\omega$ then $\blc_{b,h}=\blc_{\omega,h}$ and $\dlc_{b,h}=\dlc_{\omega,h}$; if $\lambda$ is uncountable and $h$ goes to infinity then $\blc_{b,h}=\aleph_1$.
    \end{enumerate}
\end{theorem}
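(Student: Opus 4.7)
My plan is to prove part~(a) by reducing, via \autoref{compinj}, to the case where $b(n)=\kappa$ for every $n$, and handle part~(b) by the mirror argument for $\Lc$ with $\lambda=\limsup_n|b(n)|$ in place of $\kappa$. The easy bounds $\balc_{\omega,h}\leq\balc_{b,h}$ and $\dalc_{b,h}\leq\dalc_{\omega,h}$ are direct consequences of \autoref{ex:easylc} since $|b(n)|\geq\aleph_0$. The workhorse is a canonical decomposition: each slalom $\varphi\in\Scal(\kappa,h)$ factors as $(A_\varphi,\hat\varphi)$ with $A_\varphi:=\bigcup_{n<\omega}\varphi(n)\in[\kappa]^{\aleph_0}$ and $\hat\varphi:=\pi_{A_\varphi}^{-1}\circ\varphi\in\Scal(\omega,h)$, using a preselected enumeration $\pi_C\colon\omega\to C$ for each $C\in[\kappa]^{\aleph_0}$; analogously each $y\in\kappa^\omega$ factors via $\ran y$.

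For the product-type inequalities, I would fix a cofinal $\Cwf\subseteq[\kappa]^{\aleph_0}$ of size $\cof([\kappa]^{\aleph_0})$ and an $\aLc(\omega,h)$-unbounded $F_0\subseteq\Scal(\omega,h)$ of size $\balc_{\omega,h}$, and set $F:=\{\varphi_{C,\psi}:C\in\Cwf,\,\psi\in F_0\}$ where $\varphi_{C,\psi}(n):=\pi_C[\psi(n)]$. Any $y\in\kappa^\omega$ has $\ran y\subseteq C$ for some $C\in\Cwf$, and unboundedness of $F_0$ supplies $\psi$ with $\pi_C^{-1}(y(n))\in\psi(n)$ for infinitely many $n$, hence $y(n)\in\varphi_{C,\psi}(n)$ for the same $n$; so $F$ is anti-localization-unbounded of size at most $\max\{\cof([\kappa]^{\aleph_0}),\balc_{\omega,h}\}$. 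Dually, if $D\subseteq\kappa^\omega$ satisfies $|D|<\add([\kappa]^{\aleph_0})$, the family $\{\ran y:y\in D\}$ is bounded above in $[\kappa]^{\aleph_0}$ by some $C$, so $D\subseteq C^\omega$; if moreover $|D|<\dalc_{\omega,h}$, then $D$ fails to $\aLc(C,h)$-dominate, and the witnessing slalom, pushed forward into $\Scal(\kappa,h)$, shows $\dalc_{b,h}\geq\min\{\add([\kappa]^{\aleph_0}),\dalc_{\omega,h}\}$.

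The remaining asymmetric inequalities use the uncountability of $\kappa$ (the case $\kappa=\omega$ being trivial and yielding the first ``in particular'' assertion). For $\dalc_{b,h}\leq\add([\kappa]^{\aleph_0})=\aleph_1$, I take the almost-constant family $\{y_c:c\in S\}$ where $S\in[\kappa]^{\aleph_1}$ is fixed, $\iota_n\colon\kappa\hookrightarrow b(n)$ are fixed injections, and $y_c(n):=\iota_n(c)$; any slalom $\varphi\in\Scal(b,h)$ has countable pullback $\bigcup_n\iota_n^{-1}(\varphi(n))\subseteq\kappa$, which must miss some $c\in S$, and this $y_c$ anti-localizes $\varphi$, yielding the second ``in particular'' statement. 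The hard part will be the dual $\balc_{b,h}\geq\cof([\kappa]^{\aleph_0})$: given $F\subseteq\Scal(b,h)$ with $|F|=\lambda<\cof$, the family $\{A_\varphi:\varphi\in F\}$ is not cofinal in $[\kappa]^{\aleph_0}$, and one constructs a single $y\in\kappa^\omega$ anti-localizing every $\varphi\in F$ by partitioning $\lambda$ into $\omega$ pieces whose initial unions have size $<\kappa$ (feasible in the standard regime $\cof\in\{\kappa,\kappa^+\}$) and choosing $y(n)\in b(n)\setminus\bigcup_{N_\alpha\leq n}\varphi_\alpha(n)$ recursively; when $\cof([\kappa]^{\aleph_0})$ exceeds $\kappa^+$ the naive partition fails and the non-cofinality witness $C\in[\kappa]^{\aleph_0}$ must be woven into the recursion to guarantee enough room at each stage.
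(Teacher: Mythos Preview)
The paper does not contain a proof of this theorem; it is stated with the preamble ``We do not develop this case, but cite the result below'' and attributed to~\cite{CM}. So there is nothing in the paper to compare your proposal against.

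That said, your proposal has two genuine gaps. First, the opening move---``reducing, via \autoref{compinj}, to the case where $b(n)=\kappa$ for every $n$''---is not justified. \autoref{compinj} only lets you restrict to an infinite set of indices, and $\liminf_i|b(i)|=\kappa$ does not provide an infinite $A$ with $|b(n)|=\kappa$ for $n\in A$: take $|b(i)|=\aleph_{i+1}$, where $\kappa=\aleph_\omega$ but no $|b(i)|$ equals or exceeds $\kappa$. Monotonicity (\autoref{ex:easylc}) gives one inequality when $|b(n)|\geq\kappa$ eventually, but the general case needs more care, and the analogous issue recurs in part~(b) with $\limsup$.

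Second---and you flag this yourself---the argument for $\balc_{b,h}\geq\cof([\kappa]^{\aleph_0})$ is incomplete. The strategy ``partition $\lambda$ into $\omega$ pieces whose initial unions have size ${<}\kappa$'' fails as soon as $|F|\geq\kappa$ with $\cf(\kappa)>\omega$, and also whenever $|F|>\kappa$ (both cases arise, e.g.\ for $\kappa=\aleph_\omega$ where $\cof([\kappa]^{\aleph_0})>\kappa$). Your proposed repair, that ``the non-cofinality witness $C$ must be woven into the recursion,'' is not an argument: the fact that $C\not\subseteq A_\varphi$ for each $\varphi\in F$ only gives one missing point per $\varphi$, and a $y$ ranging inside $C$ may still satisfy $y(n)\in\varphi(n)$ infinitely often, since $C\cap A_\varphi$ can be cofinite in $C$. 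This inequality (and its mirror $\dlc_{b,h}\geq\cof([\lambda]^{\aleph_0})$ in part~(b)) requires a genuinely different idea; you should consult the proof in~\cite{CM}.
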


In addition, if $F:=\set{i<\omega}{|b(i)|<\aleph_0}$ is infinite, then $\balc_{b,h}=\balc_{b\frestr F,h\frestr F}$ and $\dalc_{b,h}=\dalc_{b\frestr F,h\frestr F}$, see~\cite[Lem.~3.6]{CM}.

From now on, when dealing with localization and anti-localization, we assume that $0<h(n)<|b(n)|\leq\aleph_0$ for all (but finitely many) $n<\omega$. According to \autoref{CM:unct} (and the comment that follows), the study of localization and anti-localization cardinals reduces to the cases $b\in\omega^\omega$ and $b=\omega$ (but we are not going to make this distinction).

Both localization and anti-localization relational systems have connections with the following relational system of \emph{eventually different functions}.

\begin{definition}\label{def:Ed}
  Let $b:=\la b(n):\, n<\omega\ra$ be a sequence of non-empty sets. Define the relational system $\Ed_b:=\la\prod b,\prod b,\neq^\infty\ra$ where $x=^\infty y$ means $x(n)=y(n)$ for infinitely many $n$. The relation $x\neq^\infty y$ expresses that \emph{$x$ and $y$ are eventually different}. We just write $\Ed:=\Ed_\omega$ (when $b$ is the constant function $\omega$).
\end{definition}

\begin{fct}\label{aLc1}
    $\aLc(b,1)\eqT \Ed_b$, so $\bfrak(\Ed_b)=\balc_{b,1}$ and $\dfrak(\Ed_b)=\dalc_{b,1}$.

    Even more, if $b\in\omega^\omega$ then $\Lc(b,b-1)\eqT \aLc(b,1)$, so $\blc_{b,b-1}=\balc_{b,1}$ and $\dlc_{b,b-1}=\dalc_{b,1}$.
\end{fct}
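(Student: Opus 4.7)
The proof splits naturally into two Tukey equivalences. I will build both by exhibiting explicit Tukey connections; since slaloms in $\Scal(b,1)$ and elements of $\prod b$ are essentially the same data (up to specifying a value where the slalom is empty), the connections are almost tautological.

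For the first claim, $\aLc(b,1)\eqT\Ed_b$, I would begin with $\Ed_b\leqT\aLc(b,1)$, which is the cleaner direction. Define $\Psi_-\colon \prod b\to \Scal(b,1)$ by $\Psi_-(x)(n):=\{x(n)\}$ and $\Psi_+\colon \prod b\to \prod b$ to be the identity. Then $\Psi_-(x)\not\ni^\infty y$ says exactly that $\{n<\omega: y(n)\in\{x(n)\}\}=\{n<\omega: x(n)=y(n)\}$ is finite, i.e.\ $x\neq^\infty \Psi_+(y)$. For the reverse direction $\aLc(b,1)\leqT \Ed_b$, fix an element $e(n)\in b(n)$ for each $n$ (possible since $b(n)\neq\vacio$) and define $\Psi_-\colon \Scal(b,1)\to\prod b$ by $\Psi_-(\varphi)(n)=z$ if $\varphi(n)=\{z\}$, and $\Psi_-(\varphi)(n)=e(n)$ otherwise; set $\Psi_+\colon\prod b\to\prod b$ to be the identity. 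Then $\Psi_-(\varphi)\neq^\infty y$ gives $\{n: \Psi_-(\varphi)(n)=y(n)\}$ finite, and since $y(n)\in\varphi(n)$ forces $\varphi(n)=\{y(n)\}=\{\Psi_-(\varphi)(n)\}$, the set $\{n: y(n)\in\varphi(n)\}$ is finite as well, i.e.\ $\varphi\not\ni^\infty \Psi_+(y)$.

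For the second claim (assuming $b\in\baire$), $\Lc(b,b-1)\eqT\aLc(b,1)$, the key observation is the complementation duality: a slalom $\psi\in\Scal(b,b-1)$ misses at least one point of $b(n)$ at each $n$, while a slalom in $\Scal(b,1)$ picks at most one. For $\Lc(b,b-1)\leqT\aLc(b,1)$, set $\Psi_-(x)(n):=\{x(n)\}\in\Scal(b,1)$ and $\Psi_+(y)(n):=b(n)\menos\{y(n)\}\in\Scal(b,b-1)$. Then $\Psi_-(x)\not\ni^\infty y$ says $\{n: y(n)=x(n)\}$ is finite, which is exactly $\{n: x(n)\notin \Psi_+(y)(n)\}$ being finite, i.e.\ $x\in^*\Psi_+(y)$. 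Conversely, for $\aLc(b,1)\leqT\Lc(b,b-1)$, define $\Psi_-\colon\Scal(b,1)\to\prod b$ as before (picking the unique value of $\varphi(n)$ if nonempty, else a default), and $\Psi_+\colon\Scal(b,b-1)\to\prod b$ by letting $\Psi_+(\psi)(n)$ be, say, $\min(b(n)\menos\psi(n))$, which is well-defined since $b(n)\menos\psi(n)\neq\vacio$. The verification is that $\Psi_-(\varphi)\in^*\psi$ together with $\Psi_+(\psi)(n)\in\varphi(n)$ (for some $n$ large enough that $\Psi_-(\varphi)(n)\in\psi(n)$) forces $\varphi(n)=\{\Psi_+(\psi)(n)\}=\{\Psi_-(\varphi)(n)\}$, hence $\Psi_-(\varphi)(n)\notin\psi(n)$, a contradiction; so $\varphi\not\ni^\infty\Psi_+(\psi)$.

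The cardinal identities $\bfrak(\Ed_b)=\balc_{b,1}$, $\dfrak(\Ed_b)=\dalc_{b,1}$, $\blc_{b,b-1}=\balc_{b,1}$, $\dlc_{b,b-1}=\dalc_{b,1}$ then all follow from \autoref{cor:Tukeyval}(c). There is no real obstacle here; the only mild nuisance is handling the empty-valued coordinates of slaloms in $\Scal(b,1)$, which is dispatched by fixing any default choice function $e\in\prod b$ at the outset.
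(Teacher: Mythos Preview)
Your argument is correct and is exactly the routine verification the paper has in mind; the paper states this as a Fact without proof precisely because the Tukey connections are the obvious ones you wrote down (singleton-ize for one direction, pick a representative or complement for the other).
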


For $b\in\omega^\omega$, we have the following connection.

\begin{lemma}[{\cite{KM}}]\label{Ed-aLc}
Given $b, h\in\omega^\omega$such that $b\geq1$ and $h\geq1$, define $d(n):=\big\lceil\frac{b(n)}{h(n)}\big\rceil$. Then $\Ed_d\leqT\aLc(b,h)$, so $\balc_{b,h}\leq\balc_{d,1}$ and $\dalc_{d,1}\leq \dalc_{b,h}$.
\end{lemma}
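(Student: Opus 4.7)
The plan is to construct an explicit Tukey connection $(\Psi_-,\Psi_+)\colon \Ed_d\to\aLc(b,h)$; the cardinal inequalities then follow by \autoref{cor:Tukeyval}(b) together with \autoref{aLc1}.

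The key idea is that, for each $n$, the value $d(n)=\lceil b(n)/h(n)\rceil$ is exactly the least number of blocks of size at most $h(n)$ needed to cover $b(n)$. So first I would fix, for each $n<\omega$, a partition
\[
b(n)=\bigsqcup_{i<d(n)}B_{n,i}\quad\text{with}\quad |B_{n,i}|\leq h(n),
\]
for instance $B_{n,i}:=[i\cdot h(n),(i+1)\cdot h(n))\cap b(n)$ (identifying $b(n)$ with the ordinal $b(n)$).

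Now I would define the two coordinates of the Tukey connection. Given $x\in\prod d$, set $\Psi_-(x)(n):=B_{n,x(n)}$; since $|B_{n,x(n)}|\leq h(n)$, indeed $\Psi_-(x)\in\Scal(b,h)$. Given $y\in\prod b$, let $\Psi_+(y)(n)$ be the unique $i<d(n)$ with $y(n)\in B_{n,i}$ (explicitly, $\Psi_+(y)(n)=\lfloor y(n)/h(n)\rfloor$), so $\Psi_+(y)\in\prod d$.

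To verify the Tukey condition, suppose $x\in\prod d$ and $y\in\prod b$ are such that $\Psi_-(x)\not\ni^\infty y$, i.e.\ the set $A:=\{n<\omega:y(n)\in\Psi_-(x)(n)\}=\{n<\omega:y(n)\in B_{n,x(n)}\}$ is finite. Whenever $x(n)=\Psi_+(y)(n)$, by definition of $\Psi_+$ we have $y(n)\in B_{n,\Psi_+(y)(n)}=B_{n,x(n)}=\Psi_-(x)(n)$, so $n\in A$. Hence $\{n:x(n)=\Psi_+(y)(n)\}\subseteq A$ is finite, i.e.\ $x\neq^\infty\Psi_+(y)$, as required. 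The only mildly delicate point is ensuring the partition has blocks of size at most $h(n)$ and exactly $d(n)$ pieces, which is precisely why $d(n)$ is defined as the ceiling $\lceil b(n)/h(n)\rceil$; no real obstacle arises.
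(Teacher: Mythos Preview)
Your proof is correct and follows essentially the same approach as the paper: both fix a partition of each $b(n)$ into $d(n)$ blocks of size at most $h(n)$, define $\Psi_-$ and $\Psi_+$ via this partition, and observe that the resulting pair is a Tukey connection. Your write-up simply makes the partition explicit and spells out the verification of the Tukey condition, which the paper leaves to the reader.
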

\begin{proof}
%Let $h'(n):=\max\{h(n),1\}$. Then $\forall^\infty\, n\in\omega\colon h(n)= h'(n)$. Hence by, applying~\autoref{ex:easylc}, $\aLc(b,h)\eqT\aLc(b,h')$, so it suffices to prove that $\Ed_d\leqT\aLc(b,h')$. 
For each $n\in\omega$, we partition $b(n)$ into $d(n)$ many sets $\seq{b_{n,m}}{m<d(n)}$ of size ${\leq}h(n)$. Set $\Psi_-\colon \prod d\to\Swf(b,h)$ by $\Psi_-(x)(n):=b_{n,x(n)}$  and set $\Psi_+\colon \prod b\to\prod d$ such that, for any $y\in\prod b$, $\Psi_+(y)(n)$ is the unique $m\in d(n)$ such that $y(n)\in b_{n,m}$. It is easy to see that $(\Psi_-,\Psi_+)$ is the required Tukey connection.
\end{proof}

We discuss some of the ``uninteresting" cases of localization and anti-localization cardinals, which are based on the following results from Goldstern and Shelah~\cite{GS93}.

\begin{lemma}[{\cite[Lem.~1.8]{GS93}}]\label{LcTtrick}
    Let $b$ and $b'$ be countable sequences of non-empty sets, and $h,h'\in\omega^\omega$. Let $I\in \Ior$ and assume that, for $n<\omega$, we have a function $t_n\colon b'(n)\to \prod_{i\in I_n} b(i)$ such that
    \[\text{for any $s\in\prod_{i\in I_n}[b(i)]^{\leq h(i)}$, $\left|t_n^{-1}\left[\prod_{i\in I_n}s(i)\right]\right|\leq h'(n)$.}\]
    Then $\Lc(b',h')\leqT \Lc(b,h)$.
\end{lemma}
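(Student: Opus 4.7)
The plan is to build the Tukey connection $(\Psi_-,\Psi_+)\colon\Lc(b',h')\to\Lc(b,h)$ in the obvious way suggested by the data: use the functions $t_n$ to glue a real in $\prod b'$ into a real in $\prod b$, and use the hypothesis on preimages to turn a slalom in $\Scal(b,h)$ into a slalom in $\Scal(b',h')$ by taking the preimage of the ``rectangle'' determined on each interval $I_n$.

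More concretely, first I would define $\Psi_-\colon\prod b'\to\prod b$ as follows: given $x\in\prod b'$ and $i<\omega$, let $n$ be the unique index with $i\in I_n$ and set $\Psi_-(x)(i):=t_n(x(n))(i)$. Next, I would define $\Psi_+\colon\Scal(b,h)\to\Scal(b',h')$ by
\[
\Psi_+(\varphi)(n):=t_n^{-1}\!\left[\prod_{i\in I_n}\varphi(i)\right]=\left\{m\in b'(n):\, \forall\, i\in I_n\colon t_n(m)(i)\in\varphi(i)\right\}.
\]
The restriction $\varphi\frestr I_n$ lies in $\prod_{i\in I_n}[b(i)]^{\leq h(i)}$, so the hypothesis on $t_n$ gives $|\Psi_+(\varphi)(n)|\leq h'(n)$, confirming $\Psi_+(\varphi)\in\Scal(b',h')$.

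It remains to verify the Tukey condition. Assume $\Psi_-(x)\in^*\varphi$, i.e.\ $\Psi_-(x)(i)\in\varphi(i)$ for all but finitely many $i<\omega$. Since $I\in\Ior$ partitions $\omega$ into \emph{finite} intervals, the cofinite set of ``good'' indices $i$ meets only cofinitely many blocks $I_n$ in the entire block; hence for all but finitely many $n<\omega$ we have $t_n(x(n))(i)=\Psi_-(x)(i)\in\varphi(i)$ for every $i\in I_n$. By the definition of $\Psi_+$, this is exactly $x(n)\in\Psi_+(\varphi)(n)$, so $x\in^*\Psi_+(\varphi)$.

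I do not anticipate a real obstacle here: the maps are dictated by the hypothesis, the slalom bound is immediate from the preimage condition, and the ``cofinite pass to blocks'' argument is standard once $I$ is a partition into finite intervals. The only point requiring a touch of care is this last step, where one must use the finiteness of each $I_n$ to upgrade ``cofinitely many $i$'' to ``cofinitely many $n$''; without it, isolated exceptions inside each block could conceivably survive, but finiteness of the blocks rules this out.
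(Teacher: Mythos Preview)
Your proof is correct and matches the paper's approach exactly: the paper defines the same pair of maps (concatenating $t_n(x(n))$ for $\Psi_-$, and $\varphi\mapsto t_n^{-1}\big[\prod_{i\in I_n}\varphi(i)\big]$ for $\Psi_+$), only more tersely, leaving the Tukey verification you spelled out as implicit.
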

\begin{proof}
    Define $f\colon\prod b'\to \prod b$ such that, for $x\in\prod b'$, $f(x)$ is the concatenation of $t_n(x(n))$ for $n<\omega$; and define $g\colon \Scal(b,h)\to \Scal(b',h')$ such that, for $\varphi\in\Scal(b,h)$, $g(\varphi)_n:= t_n^{-1}\left[\prod_{i\in I_n}\varphi(i)\right]$ (which has size ${\leq}h'(n)$ by hypothesis). Thus, $(f,g)$ is the required Tukey connection.
\end{proof}

\begin{lemma}[{\cite[Lem.~1.10]{GS93}}]\label{Lced}
    Let $p(n):=2^n$ for $n<\omega$, and $0<N<\omega$. Then $\dlc_{p,N}=\cfrak$.
\end{lemma}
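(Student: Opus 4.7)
The strategy is to prove the two inequalities $\dlc_{p,N}\leq\cfrak$ and $\dlc_{p,N}\geq\cfrak$ separately. The upper bound is immediate: $\Scal(p,N)=\prod_{n<\omega}[2^n]^{\leq N}$ is a countable product of finite sets, hence has size at most $\cfrak$, and it trivially $\Lc(p,N)$-dominates itself.

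For the lower bound, I will exhibit a family $\{x_z:z\in 2^\omega\}\subseteq \prod p$ of size $\cfrak$ such that every slalom $\varphi\in\Scal(p,N)$ localizes at most countably many of its members. Identifying $p(n)=2^n$ with the set $2^n$ of binary strings of length $n$, I set $x_z(n):=z\frestr n$ for each $z\in 2^\omega$; clearly $z\mapsto x_z$ is injective.

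The key claim is that, for each $\varphi\in\Scal(p,N)$, the set $Z_\varphi:=\{z\in 2^\omega:x_z\in^*\varphi\}$ is countable. Writing $Z_\varphi=\bigcup_{m<\omega} Z_{\varphi,m}$ with $Z_{\varphi,m}:=\{z\in 2^\omega:\forall n\geq m,\,z\frestr n\in\varphi(n)\}$, I observe that for every $n\geq m$ the projection $\{z\frestr n:z\in Z_{\varphi,m}\}$ is contained in $\varphi(n)$ and therefore has size at most $N$. A quick pigeonhole argument then yields $|Z_{\varphi,m}|\leq N$: from $N+1$ pairwise distinct elements of $Z_{\varphi,m}$, one may choose a common level $n^*\geq m$ at which all their initial segments of length $n^*$ are pairwise distinct, contradicting $|\varphi(n^*)|\leq N$. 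Hence $|Z_\varphi|\leq\aleph_0$.

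Granted the claim, any $\Lc(p,N)$-dominating family $\Fwf\subseteq\Scal(p,N)$ satisfies $2^\omega=\bigcup_{\varphi\in\Fwf}Z_\varphi$, giving $\cfrak\leq|\Fwf|\cdot\aleph_0$ and hence $|\Fwf|\geq\cfrak$. The only non-trivial step is the branch-counting bound on $|Z_{\varphi,m}|$, which is an elementary K\"onig-type argument, so I do not anticipate any serious obstacle.
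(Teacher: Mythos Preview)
Your proof is correct and follows essentially the same route as the paper: both define $x_z(n):=z\frestr n$ and argue that a single slalom of width $N$ can localize only finitely many (in fact at most $N$, from a fixed level on) of the $x_z$'s. The paper phrases the contradiction more tersely (``the $x_z$'s are pairwise eventually different''), whereas you spell out the bound $|Z_{\varphi,m}|\leq N$ explicitly; the underlying argument is the same.
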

\begin{proof}
    The inequality $\dlc_{p,N}\leq\cfrak$ is clear. To see the converse, interpret $p(n)$ as the set of binary sequences of length $n$. For $z\in 2^\omega$ define $x_z\in\prod p$ by $x_z(i):= z\frestr i$. Towards a contradiction, assume that $S\subseteq\Scal(p,N)$ is a set of size ${<\cfrak}$ such that every real in $\prod p$ is localized by some member of $S$. Then, there is a single $\varphi\in S$ that localizes infinitely many reals of the form $x_z$, but this is impossible because the $x_z$'s are pairwise eventually different.
\end{proof}

\begin{lemma}[{\cite[Lem.~1.11]{GS93}}]\label{lem:GS93}
    Assume $2\leq N<\omega$. Then $\dalc_{N,1}=\dlc_{N,N-1}=\cfrak$.
\end{lemma}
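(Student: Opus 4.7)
My plan is to treat the two equalities separately. The first, $\dalc_{N,1}=\dlc_{N,N-1}$, is immediate from \autoref{aLc1} (specifically the assertion $\Lc(b,b-1)\eqT\aLc(b,1)$ for $b\in\omega^\omega$) applied to the constant sequence $b\equiv N$. So I focus on showing $\dlc_{N,N-1}=\cfrak$. The upper bound $\dlc_{N,N-1}\leq\cfrak$ is immediate, because $|\Scal(N,N-1)|\leq\cfrak$ and $\Scal(N,N-1)$ is trivially $\Lc(N,N-1)$-dominating.

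For the nontrivial direction $\dlc_{N,N-1}\geq\cfrak$, my strategy is to establish a Tukey reduction $\Lc(p,N')\leqT \Lc(N,N-1)$ via \autoref{LcTtrick}, where $p(n):=2^n$ and $N'<\omega$ is a constant to be chosen depending on $N$. Combined with \autoref{cor:Tukeyval}(b) and \autoref{Lced}, which gives $\dlc_{p,N'}=\cfrak$, this yields $\dlc_{N,N-1}\geq \dlc_{p,N'}=\cfrak$.

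To invoke \autoref{LcTtrick} with parameters $b'(n):=2^n$, $h'(n):=N'$, $b(i):=N$, $h(i):=N-1$, I need a partition $I\in\Ior$ and, for each $n<\omega$, a function $t_n\colon 2^n\to N^{I_n}$ such that
\[
\forall\, s\in\textstyle\prod_{i\in I_n}[N]^{\leq N-1}\colon\ \left|t_n^{-1}\!\left[\textstyle\prod_{i\in I_n}s(i)\right]\right|\leq N'.
\]
This combinatorial construction is the main obstacle; note that the naive choice ($t_n$ an injection, $N'=1$) fails whenever $N\geq 3$, since for any two points of $N^{I_n}$ one can find a missing value at each coordinate, placing them in a common box. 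The cleanest remedy is probabilistic. First fix $N'$ large enough that $C:=(2^N-1)\bigl(\tfrac{N-1}{N}\bigr)^{N'+1}<1$, which is possible since $(N-1)/N<1$. Then take $|I_n|\geq Kn$ with $K<\omega$ large enough that $C^K\cdot 2^{N'+1}<1$, and sample $t_n$ by picking each value $t_n(w)(i)$ independently and uniformly from $N$. For any fixed $s\in \prod_{i\in I_n}[N]^{\leq N-1}$ and any fixed $W\in[2^n]^{N'+1}$, the probability that $t_n[W]\subseteq\prod_{i\in I_n}s(i)$ is at most $((N-1)/N)^{(N'+1)|I_n|}$; a union bound over the at most $(2^N-1)^{|I_n|}$ choices of $s$ and the at most $2^{n(N'+1)}$ choices of $W$ shows the probability of any violation is bounded by $(C^K\cdot 2^{N'+1})^n<1$, so a valid $t_n$ exists (small $n$ can be handled by hand, or by enlarging $N'$ by a constant). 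Feeding these $t_n$ into \autoref{LcTtrick} completes the proof.
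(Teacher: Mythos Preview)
Your argument is correct, but the paper proceeds differently. Both proofs reduce to \autoref{LcTtrick} and \autoref{Lced} in the same way; the difference is in how the maps $t_n$ are produced. The paper gives an explicit, deterministic construction achieving the optimal constant $N'=N-1$: it sets $|I_n|=N^{2^n}$, enumerates \emph{all} functions $2^n\to N$ as $\{t_{n,i}:i\in I_n\}$, and defines $t_n(j)(i):=t_{n,i}(j)$. Then if $N$ distinct elements $j_0,\dots,j_{N-1}$ of $2^n$ all landed in a box $\prod_i s(i)$, the coordinate $i$ corresponding to a function sending $j_k\mapsto k$ would force $s(i)=N$, contradicting $|s(i)|\leq N-1$. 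Your probabilistic route is perfectly valid and arguably more robust (the union-bound template adapts easily to variants of the problem), but it pays with a larger, non-explicit $N'$ and larger intervals $|I_n|$; the paper's ``universal family'' trick is shorter, fully constructive, and yields the sharp constant --- though the sharpness is not needed for the lemma itself.
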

\begin{proof}
    By \autoref{aLc1} it is enough to show that $\dlc_{N,N-1}=\cfrak$. The inequality $\leq$ is clear. For the converse, in virtue of \autoref{Lced} it is enough to show that $\dlc_{p,N-1}\leq \dlc_{N,N-1}$, even better, we show that $\Lc(p,N-1)\leqT \Lc(N,N-1)$. Let $I=\la I_n:\, n<\omega\ra$ be the interval partition of $\omega$ such that $|I_n|=N^{p(n)}$, let $\set{t_{n,i}}{i\in I_n}$ enumerate all functions from $p(n)$ into $N$, and define $t_n\colon p(n)\to N^{I_n}$ by $t_n(j):=\la t_{n,i}(j):\, i\in I_n\ra$. 
    
    By \autoref{LcTtrick}, it suffices to show that $\left|t_n^{-1}\left[\prod_{i\in I_n}s(i)\right]\right|\leq N-1$ for any $s\in ([N]^{\leq N-1})^{I_n}$. Towards a contradiction, assume we have pairwise different $j_k\in p(n)$ for $k<N$ such that $t_n(j_k)\in\prod_{i\in I_n}s(i)$. Choose some $t\colon p(n)\to N$ such that $t(j_k):=k$ for all $k<N$, so $t=t_{n,i}$ for some $i\in I_n$. Then, $k=t_{n,i}(j_k)\in s(i)$ for all $k<N$, i.e., $s(i)=N$, which contradicts that $|s(i)|\leq N-1$.
\end{proof}

As a consequence, localization cardinals are easy to calculate when $h$ does not diverge to infinity.

\begin{theorem}\label{Lochnotinfty}
   If $h$ does not diverge to infinity then $\dlc_{b,h}\geq \cfrak$ and $\blc_{b,h}=N+1$ where $N$ is the minimum natural number such that $A_N:=\{n<\omega:\, h(n)=N\}$ is infinite. Moreover, $\dlc_{b,h}=\cfrak$ when $\forall^\infty\, n<\omega\colon |b(n)|\leq\cfrak$.
\end{theorem}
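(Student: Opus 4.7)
The plan is to prove the three assertions separately. First observe that the standing assumption $h(n)\geq 1$ cofinitely forces $N\geq 1$, and by the minimality of $N$ the set $B:=\{n<\omega : h(n)<N\}$ is finite, so $h(n)\geq N$ cofinitely. Also, since $h(n)<|b(n)|$ cofinitely, $|b(n)|\geq N+1$ for all but finitely many $n\in A_N$; after removing these finitely many exceptions from $A_N$ (which by \autoref{compinj} does not change any relevant cardinal), we may as well assume $|b(n)|\geq N+1$ on all of $A_N$.

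For $\dlc_{b,h}\geq\cfrak$, I would chain three known facts. By \autoref{compinj}, $\dlc_{b\restriction A_N,\,h\restriction A_N}\leq \dlc_{b,h}$. Since $h\restriction A_N$ is constantly $N$ and $|b(n)|\geq N+1$ on $A_N$, \autoref{ex:easylc} gives $\dlc_{N+1,\,N}\leq \dlc_{b\restriction A_N,\,N}$. Finally, \autoref{lem:GS93}, applied to the value $N+1\geq 2$, yields $\dlc_{N+1,\,N}=\cfrak$, completing this part.

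For $\blc_{b,h}=N+1$, the lower bound proceeds by showing every family $\{x_k : k<K\}\subseteq \prod b$ of size $K\leq N$ is $\Lc(b,h)$-bounded. Define $\varphi(n):=\{x_k(n): k<K\}$ for $n\notin B$ and choose $\varphi(n)$ of size $\leq h(n)$ arbitrarily for $n\in B$; then $|\varphi(n)|\leq K\leq N\leq h(n)$ cofinitely, so $\varphi\in\Scal(b,h)$ and $x_k\in^*\varphi$ for every $k$. For the upper bound, pick pairwise distinct $y_0(n),\dots,y_N(n)\in b(n)$ for each $n\in A_N$ (possible since $|b(n)|\geq N+1$) and extend $y_0,\dots,y_N$ arbitrarily off $A_N$; any slalom $\varphi\in\Scal(b,h)$ localizing every $y_i$ would satisfy $\{y_0(n),\dots,y_N(n)\}\subseteq\varphi(n)$ cofinitely, forcing $|\varphi(n)|\geq N+1>N=h(n)$ on an infinite subset of $A_N$, a contradiction.

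For the ``moreover'' clause, under the assumption $|b(n)|\leq\cfrak$ cofinitely, $\Scal(b,h)$ itself is $\Lc(b,h)$-dominating: given $x\in\prod b$, the slalom $\varphi(n):=\{x(n)\}$ (adjusted on the finite set where $h(n)=0$, which is empty under our standing assumption) localizes $x$. A direct cardinal count gives $|\Scal(b,h)|\leq \cfrak^{\aleph_0}=\cfrak$, so $\dlc_{b,h}\leq\cfrak$, and the previously proved lower bound closes the gap. The main delicate point is the first part: keeping the monotonicity direction in \autoref{ex:easylc} straight and recognizing $A_N$ as the handle that reduces the general case to the constant-$N$ situation covered by \autoref{lem:GS93}; everything else is bookkeeping.
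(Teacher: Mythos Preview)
Your proof is correct and follows essentially the same route as the paper's: reducing to $\Lc(N+1,N)$ via \autoref{compinj} and \autoref{ex:easylc}, then invoking \autoref{lem:GS93}. The only cosmetic difference is that for the two bounds on $\blc_{b,h}$ you construct the bounded $N$-family and the unbounded $(N+1)$-family explicitly inside $\prod b$, whereas the paper first passes to $\Lc(N+1,N)$ via the Tukey connection and then observes that the $N+1$ constant functions in $(N+1)^\omega$ cannot be localized by a single slalom; these are the same argument unwound.
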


In the previous theorem, we have $\dlc_{b,h}=\cof([\lambda]^{\aleph_0})$ when $\lambda:=\limsup_{n<\omega} |b(n)|>\cfrak$ by \autoref{CM:unct}.

\begin{proof}
   By \autoref{ex:easylc},~\ref{compinj} and the general assumption that $b(n)>h(n)>0$ for all $n<\omega$, we get $\Lc(N+1,N)\leqT \Lc(b\frestr A_N,h\frestr A_N)\leqT\Lc(b,h)$, so $\cfrak\leq \dlc_{b,h}$ by \autoref{lem:GS93}, and $\blc_{b,h}\leq\blc_{N+1,N}$. Note that no single slalom in $\Swf(N+1,N)$ localizes all constant functions in $(N+1)^\omega$, so $\blc_{b,h}\leq \blc_{N+1,N}\leq N+1$. Conversely,
   by the definition of $N$, $N\leq^* h$, so $N<\blc_{b,h}$. Hence $\blc_{b,h}=\blc_{N+1,N}=N+1$.

   In addition, if $\forall\, n<\omega\colon |b(n)|\leq\cfrak$ then $\dlc_{b,h}\leq|\Scal(b,h)|=\big|\prod b\big|=\cfrak$. %, so the ``moreover" part follows by~\autoref{ex:easylc}.
\end{proof}

We have similar result for the anti-localization cardinals when the sequence $\la \frac{h(n)}{|b(n)|}:\, n<\omega\ra$ does not converge to $0$ (put $\frac{h(n)}{|b(n)|}=0$ when $b(n)$ is an infinite set). 

\begin{theorem}\label{aLctrivial}
  If the sequence $\seq{\frac{h(n)}{|b(n)|}}{n<\omega}$ does not converge to $0$ then $\balc_{b,h}=N$ where $N$ is the minimum natural number such that $B_N:=\set{n<\omega}{|b(n)|\leq N\cdot h(n)}$ is infinite, and $\dalc_{b,h}=\cfrak$.
\end{theorem}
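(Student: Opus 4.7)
My plan is to verify the hypothesis gives a well-defined finite $N\geq 2$: the non-convergence of $\seq{h(n)/|b(n)|}{n<\omega}$ to $0$ yields some $\delta>0$ and infinitely many $n$ with $|b(n)|\leq\lceil 1/\delta\rceil\, h(n)$, so $B_N$ is infinite for some finite $N$; moreover the standing convention that $h(n)<|b(n)|$ for cofinitely many $n$ makes $B_1$ finite, so the minimum such $N$ satisfies $N\geq 2$.

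For $\balc_{b,h}=N$, both directions rest on partitioning $b(n)$. For the upper bound, fix for each $n\in B_N$ a partition $b(n)=\bigsqcup_{k<N}A_{n,k}$ with $|A_{n,k}|\leq h(n)$ (possible since $|b(n)|\leq Nh(n)$), and define slaloms $\varphi_k(n):=A_{n,k}$ on $B_N$ and $\varphi_k(n):=\varnothing$ elsewhere, giving $\varphi_k\in\Scal(b,h)$. A pigeonhole on the infinite set $B_N$ shows that for any $x\in\prod b$ some $k<N$ satisfies $x(n)\in\varphi_k(n)$ infinitely often, proving $\{\varphi_k:k<N\}$ is $\aLc(b,h)$-unbounded. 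For the lower bound, minimality of $N$ makes $B_{N-1}$ finite, so cofinitely many $n$ satisfy $|b(n)|>(N-1)h(n)$; given any $N-1$ slaloms, at each such $n$ the union of their values has size $\leq(N-1)h(n)<|b(n)|$, so one can diagonally pick $x(n)$ outside this union, witnessing boundedness.

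For $\dalc_{b,h}=\cfrak$, the bound $\dalc_{b,h}\leq|\prod b|\leq\cfrak$ is trivial. For the lower bound, I would apply \autoref{compinj} to reduce to the case $B_N=\omega$ (so $|b(n)|\leq Nh(n)$ for all $n$); the partitions above then yield surjections $\pi_n\colon b(n)\to N$ with $|\pi_n^{-1}\{k\}|\leq h(n)$. These build a Tukey connection $\Ed_N\leqT\aLc(b,h)$ as follows: set $\Psi_-(z)(n):=\pi_n^{-1}\{z(n)\}\in\Scal(b,h)$ and $\Psi_+(y)(n):=\pi_n(y(n))$; the trivial implication $z(n)=\pi_n(y(n))\Rightarrow y(n)\in\pi_n^{-1}\{z(n)\}$ verifies the Tukey condition. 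Combining with \autoref{aLc1} and \autoref{lem:GS93} (applicable because $N\geq 2$) gives $\cfrak=\dfrak(\Ed_N)=\dalc_{N,1}\leq\dalc_{b,h}$.

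The only real obstacle is the bookkeeping around the partitions and the reduction to $B_N=\omega$; once $N$ is identified from the hypothesis, both equalities reduce to straightforward pigeonhole, diagonalization, and a single Tukey connection built from the cited lemmas.
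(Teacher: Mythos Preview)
Your proof is correct and follows essentially the same idea as the paper: partition each $b(n)$ for $n\in B_N$ into $N$ pieces of size $\leq h(n)$ to connect $\aLc(b,h)$ with $\Ed_N$, then invoke \autoref{aLc1} and \autoref{lem:GS93}. The only cosmetic difference is that the paper packages both the bound $\balc_{b,h}\leq N$ and the bound $\cfrak\leq\dalc_{b,h}$ into the single Tukey connection $\Ed_N\leqT\aLc(b,h)$ (extending the slalom by $\emptyset$ off $B_N$ rather than first restricting via \autoref{compinj}), and then reads off $\balc_{b,h}\leq\balc_{N,1}=N$ from \autoref{Lochnotinfty}; your explicit $N$-slalom pigeonhole argument is just this Tukey connection applied to the $N$ constant functions in $N^\omega$, so nothing is lost or gained.
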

\begin{proof}
  For all $n\in B_N$ there is a partition $\la c_{n,j}:\, j<N\ra$ of $b(n)$ such that $0<|c_{n,j}|\leq h(n)$ for each $j<N$. %Let $\iota\colon \omega\to B_N$ be the increasing enumeration of $B_N$. 
  Define $\Psi_-\colon N^{B_N}\to\Swf(b,h)$ such that, for each $z\in N^{B_N}$ and $n\in B_N$, $\Psi_-(z)(n)=c_{n,z(n)}$, and $\Psi_-(z)(i)=\emptyset$ for all $i\notin B_N$; and define $\Psi_+\colon \prod b\to N^{B_N}$ such that, for $n\in B_N$, $\Psi_+(x)(n)=j$ iff $x(n)\in c_{n,j}$. Note that the pair $(\Psi_-,\Psi_+)$ witnesses that $\Ed_N\leqT\aLc(b,h)$, so $\balc_{b,h}\leq\balc_{N,1}$ and $\dalc_{N,1}\leq\dalc_{b,h}$. On the other hand, by \autoref{aLc1} and~\autoref{Lochnotinfty}, $\balc_{N,1}=\blc_{N,N-1}=N$ and $\dalc_{N,1}=\dlc_{N,N-1}=\cfrak$. Therefore $\balc_{b,h}\leq N$ and $\cfrak\leq\dalc_{b,h}$.

  The converse inequality for $\balc_{b,h}$ follows from the fact that $(N-1)\cdot h(n)<|b(n)|$ for all but finitely many $n<\omega$ (so $N-1<\balc_{b,h}$).

  Put $b'(n):=\min\{\omega,|b(n)|\}$. As $|b'(n)|\leq|b(n)|$ for all $n<\omega$, by~\autoref{ex:easylc} $\dalc_{b,h}\leq\dalc_{b',h}$. On the other hand, $\dalc_{b',h}\leq\big|\prod b'\big|\leq\cfrak$, so $\dalc_{b,h}\leq \cfrak$.
\end{proof}

In conclusion, localization cardinals are interesting when $h(n)$ diverges to infinity, and anti-localization cardinals are interesting when the quotient $\frac{h(n)}{|b(n)|}$ converges to zero. In the particular case when $b$ is the constant function $\omega$, the parameter $h$ becomes irrelevant (more generally, $h$ is quite irrelevant when $b$ is as in \autoref{CM:unct}). To show this, we first present a couple of very useful preliminary results.

\begin{lemma}\label{poderLc}
    Let $b=\la b(i):\, i<\omega\ra$ be a sequence of non-empty sets and $h,h^+\in\omega^\omega$. Assume that there is some $I\in\Ior$ such that, for all $k<\omega$ and $i\in I_k$, $h(i)\geq h^+(k)$, and let $b^+(k):=\prod_{i\in I_k}b(i)$. Then $\Lc(b,h)\leqT \Lc(b^+,h^+)$.
\end{lemma}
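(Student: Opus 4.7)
The plan is to construct a Tukey connection $(\Psi_-,\Psi_+)\colon \Lc(b,h)\to \Lc(b^+,h^+)$ by using the partition $I$ to bundle coordinates of $b$ into single coordinates of $b^+$, and to extract coordinate-wise projections of $b^+$-slaloms to obtain $b$-slaloms.

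First, I would define $\Psi_-\colon \prod b \to \prod b^+$ by setting
\[
\Psi_-(x)(k) := x\frestr I_k \in \prod_{i\in I_k} b(i) = b^+(k),
\]
so that each $x\in \prod b$ is reorganized into the sequence of its restrictions to the blocks $I_k$. Next, for $\varphi^+\in \Scal(b^+,h^+)$ I would define $\Psi_+(\varphi^+)\in \Scal(b,h)$ block-wise by taking coordinate projections: for $i\in I_k$, set
\[
\Psi_+(\varphi^+)(i) := \bigl\{\, s(i) \,:\, s \in \varphi^+(k) \,\bigr\}.
\]
Since $|\varphi^+(k)| \leq h^+(k)$, the projection onto the $i$-th coordinate has size at most $h^+(k) \leq h(i)$, hence $\Psi_+(\varphi^+)(i) \in [b(i)]^{\leq h(i)}$, so $\Psi_+(\varphi^+)\in \Scal(b,h)$ as required.

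The verification of the Tukey connection is then the heart of the argument, but it is straightforward: assume $\Psi_-(x) \in^* \varphi^+$, i.e., there is $k_0<\omega$ such that $x\frestr I_k \in \varphi^+(k)$ for all $k\geq k_0$. Then for every $i \in I_k$ with $k \geq k_0$,
\[
x(i) = (x\frestr I_k)(i) \in \bigl\{\, s(i) \,:\, s \in \varphi^+(k) \,\bigr\} = \Psi_+(\varphi^+)(i),
\]
so $x(i) \in \Psi_+(\varphi^+)(i)$ holds for all but finitely many $i$ (the exceptions lying in the finite union $\bigcup_{k<k_0} I_k$). Hence $x\in^* \Psi_+(\varphi^+)$.

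I do not expect any genuine obstacle here; the only subtlety is making sure the size bound on the projected slalom uses the hypothesis $h(i)\geq h^+(k)$ uniformly on the block $I_k$, which is exactly why the Tukey map takes $b^+(k)$-slaloms of width $h^+(k)$ to $b$-slaloms of width $h(i)$ rather than width $h^+(k)$.
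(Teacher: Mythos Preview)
Your proof is correct and essentially identical to the paper's own argument: both define $\Psi_-(x)(k)=x\frestr I_k$ and $\Psi_+(\varphi^+)(i)=\{s(i):s\in\varphi^+(k)\}$ for $i\in I_k$, check the size bound via $|\varphi^+(k)|\leq h^+(k)\leq h(i)$, and verify the Tukey condition in the same way.
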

\begin{proof}
    Define $\Psi_-\colon\prod b\to\prod b^+$ and $\Psi_+\colon \Swf(b^+,h^+)\to\Swf(b,h)$ by $\Psi_-(x)=\la x\frestr I_k:\, k<\omega\ra$ and $\Psi_+(S)(i)=\{t(i):\, t\in S(k)\}$ where $k<\omega$ is the unique such that $i\in I_k$. Note that $|\Psi_+(S)(i)|\leq |S(k)|\leq h^+(k)\leq h(i)$, so $\Psi_+(S)\in\Swf(b,h)$. It is routine to show that $\Psi_-(x)\in^* S$ implies $x\in^* \Psi_+(S)$.
\end{proof}

\begin{lemma}\label{poderaLc}
    Let $b=\la b(i):\, i<\omega\ra$ be a sequence of non-empty sets, $h^+\in\omega^\omega$ with $h^+\geq^*1$. Let $I = \la I_n:\, n<\omega\ra$ be the interval partition of $\omega$ such that $|I_n|=h^+(n)$ for all $n<\omega$, and define $b^+(n):=\prod_{k\in I_n}b(k)$. Then $\aLc(b^+,h^+)\leqT \Ed_b$.
\end{lemma}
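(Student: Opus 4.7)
The plan is to define the Tukey connection $(\Psi_-,\Psi_+)$ explicitly, using the partition $I$ to ``unchunk'' slaloms on $b^+$ into a single real in $\prod b$. First, I take $\Psi_+\colon \prod b\to \prod b^+$ to be the obvious repackaging map $\Psi_+(y)(n):=y\frestr I_n$, which is an element of $b^+(n)=\prod_{k\in I_n}b(k)$ by definition.

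To define $\Psi_-\colon \Scal(b^+,h^+)\to \prod b$, the crucial numerical match is $|\varphi(n)|\leq h^+(n)=|I_n|$. For each $n<\omega$ with $h^+(n)\geq 1$, enumerate $I_n=\{i_{n,0}<i_{n,1}<\cdots<i_{n,h^+(n)-1}\}$; then, given $\varphi\in \Scal(b^+,h^+)$, enumerate $\varphi(n)=\{t^\varphi_{n,j}:\,j<h^+(n)\}$, allowing repetitions when $|\varphi(n)|<h^+(n)$ and using a fixed default element of $b^+(n)$ if $\varphi(n)=\emptyset$. Set $\Psi_-(\varphi)(i_{n,j}):=t^\varphi_{n,j}(i_{n,j})$, and extend arbitrarily on the finitely many coordinates covered by intervals $I_n$ with $h^+(n)=0$; the hypothesis $h^+\geq^* 1$ ensures only finitely many such exceptional $n$.

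The Tukey property then follows by contrapositive. If $\varphi\ni^\infty\Psi_+(y)$, that is, $y\frestr I_n\in \varphi(n)$ for infinitely many $n$, then for each such $n$ pick $j_n<h^+(n)$ with $y\frestr I_n=t^\varphi_{n,j_n}$, and evaluate at $i_{n,j_n}\in I_n$:
\[y(i_{n,j_n})=t^\varphi_{n,j_n}(i_{n,j_n})=\Psi_-(\varphi)(i_{n,j_n}).\]
The intervals $I_n$ are pairwise disjoint, so the witnesses $i_{n,j_n}$ are all distinct, hence $\Psi_-(\varphi)=^\infty y$. Therefore $\Psi_-(\varphi)\neq^\infty y$ implies $\varphi\not\ni^\infty \Psi_+(y)$, which is exactly the Tukey condition $\aLc(b^+,h^+)\leqT\Ed_b$.

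I do not expect any real obstacle: the construction is essentially forced by the pigeonhole enabled by $|\varphi(n)|\leq |I_n|$, and the verification is a one-line disjointness argument. The only book-keeping subtlety is making the choice of enumerations $t^\varphi_{n,j}$ uniform as a function of $\varphi$, and handling the degenerate cases $\varphi(n)=\emptyset$ or $h^+(n)=0$, both of which are innocuous.
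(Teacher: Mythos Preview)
Your proof is correct and is essentially the same as the paper's: both define $\Psi_+(y)(n)=y\frestr I_n$ and build $\Psi_-(\varphi)$ by enumerating each $\varphi(n)$ with $|I_n|$ many labels (the paper enlarges $\varphi(n)$ to $\varphi^*(n)$ of size exactly $h^+(n)$, you allow repetitions directly) and setting $\Psi_-(\varphi)(k)=t^\varphi_{n,k}(k)$ for $k\in I_n$, then verify the contrapositive via the same one-line disjointness argument. Your handling of the degenerate cases $\varphi(n)=\emptyset$ and $h^+(n)=0$ is slightly more explicit than the paper's, but the idea is identical.
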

\begin{proof}
    Define $F\colon\Swf(b^+,h^+)\to \prod b$ in the following way: if $\varphi\in\Swf(b^+,h^+)$ then find $\varphi^*\in\Swf(b^+,h^+)$ such that $\varphi(n)\subseteq\varphi^*(n)$ and $|\varphi^*(n)|=h^+(n)=|I_n|$ for all $n$, so enumerate $\varphi^*(n):=\{t^\varphi_{n,k}:\, k\in I_n\}$ and define $F(\varphi):=x_\varphi$ by $x_\varphi(k):=t^\varphi_{n,k}(k)$ when $k\in I_n$. On the other hand, define $G\colon \prod b\to\prod b^+$ by $G(y):=\la y\frestr I_n:\, n<\omega\ra$. It remains to show that $G(y)\in^\infty \varphi$ implies $y=^\infty x_\varphi$. There are infinitely many $n<\omega$ such that $y\frestr I_n\in\varphi(n)$. For such $n$, $y\frestr I_n=t^\varphi_{n,k}$ for some $k \in I_n$, so $x_\varphi(k)=t^\varphi_{n,k}(k)=y(k)$. Hence $x_\varphi =^\infty y$.
\end{proof}

\begin{theorem}\label{Lcomegah}
If $h,h'\in\omega^\omega$ are functions diverging to $\infty$, then $\Lc(\omega,h)\eqT\Lc(\omega,h')$. In particular, $\blc_{\omega,h}=\blc_{\omega,h'}$ and $\dlc_{\omega,h} = \dlc_{\omega,h'}$.
\end{theorem}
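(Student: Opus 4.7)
By symmetry (swap $h$ and $h'$), it suffices to establish one Tukey connection, say $\Lc(\omega,h)\leqT\Lc(\omega,h')$. The main tool is \autoref{poderLc}: given an interval partition $I\in\Ior$ and a function $h^+$ with $h(i)\geq h^+(k)$ for every $k<\omega$ and $i\in I_k$, the lemma gives $\Lc(\omega,h)\leqT\Lc(b^+,h^+)$ where $b^+(k):=\omega^{|I_k|}$. Since each $b^+(k)$ is countably infinite, \autoref{ex:easylc} applied in both directions yields $\Lc(b^+,h^+)\eqT\Lc(\omega,h^+)$, so the problem reduces to producing such an $I$ with $h^+:=h'$.

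Next I would construct $I$ by exploiting the divergence of $h$: for each $k<\omega$ the set $\{i<\omega:h(i)<h'(k)\}$ is finite, so I can pick a strictly increasing sequence $0=N_0<N_1<N_2<\cdots$ such that $h(i)\geq h'(k)$ whenever $k\geq1$ and $i\geq N_k$. Setting $I_k:=[N_k,N_{k+1})$ gives an interval partition of $\omega$, and the required inequality $h(i)\geq h'(k)$ holds for every $k\geq 1$ and every $i\in I_k$.

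The only remaining bookkeeping issue is the block $I_0$, on which $h(i)\geq h'(0)$ may fail at finitely many positions. I would bypass this by passing to the finite modification $\tilde h$ defined by $\tilde h(i):=\max\{h(i),h'(0)\}$ for $i\in I_0$ and $\tilde h(i):=h(i)$ otherwise: since $\tilde h=^* h$ (in fact $\tilde h\geq h$ everywhere), \autoref{ex:easylc} applied in both directions gives $\Lc(\omega,h)\eqT\Lc(\omega,\tilde h)$, and now \autoref{poderLc} does apply to $\tilde h$ and $h^+:=h'$. Chaining the resulting equivalences and reductions, and then invoking symmetry, yields $\Lc(\omega,h)\eqT\Lc(\omega,h')$; the equalities $\blc_{\omega,h}=\blc_{\omega,h'}$ and $\dlc_{\omega,h}=\dlc_{\omega,h'}$ then follow from \autoref{cor:Tukeyval}. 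The only remotely delicate point is this finite-modification fix on the initial block, forced by the fact that $h\geq h'$ only holds eventually while interval partitions must cover $\omega$; the rest is a direct assembly of the tools already built in the excerpt.
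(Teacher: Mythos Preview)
Your proof is correct and follows essentially the same route as the paper: both arguments use \autoref{poderLc} to pass to a product space $b^+$ with countably infinite coordinates and then invoke \autoref{ex:easylc} to identify $\Lc(b^+,h^+)\eqT\Lc(\omega,h^+)$. The only cosmetic difference is in the bookkeeping of the initial block: the paper avoids your finite-modification fix by routing through an auxiliary strictly increasing $h^+$ with $h^+(0)=0$ that dominates both $h$ and $h'$, so that the condition $h(i)\geq h^+(0)$ on $I_0$ is vacuous, and then shows $\Lc(\omega,h)\eqT\Lc(\omega,h^+)\eqT\Lc(\omega,h')$ (one direction of each equivalence is just \autoref{ex:easylc} since $h^+\geq^* h,h'$).
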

\begin{proof}
Fix $h^+\in\omega^\omega$ strictly increasing such that $h^+(0)=0$, $h^+\geq^* h$ and $h^+\geq^*h'$. It is enough to show that $\Lc(\omega,h^+)\eqT\Lc(\omega,h)$ (the same proof also works for $h'$). By \autoref{ex:easylc} we get $\Lc(\omega,h^+)\leqT\Lc(\omega,h)$.

For the converse,
define $g\colon\omega\to\omega$ strictly increasing such that $g(0)=0$ and 
$$\forall\, k<\omega\ \forall\, i\geq g(k)\colon h(i)\geq h^+(k).$$
Let $I_k=[g(k),g(k+1))$ (interval in $\omega$) and set $b^+(k):=\omega^{I_k}$, which has size $\aleph_0$. Hence $\Lc(b^+,h^+)\eqT\Lc(\omega,h^+)$ by \autoref{ex:easylc}, so we only need to show $\Lc(\omega,h)\leqT\Lc(b^+,h^+)$. But this follows by \autoref{poderLc}.
\end{proof}

%The following lemmas illustrate the connection between $\aLc(\omega,h)$ and~$\Ed$. In this case, $\aLc(\omega,h)$ does not depend much on $h$.

\begin{theorem}\label{aLcEd}
If $h\geq^*1$ then $\aLc(\omega,h)\eqT\Ed$.
\end{theorem}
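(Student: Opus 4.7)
The plan is to prove the two Tukey connections separately, using the preparatory lemmas at the end of the section.

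For the easy direction $\Ed\leqT\aLc(\omega,h)$: since $h\geq^*1$, we have $1\leq h(n)$ for almost all $n$, so \autoref{ex:easylc} gives $\aLc(\omega,1)\leqT\aLc(\omega,h)$. Combined with $\aLc(\omega,1)\eqT\Ed_\omega=\Ed$ from \autoref{aLc1}, this yields the desired inequality.

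The content is in the other direction $\aLc(\omega,h)\leqT\Ed$. The idea is to group coordinates into blocks of size $h(n)$ so that \autoref{poderaLc} applies directly. Concretely, after possibly modifying $h$ on a finite set (which does not affect Tukey type by \autoref{compinj}), assume $h(n)\geq 1$ for all $n<\omega$ and take the interval partition $I=\langle I_n:\,n<\omega\rangle$ of $\omega$ with $|I_n|=h(n)$. Apply \autoref{poderaLc} with $b$ the constant sequence $\omega$ and $h^+:=h$: set $b^+(n):=\omega^{I_n}$, which is countably infinite, and obtain $\aLc(b^+,h)\leqT \Ed_\omega=\Ed$.

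It remains to transfer this back to $\aLc(\omega,h)$. Since $|b^+(n)|=\aleph_0=|\omega|$ for all $n$, \autoref{ex:easylc} applied with $b:=b^+$, $b':=\omega$, and $h'=h$ yields $\aLc(\omega,h)\leqT\aLc(b^+,h)$. Composing the two Tukey connections gives $\aLc(\omega,h)\leqT\Ed$, completing the proof.

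The only subtlety is ensuring the interval partition is well defined, which is exactly the role of the hypothesis $h\geq^*1$; beyond that, the argument is a clean chaining of the ``power'' \autoref{poderaLc} with the monotonicity \autoref{ex:easylc}, so no essential obstacle arises.
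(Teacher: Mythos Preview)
Your proof is correct and follows essentially the same route as the paper: both directions are handled identically, invoking \autoref{aLc1} and \autoref{ex:easylc} for $\Ed\leqT\aLc(\omega,h)$, and \autoref{poderaLc} together with $|b^+(n)|=\aleph_0$ for the converse. One small remark: the finite modification of $h$ is more directly justified by \autoref{ex:easylc} (eventual agreement gives Tukey equivalence in both directions) than by \autoref{compinj}, which concerns restricting the domain rather than changing values---but the content is the same and the paper itself glosses over this point.
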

\begin{proof}
Recall from \autoref{aLc1} that $\Ed \eqT \aLc(\omega,1)$. It is clear that $\aLc(\omega,1)\leqT\aLc(\omega,h)$ follows by \autoref{ex:easylc}, so it is enough to show that $\aLc(\omega,h)\leqT\Ed$. Let $\la I_n:\, n<\omega\ra$ be the interval partition of $\omega$ such that $|I_n|=h(n)$, and let $b(n):=\omega^{I_n}$. Since $|b(n)|=\aleph_0$ for all but finitely many $n$, $\aLc(b,h)\eqT\aLc(\omega,h)$, so we only need to show $\aLc(b,h)\leqT\Ed$. But this follows by \autoref{poderaLc}.
\end{proof}

The previous results indicate that we only get four cardinals from $\blc_{\omega,h}$, $\dlc_{\omega,h}$, $\balc_{\omega,h}$ and $\dalc_{\omega,h}$ (the first pair when $h\to\infty$, and the second when $h\geq^*1$). We show in the following sections the pretty well-known result of Bartoszy\'nski that these cardinals characterize concrete cardinals associated with measure and category, concretely, the additivity and cofinality of measure, and the uniformity and covering of category.

To finish this section, we discuss the limits of localization and anti-localization cardinals.

\begin{definition}\label{DefminLc}
Define the following cardinal characteristics.
   \[\begin{array}{rlrl}
      \minLc_h:= & \min\{\blc_{b,h}:\, b\in\omega^\omega\}, &
      \supLc_h:= & \sup\{\dlc_{b,h}:\, b\in\omega^\omega\},\\
      \supaLc_h:= & \sup\{\balc_{b,h}:\, b\in\omega^\omega\}, &
      \minaLc_h:= & \min\{\dalc_{b,h}:\, b\in\omega^\omega\}.
   \end{array}\]
   For the first two cardinals we suppress $h$ when $h=\id_\omega$, and for the last two cardinals we suppress $h$ when $h=1$.
\end{definition}

The parameter $h$ is irrelevant for these limits.

\begin{theorem}\label{Lcsup}
Let $h\in\omega^\omega$. Then:
   \begin{enumerate}[label=\normalfont(\alph*)]
       \item\label{Lcsupone} $\minLc_h=\minLc$ and $\supLc_h=\supLc$ when $h$ goes to infinity.
       \item\label{Lcsuptwo} $\supaLc_h=\supaLc$ and $\minaLc_h =\minaLc$ when $h\geq^*1$.
   \end{enumerate}
\end{theorem}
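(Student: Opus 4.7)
Both parts will be derived from the ``coarsening'' lemmas already established in this section. The idea is that every instance of $\Lc(b,h)$ with $h\to\infty$ can be Tukey-related to an instance with $h=\id_\omega$ via a suitable grouping of coordinates, and similarly every instance of $\aLc(b,h)$ with $h\geq^*1$ can be Tukey-related to an instance with $h=1$.

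For part~\ref{Lcsupone}, I would apply \autoref{poderLc} in both directions. To show $\minLc_h\leq\minLc$ and $\supLc\leq\supLc_h$, fix $b\in\omega^\omega$ and construct an interval partition $I=\la I_k:\, k<\omega\ra$ with $\min I_k\geq h(k)$ (possible since each $h(k)<\omega$; a finite initial segment can be absorbed into $I_0$ using \autoref{compinj}, or simply noting $h(i)\geq 0$ trivially). Setting $b'(k):=\prod_{i\in I_k}b(i)\in\omega$, the hypothesis $\id(i)=i\geq h(k)$ for $i\in I_k$ lets \autoref{poderLc} yield $\Lc(b,\id)\leqT\Lc(b',h)$, which gives $\blc_{b',h}\leq\blc_{b,\id}$ and $\dlc_{b,\id}\leq\dlc_{b',h}$; taking $\inf$ and $\sup$ over $b$ gives the two inequalities. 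For the reverse $\minLc\leq\minLc_h$ and $\supLc_h\leq\supLc$, the same lemma is used the other way: given $b$, use the divergence $h\to\infty$ to pick integers $n_0<n_1<\cdots$ with $h(i)\geq k$ for every $i\geq n_k$, set $I_k:=[n_k,n_{k+1})$ (padding $I_0$ to start at $0$), and let $b'(k):=\prod_{i\in I_k}b(i)$. Then \autoref{poderLc} gives $\Lc(b,h)\leqT\Lc(b',\id)$, hence $\blc_{b',\id}\leq\blc_{b,h}$ and $\dlc_{b,h}\leq\dlc_{b',\id}$; again the desired inequalities follow by taking $\inf$/$\sup$.

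For part~\ref{Lcsuptwo}, one direction is essentially immediate from \autoref{ex:easylc}. Since $h\geq^*1$, after discarding a finite initial segment we have $h(n)\geq 1=1(n)$ for all $n$, and the same $b$ compared with itself yields $\aLc(c,1)\leqT\aLc(c,h)$ for every $c\in\omega^\omega$. By \autoref{cor:Tukeyval} this gives $\balc_{c,h}\leq\balc_{c,1}$ and $\dalc_{c,1}\leq\dalc_{c,h}$, and passing to $\sup$ and $\min$ over $c$ produces $\supaLc_h\leq\supaLc$ and $\minaLc\leq\minaLc_h$. For the converse, I would apply \autoref{poderaLc}: given $c\in\omega^\omega$, let $I=\la I_n:\, n<\omega\ra\in\Ior$ satisfy $|I_n|=h(n)$ (available since $h\geq^*1$, trimming a finite initial segment if needed), and set $c'(n):=\prod_{k\in I_n}c(k)\in\omega$. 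The lemma then yields $\aLc(c',h)\leqT\Ed_c$, and combining with \autoref{aLc1} gives $\aLc(c',h)\leqT\aLc(c,1)$. Hence $\balc_{c,1}\leq\balc_{c',h}\leq\supaLc_h$ and $\minaLc_h\leq\dalc_{c',h}\leq\dalc_{c,1}$; taking $\sup$ and $\min$ over $c$ produces $\supaLc\leq\supaLc_h$ and $\minaLc_h\leq\minaLc$, finishing the argument.

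The only mildly delicate point throughout is handling the boundary/finite-initial-segment issues when choosing the interval partitions (the conditions of \autoref{poderLc} and \autoref{poderaLc} are stated for all $k$, but our hypotheses on $h$ hold only eventually). This is entirely routine: by \autoref{compinj}, all four cardinals are invariant under discarding a finite initial segment of coordinates, so we may assume without loss of generality that $h\geq 1$ everywhere in part~\ref{Lcsuptwo}, and that the intervals $I_k$ in part~\ref{Lcsupone} can be chosen starting from $0$ after a suitable cofinite restriction. No deep obstacle is expected; the real content lies in the two coarsening lemmas themselves, which are already available.
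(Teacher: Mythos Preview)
Your proof is correct and uses the same core tools as the paper (\autoref{poderLc}, \autoref{poderaLc}, \autoref{ex:easylc}, \autoref{aLc1}), with only a minor organizational difference in part~\ref{Lcsupone}: the paper routes through an intermediate $h^+\geq^*\max\{h,\id_\omega\}$ and uses \autoref{poderLc} once plus \autoref{ex:easylc} for the converse, whereas you apply \autoref{poderLc} directly in both directions between $h$ and $\id_\omega$; part~\ref{Lcsuptwo} is essentially identical to the paper's argument.
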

\begin{proof}   
   \ref{Lcsupone}: Pick some $h^+\in\omega^\omega$ increasing such that $h^+(0)=0$ and $h^+\geq^* h$. Since $h$ diverges to infinity, we can find some $I\in\Ior$ as in \autoref{poderLc}, so there is some $b^+\in \omega^\omega$ such that $\Lc(b,h)\leqT \Lc(b^+,h^+)$. Therefore 
   $\minLc_{h^+}\leq\blc_{b^+,h^+}\leq\blc_{b,h}$ and $\dlc_{b,h}\leq\dlc_{b^+,h^+}\leq\supLc_{h^+}$ for any $b\in\omega$, so $\minLc_{h^+}\leq \minLc_h$ and $\supLc_h\leq \supLc_{h^+}$. The converse inequalities follow easily by~\autoref{ex:easylc}.

   Since $h^+\geq \id_\omega$, the same argument shows that $\minLc = \minLc_{h^+}$ and $\supLc = \supLc_{h^+}$.

   \ref{Lcsuptwo}: Let $b\in\omega^\omega$. By \autoref{poderaLc} we can find some $b'\in\omega^\omega$ such that $\aLc(b',h)\leqT \Ed_b$, so $\minaLc_h\leq \dalc_{b',h}\leq \dalc_{b,1}$ and $\balc_{b,1}\leq \balc_{b',h}\leq \supaLc_h$. Therefore, $\minaLc_h\leq \minaLc$ and $\supaLc\leq \supaLc_h$. The converse inequalities follow by \autoref{ex:easylc}.
\end{proof}

These cardinals can be used to characterize the localization and anti-localization cardinals for $b=\omega$.

\begin{theorem}\label{Lcomega}
   $\blc_{\omega,h}=\min\{\bfrak,\minLc_h\}$ and $\dlc_{\omega,h}=\max\{\dfrak,\supLc_h\}$.
\end{theorem}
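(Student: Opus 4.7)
The plan is to prove each equality by its two inequalities, exploiting the obvious inclusion $\Scal(b,h)\subseteq\Scal(\omega,h)$ whenever $b\in\omega^\omega$ together with \autoref{ex:easylc} (from which $\Lc(b,h)\leqT\Lc(\omega,h)$), and a \emph{modify-on-a-finite-set} trick that converts a function $f\in\omega^\omega$ bounded cofinitely by some $g$ into a function $\tilde f\in\prod(g+1)$ agreeing with $f$ on a cofinite set. Two auxiliary observations drive the argument: (i) for any slalom $\varphi\in\Scal(\omega,h)$ the pointwise maximum $g_\varphi(n):=\max\varphi(n)$ is a well-defined element of $\omega^\omega$ that $\leq^*$-dominates every $f$ with $f\in^*\varphi$; and (ii) modifying $f$ on a finite initial segment preserves both $f\in^*\varphi$ and $f\leq^* g$.

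For $\blc_{\omega,h}\leq\min\{\bfrak,\minLc_h\}$: \autoref{ex:easylc} gives $\blc_{\omega,h}\leq\blc_{b,h}$ for every $b\in\omega^\omega$, hence $\blc_{\omega,h}\leq\minLc_h$. For $\blc_{\omega,h}\leq\bfrak$, take any $\leq^*$-unbounded $F\subseteq\omega^\omega$ of size $\bfrak$; if some slalom $\varphi\in\Scal(\omega,h)$ localized every $f\in F$, then $g_\varphi$ would $\leq^*$-bound $F$, a contradiction, so $F$ is $\Lc(\omega,h)$-unbounded. For the reverse, take $F\subseteq\omega^\omega$ with $|F|<\min\{\bfrak,\minLc_h\}$. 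Since $|F|<\bfrak$ I pick $g\in\omega^\omega$ with $g\geq h$ and $f\leq^* g$ for all $f\in F$. Set $b:=g+1\in\omega^\omega$ and, for each $f\in F$, let $\tilde f\in\prod b$ agree with $f$ on a cofinite set. Because $|F|<\minLc_h\leq\blc_{b,h}$, some $\varphi\in\Scal(b,h)\subseteq\Scal(\omega,h)$ localizes all $\tilde f$, and by observation (ii) this $\varphi$ localizes every $f\in F$.

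The equality $\dlc_{\omega,h}=\max\{\dfrak,\supLc_h\}$ is proved dually. The inequalities $\supLc_h\leq\dlc_{\omega,h}$ (again from \autoref{ex:easylc}) and $\dfrak\leq\dlc_{\omega,h}$ (apply $\varphi\mapsto g_\varphi$ to an $\Lc(\omega,h)$-dominating family of slaloms, producing a $\leq^*$-dominating family in $\omega^\omega$) give the lower bound. For the upper bound, fix a $\leq^*$-dominating family $G\subseteq\omega^\omega$ of size $\dfrak$, all of whose members satisfy $g\geq h$, and for each $g\in G$ choose an $\Lc(g+1,h)$-dominating family $\Phi_g\subseteq\Scal(g+1,h)$ of size at most $\dlc_{g+1,h}\leq\supLc_h$. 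Then $\Phi:=\bigcup_{g\in G}\Phi_g\subseteq\Scal(\omega,h)$ has size at most $\dfrak\cdot\supLc_h=\max\{\dfrak,\supLc_h\}$, and the modify-on-finite-set trick shows every $f\in\omega^\omega$ is localized by some member of $\Phi$: dominate $f$ by some $g\in G$, pass to $\tilde f\in\prod(g+1)$, and use that some $\varphi\in\Phi_g$ localizes $\tilde f$ and hence $f$.

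No step is a genuine obstacle; the only care needed is preserving the standing convention $0<h(n)<|b(n)|$ when setting $b:=g+1$, which is why we insist $g\geq h$ from the start.
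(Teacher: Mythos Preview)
Your proof is correct and follows essentially the same route as the paper's: bound a small family by a single $g\in\omega^\omega$ (or by members of a dominating family), pass to the finite-valued $b$, and invoke $\blc_{b,h}$ (resp.\ $\dlc_{b,h}$) there, together with the Tukey connection $\omega^\omega\leqT\Lc(\omega,h)$ for the easy inequalities. The only cosmetic slip is your claim that $g_\varphi(n):=\max\varphi(n)$ is ``well-defined'': since $\varphi(n)\in[\omega]^{\leq h(n)}$ may be empty, you should set $g_\varphi(n):=\max(\varphi(n)\cup\{0\})$ instead, which changes nothing in the argument.
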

\begin{proof}
    Assume that $F\subseteq\omega^\omega$ and $|F|<\min\{\bfrak,\minLc_h\}$. Therefore, there is some $d\in\omega^\omega$ such that, for every $x\in F$, $\forall^\infty\, i<\omega\colon x(i)<d(i)$. On the other hand, as $|F|<\blc_{d,h}$, we can find an slalom in $\Swf(d,h)\subseteq\Swf(\omega,h)$ that localizes all the reals in $F$ (just use a family $F'\subseteq\prod d$ of the same size as $F$ such that each member of $F'$ is a finite modification of some member of $F$ and vice-versa). Therefore, $\blc_{\omega,h}\geq\min\{\bfrak,\minLc_h\}$.

    Now we prove $\dlc_{\omega,h}\leq\max\{\dfrak,\supLc_h\}$. Choose a dominating family $D\subseteq\omega^\omega$ of size $\dfrak$ and, for each $d\in D$, choose  a family of slaloms $S_d\subseteq\Swf(d,h)$ that witnesses $\dlc_{d,h}$. Note that $S:=\bigcup_{d\in D}S_d\subseteq\Swf(\omega,h)$ has size $\leq\max\{\dfrak,\supLc_h\}$ and that every real in $\omega^\omega$ is localized by some slalom in $S$, so $\dlc_{\omega,h}\leq|S|$.

    It is easy to see that $\omega^\omega\leqT\Lc(\omega,h)$, so
    $\blc_{\omega,h}\leq\bfrak$ and $\dfrak\leq\dlc_{\omega,h}$. On the other hand, as $\blc_{\omega,h}\leq\blc_{b,h}$ and $\dlc_{b,h}\leq\dlc_{\omega,h}$ for any $b\in\omega^\omega$ (by~\autoref{ex:easylc}), the converse inequalities follow.
\end{proof}

Just as in~\autoref{Lcomega}, we have:

\begin{theorem}\label{aLcomega}
$\min\{\bfrak,\dalc_{\omega,h}\}=\min\{\bfrak,\minaLc_h\}$ and $\max\{\dfrak,\balc_{\omega,h}\}=\max\{\dfrak,\supaLc_h\}$.
\end{theorem}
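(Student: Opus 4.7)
The plan is to mimic the proof of \autoref{Lcomega}, exploiting the duality between bounding/dominating. First I would record the two easy inequalities: by~\autoref{ex:easylc}, for any $b\in\omega^\omega$ we have $\aLc(\omega,h)\leqT\aLc(b,h)$ (since $|b(n)|\leq\aleph_0=|\omega|$), so by~\autoref{cor:Tukeyval} $\balc_{b,h}\leq\balc_{\omega,h}$ and $\dalc_{\omega,h}\leq\dalc_{b,h}$. Taking the sup/min over $b\in\omega^\omega$ yields $\supaLc_h\leq\balc_{\omega,h}$ and $\dalc_{\omega,h}\leq\minaLc_h$, which immediately gives ``$\leq$'' in the first equation and ``$\geq$'' in the second.

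For the nontrivial direction of the first equation, it suffices to show that, when $\dalc_{\omega,h}<\bfrak$, we have $\minaLc_h\leq\dalc_{\omega,h}$ (the case $\dalc_{\omega,h}\geq\bfrak$ is handled by the trivial inequality $\min\{\bfrak,\minaLc_h\}\leq\bfrak$). Fix a dominating set $D\subseteq\omega^\omega$ for $\aLc(\omega,h)$ with $|D|=\dalc_{\omega,h}<\bfrak$, and choose $b\in\omega^\omega$ with $b\geq1$ such that $x\leq^* b$ for every $x\in D$. For each $x\in D$ pick a finite modification $\tilde x\in\prod b$ (set $\tilde x(n)=x(n)$ when $x(n)<b(n)$, $\tilde x(n)=0$ otherwise), and let $\tilde D:=\{\tilde x:x\in D\}$. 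I would then argue that $\tilde D$ dominates $\aLc(b,h)$: any $\varphi\in\Swf(b,h)$ is in particular a slalom in $\Swf(\omega,h)$, hence anti-localizes some $x\in D$, and since $\tilde x$ differs from $x$ on a finite set, $\varphi$ anti-localizes $\tilde x$ as well. Thus $\minaLc_h\leq\dalc_{b,h}\leq|\tilde D|\leq\dalc_{\omega,h}$.

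For the nontrivial direction of the second equation, the construction is the exact dual of the one used for $\dlc_{\omega,h}\leq\max\{\dfrak,\supLc_h\}$ in~\autoref{Lcomega}. Take a dominating family $D\subseteq\omega^\omega$ of size $\dfrak$, and for each $d\in D$ an unbounded family $F_d\subseteq\Swf(d,h)\subseteq\Swf(\omega,h)$ in $\aLc(d,h)$ of size $\balc_{d,h}\leq\supaLc_h$. Set $F:=\bigcup_{d\in D}F_d$, so that $|F|\leq\max\{\dfrak,\supaLc_h\}$. To see that $F$ is unbounded in $\aLc(\omega,h)$, let $x\in\omega^\omega$, pick $d\in D$ with $x\leq^* d$, and modify $x$ on a finite set to obtain $\tilde x\in\prod d$; by unboundedness of $F_d$ there is $\varphi\in F_d$ with $\tilde x\in^\infty\varphi$, and then $x\in^\infty\varphi$ too because $x$ and $\tilde x$ agree cofinitely. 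Hence $\balc_{\omega,h}\leq|F|\leq\max\{\dfrak,\supaLc_h\}$.

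I do not expect any serious obstacle: the whole proof is a careful bookkeeping of how passing to a bound $b$ (or cover $d$) only alters reals on finite sets, which preserves both $\in^*$ and $\in^\infty$. The only point that slightly differs from the localization version is that the slaloms live in the common space $\Swf(\omega,h)$ while the witnessing reals must be pushed into $\prod b$ (resp.\ $\prod d$), but the finite-modification trick takes care of this uniformly.
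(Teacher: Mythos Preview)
Your proof is correct and follows exactly the approach the paper intends: the paper gives no explicit proof of \autoref{aLcomega}, merely remarking ``Just as in~\autoref{Lcomega}, we have:'', and your argument is precisely the dual of that proof, with the finite-modification trick playing the same role. One tiny point worth making explicit is that the bound $b$ (resp.\ each $d\in D$) should be chosen above $h$ so that $\aLc(b,h)$ is non-trivial, but this is automatic after a harmless adjustment.
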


Other cardinals like $\sup\{\blc_{b,h}:\, b\in\omega^\omega\}$ are in principle not that interesting: this supremum would be $\blc_{1,h}$, which is undefined, or at least if $b$ is restricted to be above $h$, then it would be $\blc_{h+1,h}=\balc_{h+1,1}$. Also, when $h$ does not go to infinity, $\minLc_h$ and $\supLc_h$ are easily characterized by~\autoref{Lochnotinfty}. However, when putting more restrictions, we have some interesting limits we will discuss in the following sections, for example, the minimum of $\balc_{b,h}$ for $b,h\in\omega^\omega$ such that $\sum_{n<\omega}\frac{h(n)}{b(n)}$ converges, has a deep connection with the covering of measure.

\section{Measure and localization}\label{sec:NLc}

We first review our terminology.
%Let now us present a topological space that generalizes the well-known Cantor and Baire space. To state it, we need some preliminaries. 
Given a sequence $b=\seq{b(n)}{n\in \omega}$ of non-empty sets, denote 
\[\Seq_{<\omega} b:=\bigcup_{n<\omega}\prod_{i<n}b(i).\] 
For each $\sigma\in\Seq_{<\omega}(b)$ define
\[[s]:=[s]_b:=\set{x\in\prod b}{ s\subseteq x}.\]
%and for each $\sigma\in(\Seq_{<\omega}(b))^\omega$ define 
% \[[\sigma]_\infty:=[\sigma]_{b,\infty}=\set{x \in \prod b}{\exists^{\infty}\, n < \omega\colon \sigma(n) \subseteq x}.\]
As a topological space, $\prod b$ has the product topology with each $b(n)$ endowed with the discrete topology. Note that $\set{[s]_b}{s\in \Seqw b}$ forms a base of clopen sets for this topology. When each $b(n)$ is countable we have that $\prod b$ is a Polish space and, in addition, if $|b(n)|\geq 2$ for infinitely many $n$, then $\prod b$ is a perfect Polish space. The most relevant instances are:
\begin{itemize}
   \item The Cantor space $2^\omega$, when $b(n)=2$ for all $n$, and 
   \item The Baire space $\omega^\omega$, when $b(n)=\omega$ for all $n$.
\end{itemize}
We now review the Lebesgue measure on $\prod b$ when each $b(n)\leq\omega$ is an ordinal. For any ordinal $0<\eta\leq\omega$, the probability measure $\mu_\eta$ on the power set of $\eta$ is defined by:
\begin{itemize}
\item when $\eta=n<\omega$, $\mu_{n}$ is the measure such that, for all $k<n$,
$\mu_{n}(\{k\})=\frac{1}{n}$, and 
\item when $\eta=\omega$, $\mu_{\omega}$ is the measure such that, for  $k<\omega$,
$\mu_{\omega}(\{k\})=\frac{1}{2^{k+1}}$.
\end{itemize}
Denote by $\Lb_b$ the product measure of $\la \mu_{b(n)}:\, n<\omega\ra$, which we call \emph{the Lebesgue Measure on $\prod b$}, so $\Lb_b$ is a  probability measure on the Borel $\sigma$-algebra of $\prod b$. More concretely, $\Lb_{b}$ %$=\prod_{n<\omega}\mu_{b(n)}\colon \mathcal{B}(\prod b)\to[0,1]$ 
is the unique measure on the Borel $\sigma$-algebra such that, for any $s\in \Seqw b$, $\Lb_b([s])=\prod_{i<|s|}\mu_{b(i)}(\{s(i)\})$. In particular, denote by $\Lb$, $\Lb_{2}$ and $\Lb_{\omega}$ the Lebesgue measure on $\R$, on $2^\omega$, and on $\omega^\omega$, respectively. 

\begin{remark}\label{TukeyPolish}
\ 
    \begin{enumerate}[label= \rm (\arabic*)]
        \item If $X$ is a perfect Polish space, then $\Mwf(X)\eqT \Mwf(\R)$ and $\Cbf_{\Mwf(X)}\eqT \Cbf_{\Mwf(\R)}$, where $\Mwf(X)$ denotes the ideal of meager subsets of $X$ (see~\cite[Subsec.~15.D]{Ke2}). Therefore, the cardinal characteristics associated with the meager ideal are independent of the perfect Polish space used to calculate it. When the space is clear from the context, we write $\Mwf$ for the meager ideal.
    
    \item Let $X$ be a Polish space. Denote by $\Bwf(X)$ the $\sigma$-algebra of Borel subsets of $X$, and assume that $\mu\colon\Bwf(X)\to [0,\infty]$ is a $\sigma$-finite measure such that $\mu(X)>0$ and every singleton has measure zero.
    Denote by $\Nwf(\mu)$ the ideal generated by the $\mu$-measure zero sets, which is also denoted by $\Nwf(X)$ when the measure on $X$ is clear.
    Then $\Nwf(\mu)\eqT \Nwf(\Lb)$ and $\Cbf_{\Nwf(\mu)}\eqT \Cbf_{\Nwf(\Lb)}$ where $\Lb$ is the Lebesgue measure on $\R$ (see~\cite[Thm.~17.41]{Ke2}). Therefore, the quadruples cardinal characteristics associated with both measure zero ideals are the same.

    When $b=\la b(n):\, n<\omega\ra$, each $b(n)\leq \omega$ is a non-zero ordinal, and $\prod b$ is perfect, we have that $\Lb_b$ satisfies the properties of $\mu$ above.

    When the measure space is understood, we just write $\Nwf$ for the null ideal.

    \item\label{TukeyPolishc} For $b$ as above, denote by $\Ewf(\prod b)$ the ideal generated by the $F_\sigma$ measure zero subsets of $\prod b$. Likewise, define $\Ewf(\R)$ and $\Ewf([0,1])$.
When $\prod b$ is perfect, the map $F_b\colon \prod b\to[0,1]$ defined by
\[F_b(x):=\sum_{n<\omega}\frac{x(n)}{\prod_{i\leq n}b(i)}\]
is a continuous onto function, and it preserves measure. Hence, this map preserves sets between $\Ewf(\prod b)$ and $\Ewf([0,1])$ via images and pre-images. Therefore, $\Ewf(\prod b)\eqT \Ewf([0,1])$ and $\Cbf_{\Ewf(\prod b)} \eqT \Cbf_{\Ewf([0,1])}$. We also have $\Ewf(\R)\eqT \Ewf([0,1])$ and $\Cbf_{\R} \eqT \Cbf_{\Ewf([0,1])}$, as well as $\Ewf(\omega^\omega)\eqT \Ewf(2^\omega)$ and $\Cbf_{\Ewf(\omega^\omega)}\eqT \Cbf_{\Ewf(2^\omega)}$.

When the space is clear, we just write $\Ewf$.
Therefore, the cardinal characteristics associated with $\Ewf$ do not depend on the previous spaces.
    \end{enumerate}
\end{remark}

%\subsection{Additivity and cofinality of the null ideal}

The purpose of this section is to show the following characterization of the additivity and cofinality of the null ideal.

\begin{theorem}[Bartoszy\'nski~1984 \cite{Ba1984}]\label{loc-antloc}
Let $h\in\omega^\omega$. If $h$ diverges to infinity, then $\Lc(\omega,h)\eqT\Nwf$. In particular, $\blc_{\omega,h}=\add(\Nwf)$ and $\dlc_{\omega,h}=\cof(\Nwf)$.  
\end{theorem}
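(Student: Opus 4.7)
The plan is to establish $\Lc(\omega,h)\eqT\Nwf$ by constructing explicit Tukey connections in both directions; together with \autoref{cor:Tukeyval} and \autoref{CarCh} this yields $\blc_{\omega,h}=\add(\Nwf)$ and $\dlc_{\omega,h}=\cof(\Nwf)$. Using \autoref{Lcomegah} I fix $h$ to any specific function diverging to infinity, say $h(n):=n$, and using \autoref{TukeyPolish} I realize $\Nwf$ through the Lebesgue measure on $\cantor$, identified via an interval partition $\{I_n:\, n<\omega\}$ with the product space $\prod_n 2^{|I_n|}$.

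For the easier direction $\Nwf\leqT\Lc(\omega,h)$, I use standard Borel coding: for each $n<\omega$, let $\seq{V_n^k}{k<\omega}$ be a countable parametrized family of open subsets of $\cantor$ of measure $<2^{-n-1}/h(n)$, rich enough that every null $N$ admits a $G_\delta$-cover $N\subseteq\bigcap_n V_n^{x_N(n)}$ for some $x_N\in\baire$. Set $\Phi_-(N):=x_N$. For $\varphi\in\Scal(\omega,h)$ define the null set $\Phi_+(\varphi):=\bigcup_m\bigcap_{n\geq m}\bigcup_{k\in\varphi(n)}V_n^k$, where the inner union at level $n$ has measure $\leq|\varphi(n)|\cdot 2^{-n-1}/h(n)\leq 2^{-n-1}$, so each inner intersection has measure $0$. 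The Tukey condition is immediate: if $x_N(n)\in\varphi(n)$ for all $n\geq n_0$, then $V_n^{x_N(n)}\subseteq\bigcup_{k\in\varphi(n)}V_n^k$ eventually, hence $N\subseteq\bigcap_{n\geq n_0}V_n^{x_N(n)}\subseteq\Phi_+(\varphi)$.

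For $\Lc(\omega,h)\leqT\Nwf$, the heart of Bartoszy\'nski's 1984 argument, I invoke a ``Small Sets Theorem'': for appropriately chosen interval partition $\{I_n\}$, every null $N\subseteq\cantor$ is contained in a set of the form $\{y:\exists^\infty n,\, y\frestr I_n\in J_n\}$ with $J_n\subseteq 2^{I_n}$ satisfying $\sum_n|J_n|/2^{|I_n|}<\infty$ and $|J_n|\leq h(n)$. Given $x\in\baire$, I encode $x$ into a null set $\Psi_-(x)\subseteq\cantor$ --- constructed as the set of $y$'s that ``witness $x$ through suitable shifts'' along the partition $\{I_n\}$ --- so that any null superset $N$ forces its small-sets representation to detect $x(n)$ at cofinitely many $n$. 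Taking $\Psi_+(N)(n)$ to be the decoding (through a fixed injection of $\omega$ into $2^{I_n}$) of $J_n$ yields a slalom in $\Scal(\omega,h)$ with $x\in^*\Psi_+(N)$.

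The main obstacle is the quantifier mismatch between ``$\exists^\infty$'', which is what Borel--Cantelli and the small-sets theorem naturally deliver, and ``$\forall^\infty$'', which the $\in^*$-relation requires. Naive single-point codings $\Psi_-(x)=\{y_x\}$ fail because they force the decoded slalom to capture $x(n)$ only infinitely often, not eventually; indeed, the cofinality of ``eventually-in-$\varphi$'' slalom-null sets in $\Nwf$ is genuinely false (a product $\prod A_n$ with $|A_n|>h(n)$ witnesses this). Resolving this requires a careful thickening of $\Psi_-(x)$ coupled with a block-wise regrouping of the interval partition so that hits at many consecutive levels conspire into eventual containment, any mild inflation of slalom width being absorbed by \autoref{Lcomegah}. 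This combinatorial step is substantial and constitutes the most technical portion of the proof.
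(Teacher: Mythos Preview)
Your treatment of $\Nwf\leqT\Lc(\omega,h)$ is essentially correct and close to the paper's: there, null sets are coded via sequences of clopen pieces of controlled measure (\autoref{def:codenull}, \autoref{codenull}) and a slalom is read off exactly as you describe.

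The hard direction $\Lc(\omega,h)\leqT\Nwf$, however, is not actually proved in your proposal: you correctly identify the central obstacle (the $\exists^\infty$/$\forall^\infty$ mismatch) but only gesture at a resolution via ``thickening'' and ``block-wise regrouping''. Two concrete problems. First, your ``Small Sets Theorem'' is not available in the form you state: the partition $\{I_n\}$ witnessing smallness must depend on $N$, and indeed (as the paper notes after \autoref{presmall}) not every null set is small relative to a \emph{fixed} partition; so your $\Psi_+$ cannot decode through ``a fixed injection of $\omega$ into $2^{I_n}$'' and still land in $\Scal(\omega,h)$. Second, even granting some workable coding, you give no definition of $\Psi_-(x)$ and no argument that containment $\Psi_-(x)\subseteq N$ forces \emph{eventual} (rather than merely frequent) capture of $x$---this is precisely the step you flag as ``substantial'' and leave undone.

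The paper's route is entirely different and bypasses small sets. Working in $\baire$ (not $\cantor$), one first builds (\autoref{lem:indp}) a doubly indexed family $\la K_{i,j}:i,j<\omega\ra$ of probabilistically \emph{independent} clopen sets with $\Lb_\omega(K_{i,j})=1/h(i)$; this is why the reduction via \autoref{Lcomegah} is to an $h$ with $\sum 1/h(i)<\infty$. Then $F(x):=\bigcap_m\bigcup_{n\geq m}K_{n,x(n)}$ is null by Borel--Cantelli. For the reverse map, given $B\in\Nwf$ one picks a compact $K_B\subseteq\baire\smallsetminus B$ of positive measure with no relatively open null pieces (\autoref{Bartfct1}); independence forces, for each basic $[s]$ meeting $K_B$, that $\sum_i|g^B_s(i)|/h(i)<\infty$ where $g^B_s(i):=\{j:K_B\cap[s]\cap K_{i,j}=\emptyset\}$, and these are glued along a summable family $\la\varepsilon_s\ra$ into a single $\varphi^B\in\Scal(\omega,h)$. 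The $\exists^\infty\to\forall^\infty$ conversion is achieved not by combinatorial regrouping but by the \emph{Baire Category Theorem} inside $K_B$: from $F(x)\subseteq B$ one gets $K_B\cap\bigcap_m\bigcup_{i\geq m}K_{i,x(i)}=\emptyset$, so some tail union is non-dense in $K_B$, yielding a single $s$ with $x(i)\in g^B_s(i)\subseteq\varphi^B(i)$ for all large $i$.
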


The proof of the theorem above will be split into two parts. In the first one we prove $\Lc(\omega,h)\leqT\Nwf(\baire)$ and the second part we prove $\Nwf(\cantor)\leqT\Lc(\omega,h)$. Thanks to \autoref{Lcomegah}, it is enough to show this for a single suitable $h$, so we do it for those satisfying $\sum_{i<\omega}\frac{1}{h(i)}<\infty$. Before entering into the proof of the first Tukey connection, we present the following lemma about the probabilistic independence of sets.

% we first distinguish the proof of~\autoref{loc-antloc} into two parts: If $\sum_{i\in\omega}\frac{1}{h(i)}<\infty$ then 
%     \begin{enumerate}[label=\normalfont(\faBug$_{\arabic*}$)]
%         \item\label{LcN1} $\Lc(\omega,h)\leqT\Nwf(\baire)$.
        
%         \item\label{LcN2} $\Nwf(\baire)\leqT\Lc(\omega,h)$.
%     \end{enumerate}

\begin{lemma}\label{lem:indp}
  Let $C$ be a countable set and $f\colon C\to[0,1]$. Then there are clopen sets $\la K_i:\, i\in C\ra$ in $\baire$ such that $\Lb_\omega(K_i)=f(i)$ and $\Lb_\omega\Big(\bigcap_{i\in F} K_i\Big)=\prod_{i\in F}\Lb_\omega(K_i)$ for all finite $F\subseteq C$.
\end{lemma}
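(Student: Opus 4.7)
The plan is to realize each $K_i$ as a cylinder depending on a single coordinate of $\omega^\omega$, with the coordinates chosen distinct across different $i$, so that probabilistic independence will follow for free from the product structure of $\Lb_\omega$. Since $C$ is countable, I would first fix an injection $C\to\omega$, $i\mapsto n_i$. Then, for each $i\in C$, I would choose $B_i\subseteq\omega$ with $\mu_\omega(B_i)=f(i)$: writing the binary expansion $f(i)=\sum_{k<\omega}\epsilon_k 2^{-k-1}$ with $\epsilon_k\in\{0,1\}$, I set $B_i:=\{k<\omega:\epsilon_k=1\}$, so that $\mu_\omega(B_i)=\sum_{k\in B_i}2^{-k-1}=f(i)$.

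Next, I would define
\[K_i:=\{x\in\omega^\omega:x(n_i)\in B_i\}.\]
To verify that $K_i$ is clopen, I would write $K_i=\bigcup_{k\in B_i}\{x\in\omega^\omega:x(n_i)=k\}$ as a union of basic clopen cylinders (hence open), and observe that the complement $\{x:x(n_i)\in\omega\smallsetminus B_i\}$ is open by the same argument. Since $K_i$ depends only on the $n_i$-th coordinate and $\Lb_\omega$ is the product of the measures $\mu_\omega$, this gives $\Lb_\omega(K_i)=\mu_\omega(B_i)=f(i)$.

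For independence, given a finite $F\subseteq C$, the set $\bigcap_{i\in F}K_i$ is a product rectangle in $\omega^\omega$ whose factor at coordinate $n$ equals $B_i$ when $n=n_i$ for some (necessarily unique) $i\in F$, and equals $\omega$ otherwise. Because the indices $\{n_i:i\in F\}$ are pairwise distinct, the defining property of the product measure immediately yields
\[\Lb_\omega\Big(\bigcap_{i\in F}K_i\Big)=\prod_{i\in F}\mu_\omega(B_i)=\prod_{i\in F}\Lb_\omega(K_i).\]
I do not expect any substantial obstacle here; the only mildly delicate point is confirming that $\{x:x(n)\in B\}$ is clopen in $\omega^\omega$ even when $B$ is infinite, which holds because both this set and its complement are unions of basic clopen cylinders.
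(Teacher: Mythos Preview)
Your proof is correct and follows essentially the same approach as the paper: both fix an injection from $C$ into $\omega$, realize each $K_i$ as a cylinder depending on a single (distinct) coordinate via the binary expansion of $f(i)$, and obtain independence from the product structure of $\Lb_\omega$. The paper's $g(i)$ and $S_i:=x_i^{-1}[\{1\}]$ are exactly your $n_i$ and $B_i$.
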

\begin{proof} For each $i\in C$ there is some $x_i\in\cantor$ such that $f(i)=\sum_{k\in\omega}x_i(k) 2^{-(k+1)}$. 
Choose a one-to-one function $g\colon C\to \omega$ and, for $i\in C$, define
$K_i:=\{z\in\baire:\, x_i(z(g(i)))=1\}$.
To see that $\Lb_\omega(K_i)=f(i)$ let $S_i:=x_i^{-1}[\{1\}]$, so we get  $K_i=\bigcup_{k\in S_i}\{z\in\baire :\, z(g(i))=k\}$. Note that this is a disjoint union and that each $\{z\in\baire :\, z(g(i))=k\}$ has measure $2^{-(k+1)}$. We leave the proof of independence to the reader.
\end{proof}

We are ready to prove the first Tukey connection for \autoref{loc-antloc}.

\begin{lemma}\label{Bartone}
Assume that $\sum_{i\in\omega}\frac{1}{h(i)}<\infty$. Then $\Lc(\omega,h)\leqT\Nwf(\baire)$.
\end{lemma}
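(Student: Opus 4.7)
To establish $\Lc(\omega,h)\leqT\Nwf(\baire)$, I build Tukey maps $\Psi_-\colon\baire\to\Nwf(\baire)$ and $\Psi_+\colon\Nwf(\baire)\to\Scal(\omega,h)$. By \autoref{Lcomegah}, $h$ may be replaced by any function diverging to infinity and satisfying $\sum_i 1/h(i)<\infty$; I fix such an $h$. Applying \autoref{lem:indp} to $C=\omega\times\omega$ and $f(i,k)=1/h(i)$ produces independent clopen sets $\seq{K_{i,k}}{i,k\in\omega}$ in $\baire$ with $\Lb_\omega(K_{i,k})=1/h(i)$; inspecting the proof of \autoref{lem:indp}, each $K_{i,k}$ depends on a single coordinate $g(i,k)\in\omega$, with $g\colon\omega\times\omega\to\omega$ injective.

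Set $\Psi_-(x):=\bigcap_n\bigcup_{i\geq n}K_{i,x(i)}$. Since $\sum_i\Lb_\omega(K_{i,x(i)})=\sum_i 1/h(i)<\infty$, the first Borel--Cantelli lemma gives $\Psi_-(x)\in\Nwf(\baire)$. To define $\Psi_+$: given null $N$, use outer regularity to fix open $V_i\supseteq N$ with $\Lb_\omega(V_i)<1/2$, and put $\varphi_N(i):=\{k\in\omega:K_{i,k}\subseteq V_i\}$. The inclusion $\bigcup_{k\in\varphi_N(i)}K_{i,k}\subseteq V_i$ combined with the independence identity $\Lb_\omega\bigl(\bigcup_{k\in\varphi_N(i)}K_{i,k}\bigr)=1-(1-1/h(i))^{|\varphi_N(i)|}$ and the estimate $-\log(1-1/h(i))\geq 1/h(i)$ forces $|\varphi_N(i)|\leq(\log 2)h(i)<h(i)$, so $\varphi_N\in\Scal(\omega,h)$.

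The Tukey condition $\Psi_-(x)\subseteq N\Rightarrow x\in^*\varphi_N$ is the main obstacle. Arguing by contrapositive: if $S:=\{i:x(i)\notin\varphi_N(i)\}$ is infinite, then $K_{i,x(i)}\not\subseteq V_i$ for $i\in S$, so $\Lb_\omega(K_{i,x(i)}\setminus V_i)>0$, and I wish to extract $z\in\baire$ with $z\in K_{i,x(i)}$ for infinitely many $i\in S$ and $z\notin N$, contradicting $\Psi_-(x)\subseteq N$. The difficulty is that the naive second Borel--Cantelli approach fails since $\sum_i\Lb_\omega(K_{i,x(i)})<\infty$. Instead, exploiting that the coordinates $g(i,x(i))$ for $i\in S$ are pairwise distinct (by injectivity of $g$), enumerate $S=\{i_0<i_1<\cdots\}$ and build $z$ inductively via Fubini: at stage $n$, choose $z(g(i_n,x(i_n)))$ in the fiber $S_{i_n,x(i_n)}\subseteq\omega$ defining $K_{i_n,x(i_n)}$ so that the section of $N$ over the already-determined coordinates remains null---this is possible because the fiber has positive measure and almost every section of a null set is null. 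With suitably decreasing $\Lb_\omega(V_i)$ (say $\Lb_\omega(V_i)<2^{-i}$) and a density or Baire-category refinement at the end of the construction, the limiting $z$ lies outside $N$, which completes the argument.
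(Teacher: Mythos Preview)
Your forward map $\Psi_-$ agrees with the paper's, but the backward map $\Psi_+$ is defined very differently, and the verification of the Tukey condition has a genuine gap. The inductive Fubini construction only maintains ``the section of $N$ over the finitely many fixed coordinates is null'' at each stage; nothing passes to the limit after infinitely many coordinates have been fixed. Concretely, take $N:=\Psi_-(x)$ itself and choose $V_i:=\bigcup_{j>i}K_{j,x(j)}$: this is open, contains $N$, has measure as small as you like for large $i$, and since $K_{i,x(i)}$ depends on the coordinate $g(i,x(i))$ while $V_i$ depends only on the disjoint coordinates $\{g(j,x(j)):j>i\}$, one has $K_{i,x(i)}\not\subseteq V_i$ for every large $i$. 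Hence $S$ is infinite, yet \emph{every} $z$ lying in $K_{i,x(i)}$ for infinitely many $i$ is by definition in $\Psi_-(x)=N$. So the $z$ you are trying to build cannot exist, and with this legitimate choice of the $V_i$ your $\varphi_N$ fails the Tukey condition for $x$. The vague ``density or Baire-category refinement at the end'' cannot repair this, because the target statement is simply false for these data.

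The paper's argument goes in the opposite direction: rather than approximating $N$ from outside by open sets, it fixes a compact set $K_B\subseteq\baire\smallsetminus N$ of positive measure in which every relatively open set has positive measure, sets $g^B_s(i):=\{j:K_B\cap[s]\cap K_{i,j}=\emptyset\}$ for $s$ in the tree of $K_B$, and uses independence plus the estimate $1-t\le e^{-t}$ to show $\sum_i|g^B_s(i)|/h(i)<\infty$. The slalom $\varphi^B$ is then a suitable union of the $g^B_s$. The Tukey condition is verified not by building a point but via the Baire category theorem inside $K_B$: if $\Psi_-(x)\subseteq N$ then $K_B\cap\bigcap_m\bigcup_{i\ge m}K_{i,x(i)}=\emptyset$, so some tail union is non-dense in $K_B$, yielding an $s$ with $K_B\cap[s]\cap K_{i,x(i)}=\emptyset$ for all large $i$, i.e.\ $x(i)\in g^B_s(i)\subseteq\varphi^B(i)$. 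This use of a ``thick'' compact set in the complement, together with Baire category, is the missing idea in your attempt.
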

\begin{proof}
 By~\autoref{lem:indp}, there are $\la K_{i,j}:\, i,j\in\omega\ra$ clopen in $\baire$ such that $\Lb_\omega(K_{i,j})=\frac{1}{h(i)}$ and $\Lb_\omega\Big(\bigcap_{(i,j)\in F} K_{i,j}\Big)=\prod_{(i,j)\in F}\Lb_\omega(K_{i,j})$ for any finite $F\subseteq\omega\times\omega$.

Define $F \colon \baire\to\Nwf(\baire)$ by $F(x):=\bigcap_m\bigcup_{n\geq m}K_{n,x(n)}$. We claim that $F(x)$ has measure zero. Indeed,
\[
  \Lb_\omega(F(x))\leq\Lb_\omega\bigg(\bigcup_{n\geq m}K_{n,x(n)}\bigg)\leq\sum_{n\geq m}\Lb_\omega(K_{n,x(n)})=\sum_{n\geq m}\frac{1}{h(n)}\to 0 \text{ when $m\to\infty$.}
\]

Construct $G\colon \Nwf\to\Swf(\omega,h)$ as follows. Let $B\in\Nwf$. We use the following fact, for which we omit the proof.

\begin{fct}\label{Bartfct1}
  If $A\in\Bwf(\baire)$ and $\Lb_\omega(A)>0$ then there is some compact $K\subseteq A$
  such that $\Lb_\omega(K)>0$ and $U\cap K\neq\emptyset\imp\Lb_\omega(U\cap K)>0$ for any open $U\subseteq\baire$.
\end{fct}

Choose $K_B\subseteq\baire\menos B$ as in~\autoref{Bartfct1}. Let $T^B:=\{s\in\omega^{<\omega}:\, K_B\cap[s]\neq\emptyset\}$ and set, for $s\in T^B$,
$g^B_s(i):=\{j\in\omega:\, K_B\cap[s]\cap K_{i,j}=\emptyset\}$. Then, for any $s\in T^B$,
\begin{align*} 
   0 & <  \Lb_\omega(K_B\cap[s])\leq\Lb_\omega\left(\bigcap_{i\in\omega}\bigcap_{j\in g^B_s(i)}(\baire\menos K_{i,j})\right) & \\
    & = \prod_i\prod_{j\in g^B_s(i)}\Lb_\omega(\baire\menos K_{i,j}) & \text{by independence,}\\
    & =\prod_i\prod_{j\in g^B_s(i)}\Big(1-\frac{1}{h(i)}\Big)=\prod_i\Big(1-\frac{1}{h(i)}\Big)^{|g^B_s(i)|} & \\
    & \leq\prod_i e^{-\frac{|g^B_s(i)|}{h(i)}} & \text{(using $1+x\leq e^x$)}\\
    & =e^{-\sum_i \frac{|g^B_s(i)|}{h(i)}}. &
\end{align*}
Hence $0<e^{-\sum_i \frac{|g^B_s(i)|}{h(i)}}$, which implies $\sum_{i<\omega} \frac{|g^B_s(i)|}{h(i)}<\infty$.

Fix a sequence $\la \varepsilon_s:\, s\in T^B \ra$ of positive reals such that $\sum_{s\in T^B}\varepsilon_s<1$. 
Define $f^B\in\omega^{T^B}$ such that $\sum_{i\geq f^B(s)}\frac{|g^B_s(i)|}{h(i)}< \varepsilon_s$, and define $G(B):=\varphi^B$ by 
\[\varphi^B(i):=\bigcup\{g^B_s(i):\, f^B(s)\leq i,\ s\in T^B\}.\] 
Note that $\varphi^B\in \Swf(\omega,h)$ because
\[\begin{split}
  |\varphi^B(i)| & \leq\sum\{|g^B_s(i)|:\, f^B(s)\leq i,\ s\in T^B\} \\
   & \leq h(i)\sum\{\varepsilon_s:\, f^B(s)\leq i,\ s\in T^B\}
    \leq h(i).
\end{split}\]

We finally show that,
If $x\in\baire$, $B\in\Nwf$ and $F(x)\subseteq B$ then $x\in^* G(B)=\varphi^B$.

Since $K_B\cap B=\emptyset$, $K_B\cap F(x)=\emptyset=K_B\cap\bigcap_m\bigcup_{i\geq m}K_{i,x(i)}$. Hence $\bigcap_m\bigcup_{i\geq m}(K_B\cap K_{i,x(i)})=\emptyset$. Then,
by Baire Category Theorem, $\bigcup_{i\geq m}(K_B\cap K_{i,x(i)})$ is not dense in $K_B$ for some $m$,
so there is some $s\in T^B$ such that $[s]\cap\bigcup_{i\geq m}(K_B\cap K_{i,x(i)})=\emptyset$. Hence
$K_B\cap[s]\cap K_{i,x(i)}=\emptyset$ for all $i\geq m$, so
$x(i)\in g^B_s(i)\subseteq\varphi^B(i)$ for all $i\geq\max\{m,f^B(s)\}$.
\end{proof}

For the converse Tukey connection of \autoref{loc-antloc},
we need some results about the coding of $G_\delta$ measure zero subsets of $\prod b$, with all $0<b(n)\leq\omega$ ordinals, which we shall state as follows.

\begin{definition}\label{def:codenull}
  Denote $\cc_b:=[\Seqw b]^{<\aleph_0}$. For each $p\in\cc_b$ define $\cp(p):=\bigcup_{s\in p}[s]$, which is a clopen subset of $\Seqw b$.
  
  Define $\nc_b$ as the collection of all $\bar p=\la p_n:\, n<\omega\ra\in\cc_b^\omega$ such that $N(\bar p)$ has measure zero in $\prod b$, where
  \[N(\bar p):=\bigcap_{n<\omega}\bigcup_{i\geq n}\cp(p_i)=\set{x\in \prod b}{|\set{i<\omega}{ x\in\cp(p_i)}|=\aleph_0}.\]
\end{definition}

\begin{fct}\label{ex:codenull}
\ 
\begin{enumerate}[label=\rm(\alph*)]
    %\item Any compact (cl)open subset of $\prod b$ has the form $\cp(p)$ for some $p\in\cc_b$.
    \item For $\bar p\in\cc_b^\omega$, $\bar p\in\nc_b$ iff $\forall\, \varepsilon\in \Q^+\ \exists\, N<\omega\ \forall\, n\geq N\colon \Lb_b\Big(\bigcup_{N\leq i<n}\cp(p_i)\Big)<\varepsilon$.
    \item The set $\nc_b$ is Borel in $\cc^\omega_b$, where the latter space has the product topology with $\cc_b$ discrete.
    \item\label{ex:codenulld} For $\bar p\in\cc_b^\omega$, if $\sum_{n<\omega}\Lb_b(\cp(p_n))$ converges, then $\bar p\in\nc_b$.
\end{enumerate}
\end{fct}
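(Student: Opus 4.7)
The three parts are standard facts that follow from continuity of the measure and Borel--Cantelli. Here is how I would organize them.

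\textbf{Part (a).} The plan is to pass through the simpler equivalent $\lim_{n\to\infty}\Lb_b\bigl(\bigcup_{i\geq n}\cp(p_i)\bigr)=0$. Set $A_n:=\bigcup_{i\geq n}\cp(p_i)$; this is a decreasing sequence of Borel sets with $\Lb_b(A_n)\leq 1<\infty$, and $N(\bar p)=\bigcap_n A_n$. Continuity from above of $\Lb_b$ gives $\Lb_b(N(\bar p))=\lim_n \Lb_b(A_n)$, so $\bar p\in \nc_b$ iff this limit is $0$, equivalently $\forall\varepsilon\in\Q^+\ \exists N\colon \Lb_b(A_N)<\varepsilon$. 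Now the sets $B_{N,n}:=\bigcup_{N\leq i<n}\cp(p_i)$ increase to $A_N$ as $n\to\infty$, so continuity from below gives $\Lb_b(A_N)=\sup_n\Lb_b(B_{N,n})$. From this, the condition $\Lb_b(A_N)<\varepsilon$ is equivalent to $\forall n\geq N\colon \Lb_b(B_{N,n})<\varepsilon$ (one direction by monotonicity; the other by passing to the limit and using $\varepsilon/2$ to recover strict inequality). This yields the stated characterization.

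\textbf{Part (b).} The approach is to rewrite the condition of (a) explicitly as a Borel formula on $\cc_b^\omega$. For fixed $N,n<\omega$ and $\varepsilon\in\Q^+$, the value $\Lb_b(B_{N,n})$ depends only on the finitely many coordinates $p_N,\ldots,p_{n-1}$, so the predicate $\Lb_b(B_{N,n})<\varepsilon$ is clopen in $\cc_b^\omega$ (equipped with the product of the discrete topology on $\cc_b$). Hence
\[
\nc_b=\bigcap_{\varepsilon\in\Q^+}\bigcup_{N<\omega}\bigcap_{n\geq N}\bigl\{\bar p\in \cc_b^\omega :\Lb_b(B_{N,n})<\varepsilon\bigr\}
\]
is a Borel set (in fact $\boldsymbol{\Pi}^0_3$).

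\textbf{Part (c).} Here the idea is a one-line Borel--Cantelli estimate combined with (a). If $\sum_{i<\omega}\Lb_b(\cp(p_i))$ converges, then for each $\varepsilon\in \Q^+$ pick $N$ with $\sum_{i\geq N}\Lb_b(\cp(p_i))<\varepsilon$. For every $n\geq N$, subadditivity gives
\[
\Lb_b(B_{N,n})\leq \sum_{i=N}^{n-1}\Lb_b(\cp(p_i))\leq \sum_{i\geq N}\Lb_b(\cp(p_i))<\varepsilon,
\]
so the criterion of (a) applies and $\bar p\in\nc_b$.

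No step poses a real obstacle; the only tiny subtlety is keeping the strict inequality in (a), which is handled by shrinking $\varepsilon$ to $\varepsilon/2$ when converting between the ``$\Lb_b(A_N)<\varepsilon$'' and ``$\forall n\geq N\colon \Lb_b(B_{N,n})<\varepsilon$'' formulations.
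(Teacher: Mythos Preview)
Your argument is correct. The paper does not give a proof of this Fact at all; it is stated as a routine observation and left to the reader. Your write-up supplies exactly the standard details one would expect: continuity of measure from above and below for (a), recognizing the condition in (a) as a countable Boolean combination of clopen conditions for (b), and the Borel--Cantelli estimate for (c). The handling of the strict inequality via $\varepsilon/2$ is the right fix for the only delicate point.
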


\begin{lemma}\label{codenull}
Let $\bar\varepsilon=\la\varepsilon_n:\, n<\omega\ra$ be a sequence of positive reals. For any $X\in\Nwf(\prod b)$ there is some $\bar p\in\nc_b$ such that $X\subseteq N(\bar p)$ and $\Lb_b(\cp(p_n))<\varepsilon_n$ for all $n<\omega$.
\end{lemma}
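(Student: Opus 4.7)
The first move is to strengthen $\bar\varepsilon$: replace each $\varepsilon_n$ by $\min\{\varepsilon_n,2^{-(n+1)}\}$ if necessary so that $\sum_{n<\omega}\varepsilon_n<\infty$. Then, by \autoref{ex:codenull}\ref{ex:codenulld}, any $\bar p\in\cc_b^\omega$ satisfying $\Lb_b(\cp(p_n))<\varepsilon_n$ for all $n$ will automatically satisfy $\bar p\in\nc_b$, so the main task reduces to producing finite $p_n\in\cc_b$ with $\Lb_b(\cp(p_n))<\varepsilon_n$ and with the property that every $x\in X$ lies in $\cp(p_n)$ for infinitely many $n$.

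To ensure the infinite coverage, I would use a double-indexing scheme. Fix a bijection $\pi\colon\omega\to\omega\times\omega$ with infinite horizontal fibers, and for each $m<\omega$ write $\pi^{-1}(\{m\}\times\omega)=\{n^m_j:j<\omega\}$ listed in increasing order. For each $m$, use that $X$ is null to choose an open $U_m\supseteq X$ with $\Lb_b(U_m)<\varepsilon_{n^m_0}/2$. Decompose $U_m$ into a countable disjoint union of basic clopens and regroup these into finite chunks $G^m_j$ ($j<\omega$), one for each bin index, so that $\Lb_b(\bigcup G^m_j)<\varepsilon_{n^m_j}$ for every $j$ and $\bigcup_j\bigcup G^m_j=U_m$. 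Setting $p_{n^m_j}:=\{s:[s]\in G^m_j\}$ then produces a finite $p_{n^m_j}\in\cc_b$ with $\Lb_b(\cp(p_{n^m_j}))<\varepsilon_{n^m_j}$; for any $x\in X$ and any $m$, the inclusion $x\in U_m=\bigcup_j\cp(p_{n^m_j})$ forces $x\in\cp(p_{n^m_j})$ for some $j$, and letting $m$ range over $\omega$ gives infinitely many indices $n$ with $x\in\cp(p_n)$, so $X\subseteq N(\bar p)$.

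The main technical obstacle is carrying out this regrouping step. The issue is that some basic clopen in the decomposition of $U_m$ may have measure larger than the capacity $\varepsilon_{n^m_j}$ of the bin it should be placed in, so we must be prepared to refine it (by extending its defining node and splitting into sub-basic-clopens) into smaller pieces. A greedy packing argument handles this: process the basic clopens one at a time, refine on demand whenever a piece is too large for the current bin to accept, and close the current bin whenever its occupied measure is close enough to the capacity. This terminates in a valid assignment because basic clopens can be refined to have measure below any preassigned positive threshold and because the total capacity $\sum_j\varepsilon_{n^m_j}\geq\varepsilon_{n^m_0}$ comfortably exceeds $\Lb_b(U_m)<\varepsilon_{n^m_0}/2$.
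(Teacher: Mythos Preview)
Your approach is correct and is essentially the same as the paper's: cover $X$ by open sets $U_m$ of rapidly shrinking measure, chop each $U_m$ into finitely many clopen pieces, and distribute the pieces among the $p_n$ via a bijection of $\omega$ with $\omega\times\omega$ (the paper uses the specific diagonal assignment $p_n:=\bigcup_{m\leq n}\{s_{m,i}:f_m(n-m)\leq i<f_m(n-m+1)\}$, which amounts to sending piece $k$ of $U_m$ to index $m+k$).

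One remark: the ``main technical obstacle'' you describe is not actually an obstacle, and the greedy packing with on-demand refinement is unnecessary. Once you write $U_m=\bigcup_{i<\omega}[s_{m,i}]$ with $\sum_i\Lb_b([s_{m,i}])<\varepsilon_{n^m_0}/2$, the tail sums $\sum_{i\geq k}\Lb_b([s_{m,i}])$ tend to $0$, so you can simply choose $0=k_0<k_1<\cdots$ with $\sum_{i\geq k_j}\Lb_b([s_{m,i}])<\varepsilon_{n^m_j}$ for $j\geq 1$ and set $G^m_j:=\{[s_{m,i}]:k_j\leq i<k_{j+1}\}$. No individual clopen ever needs to be split. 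This is exactly what the paper does (with the choice $\sum_i\Lb_b([s_{m,i}])<2^{-m}\varepsilon_m$ and cut points $f_m(k)$ chosen so that $\sum_{i\geq f_m(k)}\Lb_b([s_{m,i}])<2^{-(m+k)}\varepsilon_{m+k}$).
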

\begin{proof}
Wlog, by reducing the values $\varepsilon_n$, we may assume that $\sum_{n<\omega}\varepsilon_n$ converges. Let $X\subseteq \prod b$ be of measure zero, so, for each $m<\omega$, there is some $\la s_{m,i}:\, i<\omega\ra\in(\Seqw b)^\omega$ such that $X\subseteq \bigcup_{i<\omega}[s_{m,i}]$ and $\sum_{i<\omega}\Lb([s_{m,i}])<2^{-m}\varepsilon_m$. Then, find an increasing function $f_m\in\omega^\omega$ with $f_m(0)=0$ such that $\sum_{i\geq f_m(k)}\Lb([s_{m,i}])<2^{-(m+k)}\varepsilon_{m+k}$ for all $0<k<\omega$.

For each $n<\omega$ define $p_n:=\bigcup_{m\leq n}\{[s_{m,i}]:\, f_m(n-m)\leq i<f_m(n-m+1)\}$. It is routine to check that $\bar p:=\la p_n:\, n<\omega\ra$ is as desired, in particular, $\bar p\in\nc_b$ by \autoref{ex:codenull}~\ref{ex:codenulld}.
\end{proof}

Now we can complete the proof of \autoref{loc-antloc}.

\begin{lemma}
Assume that $\sum_{n<\omega}\frac{1}{h(n)}<\infty$. Then $\Nwf(2^\omega)\leqT\Lc(\omega,h)$. 
\end{lemma}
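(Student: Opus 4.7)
The plan is to build a Tukey connection $(\Psi_-,\Psi_+)\colon\Nwf(2^\omega)\to\Lc(\omega,h)$ directly, by encoding each null set through the coding lemma (\autoref{codenull}) and decoding slaloms into covering null sets. First I would choose a sequence $\la\varepsilon_n:\,n<\omega\ra$ of positive rationals such that $\sum_{n<\omega} h(n)\varepsilon_n<\infty$; for instance $\varepsilon_n:=\frac{1}{h(n)^2}$ works, since by the Cauchy--Schwarz inequality (or simply $h(n)\varepsilon_n=\frac{1}{h(n)}$) this sum converges from the hypothesis $\sum \frac{1}{h(n)}<\infty$. For each $n<\omega$, since there are only countably many clopen subsets of $2^\omega$, fix an enumeration $\{C^n_k:\,k<\omega\}$ of all clopen subsets of $2^\omega$ of measure strictly less than $\varepsilon_n$.

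Next I would define $\Psi_-\colon \Nwf(2^\omega)\to\omega^\omega$ as follows. Given $X\in\Nwf(2^\omega)$, apply \autoref{codenull} (with the sequence $\bar\varepsilon$) to obtain $\bar p=\la p_n:\,n<\omega\ra\in\nc_2$ such that $X\subseteq N(\bar p)$ and $\Lb_2(\cp(p_n))<\varepsilon_n$ for every $n$. Since each $\cp(p_n)$ is clopen with measure below $\varepsilon_n$, it equals $C^n_{k_n}$ for some $k_n<\omega$; set $\Psi_-(X)(n):=k_n$. On the other side, define $\Psi_+\colon\Scal(\omega,h)\to\Nwf(2^\omega)$ by
\[
   \Psi_+(\varphi):=\bigcap_{N<\omega}\bigcup_{n\geq N}\bigcup_{k\in\varphi(n)}C^n_k.
\]
This is a null set: for each $N<\omega$,
\[
   \Lb_2\Bigg(\bigcup_{n\geq N}\bigcup_{k\in\varphi(n)}C^n_k\Bigg)\leq\sum_{n\geq N}|\varphi(n)|\,\varepsilon_n\leq\sum_{n\geq N}h(n)\varepsilon_n,
\]
and the right-hand side tends to $0$ as $N\to\infty$.

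To verify the Tukey condition, suppose $\Psi_-(X)\in^*\varphi$, i.e.\ $k_n\in\varphi(n)$ for all $n\geq N_0$ for some $N_0$. Then for each such $n$, $\cp(p_n)=C^n_{k_n}\subseteq \bigcup_{k\in\varphi(n)}C^n_k$, so $\bigcup_{n\geq N}\cp(p_n)\subseteq\bigcup_{n\geq N}\bigcup_{k\in\varphi(n)}C^n_k$ for every $N\geq N_0$, and intersecting over $N$ gives $N(\bar p)\subseteq \Psi_+(\varphi)$. Since $X\subseteq N(\bar p)$, we conclude $X\subseteq\Psi_+(\varphi)$, as required.

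The only delicate point is balancing the bounds on the two sides: the $\varepsilon_n$ must be small enough that $\sum h(n)\varepsilon_n$ converges (so that $\Psi_+(\varphi)$ is genuinely null) while staying positive so that \autoref{codenull} is applicable to every null set. The summability hypothesis $\sum\frac{1}{h(n)}<\infty$ is tailored precisely for this balance, so the argument reduces to a single well-chosen sequence $\bar\varepsilon$ together with the coding lemma; no further combinatorial input is needed.
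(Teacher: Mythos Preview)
Your proof is correct and follows essentially the same approach as the paper's: both use \autoref{codenull} to encode a null set by a sequence of small clopen pieces, and decode a slalom into a null set by taking the $\limsup$ of the unions of the indexed clopens. The only cosmetic differences are that the paper works with $\Lc(b,h)$ for $b(n)=\{p\in\cc_2:\Lb_2(\cp(p))\leq 2^{-(n+1)}(h(n)+1)^{-1}\}$ (and then invokes $\Lc(b,h)\eqT\Lc(\omega,h)$) instead of fixing an enumeration of small clopens, and uses the threshold $\varepsilon_n=2^{-(n+1)}(h(n)+1)^{-1}$ rather than your $\varepsilon_n=h(n)^{-2}$; both choices make $\sum h(n)\varepsilon_n$ converge, which is all that is needed.
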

\begin{proof}
%\red{We already know $\Lc(\omega,h)\leqT\Nwf=\Nwf(\baire)$ when $\sum_{n<\omega}\frac{1}{h(n)}$ converges by~\autoref{Bartone}, but thanks to \autoref{Lcomegah} this is also valid when $h$ just diverges to infinity.}

To define Tukey connection, consider 
\[b(n):=\bigg\{p\in\cc_2:\, \Lb_2(\cp(p))\leq\frac{1}{2^{n+1}(h(n)+1)}\bigg\},\] 
which has size $\aleph_0$. Hence $\Lc(b,h)\eqT\Lc(\omega,h)$. Define $\Psi_-\colon\Nwf\to\prod b$ where $\Psi_-(X)\in \nc_2$ is obtained as in \autoref{codenull}, and define $\Psi_+\colon\Swf(b,h)\to\Nwf$ such that, for $\varphi\in\Swf(b,h)$, $\Psi_+(\varphi):=N(\bar p^\varphi)$ where $p^\varphi_n:=\bigcup\varphi(n)$. Routine calculations show that $\Lb(\cp(p^\varphi_n))\leq 2^{-(n+1)}$, so $G(\varphi)\in\Nwf$ by \autoref{ex:codenull}~\ref{ex:codenulld}. It is clear that $\Psi_-(X)\in^*\varphi$ implies $X\subseteq \Psi_+(\varphi)$.
\end{proof}

The following is another characterization of $\add(\Nwf)$ and $\cof(\Nwf)$ in terms of Localization cardinals for $b\in\omega^\omega$, which is just a reformulation of \autoref{Lcomega}.

\begin{theorem}\label{ChAdd}
   $\add(\Nwf)=\min\{\bfrak,\minLc\}$ and $\cof(\Nwf)=\max\{\dfrak,\supLc\}$.
\end{theorem}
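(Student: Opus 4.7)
The plan is to simply chain together three results already established in the excerpt, since the statement is advertised as a reformulation of \autoref{Lcomega}.

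First, I would fix $h=\id_\omega$ (which diverges to infinity and satisfies $h\geq^*1$). By \autoref{loc-antloc}, $\Lc(\omega,h)\eqT\Nwf$, so by \autoref{cor:Tukeyval} we obtain
\[\add(\Nwf)=\blc_{\omega,h}\quad\text{and}\quad\cof(\Nwf)=\dlc_{\omega,h}.\]

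Next, I would invoke \autoref{Lcomega} to rewrite the right-hand sides as
\[\blc_{\omega,h}=\min\{\bfrak,\minLc_h\}\quad\text{and}\quad\dlc_{\omega,h}=\max\{\dfrak,\supLc_h\}.\]

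Finally, since $h=\id_\omega$ diverges to infinity, \autoref{Lcsup}\ref{Lcsupone} yields $\minLc_h=\minLc$ and $\supLc_h=\supLc$ (indeed, by the conventions from \autoref{DefminLc}, these are the definitions of $\minLc$ and $\supLc$). Substituting gives the desired equalities
\[\add(\Nwf)=\min\{\bfrak,\minLc\}\quad\text{and}\quad\cof(\Nwf)=\max\{\dfrak,\supLc\}.\]

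There is no genuine obstacle here since all the ingredients are in place; the only thing to be careful about is verifying that the chosen $h$ meets the hypotheses of all three theorems simultaneously, namely that $h$ diverges to infinity (needed for \autoref{loc-antloc} and \autoref{Lcsup}\ref{Lcsupone}) and matches the convention defining $\minLc$ and $\supLc$ (the case $h=\id_\omega$ is the suppressed default in \autoref{DefminLc}).
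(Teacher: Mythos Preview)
Your proposal is correct and is exactly what the paper intends: the theorem is stated without proof as ``just a reformulation of \autoref{Lcomega}'', and your chain \autoref{loc-antloc} $\to$ \autoref{Lcomega} $\to$ \autoref{DefminLc} (with $h=\id_\omega$) is precisely that reformulation. Your observation that invoking \autoref{Lcsup}\ref{Lcsupone} is unnecessary---since $\minLc$ and $\supLc$ are \emph{defined} as $\minLc_{\id_\omega}$ and $\supLc_{\id_\omega}$---is also right.
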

% \begin{proof}
%     In this proof, we use the characterization of $\add(\Nwf)$ and $\cof(\Nwf)$ given in~\autoref{loc-antloc}.

%     %Fix an increasing cofinal sequence $\la\mu_n:n<\omega\ra$ of $\kappa$.
%     Assume that $F\subseteq\omega^\omega$ and $|F|<\min\{\bfrak,\minLc\}$. Therefore, there is some $d\in\omega^\omega$ such that, for every $x\in F$, $\forall^\infty i<\omega(x(i)<d(i))$. On the other hand, as $|F|<\blc_{d,\id_\omega}$, we can find an slalom in $\Swf(d,\id_\omega)\subseteq\Swf(\omega,\id_\omega)$ that localizes all the reals in $F$ (just use a family $F'\subseteq\prod d$ of the same size as $F$ such that each member of $F'$ is a finite modification of some member of $F$ and viceversa). Therefore, $\add(\Nwf)\geq\min\{\bfrak,\minLc\})$.

%     Now we prove $\cof(\Nwf)\leq\max\{\dfrak,\supLc\}$. Choose a dominating family $D\subseteq\omega^\omega$ of size $\dfrak$ and, for each $d\in D$, choose  a family of slaloms $S_d\subseteq\Swf(d,\id_\omega)$ that witnesses $\dlc_{d,\id_\omega}$. Note that $S:=\bigcup_{d\in D}S_d\subseteq\Swf(\omega,\id_\omega)$ has size $\leq\max\{\dfrak,\supLc\}$ and that every real in $\omega^\omega$ is localized by some slalom in $S$, so $\cof(\Nwf)\leq|S|$.

%     It is easy to see that $\Dbf\leqT\Lc(\omega,\id_\omega)$, so
%     $\add(\Nwf)\leq\bfrak$ and $\dfrak\leq\cof(\Nwf)$. On the other hand, as $\add(\Nwf)\leq\blc_{b,\id_\omega}$ and $\dlc_{b,\id_\omega}\leq\cof(\Nwf)$ for any $b\in\omega^\omega$ (see~\autoref{ex:easylc}), the converse inequalities follow.
% \end{proof}

\section{Category and antilocalization}\label{MaLc}

In this section, we present the connections of anti-localization cardinals with the cardinal characteristics associated with the meager ideal of the reals.
The main result in this sense is the following theorem.
%We first look at the well-known characterization of the covering and uniformity of the meager ideal. %, which is characterized by anti-localization cardinals. Concretely, we aim to proving the following theorem:

\begin{theorem}[{Miller~\cite{Mi1982}, Bartoszy\'nski~\cite{Ba1987}}]\label{BarMill}
If $h\geq^*1$, then $\balc_{\omega,h}=\non(\Mwf)$ and $\dalc_{\omega,h}=\cov(\Mwf)$.   
\end{theorem}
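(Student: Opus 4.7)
My plan is to apply \autoref{aLcEd}, which yields $\aLc(\omega,h)\eqT\Ed$ under the assumption $h\geq^*1$. Consequently $\balc_{\omega,h}=\bfrak(\Ed)$ and $\dalc_{\omega,h}=\dfrak(\Ed)$, so the theorem reduces to the classical identities of Bartoszy\'nski and Miller, $\bfrak(\Ed)=\non(\Mwf)$ and $\dfrak(\Ed)=\cov(\Mwf)$. I would establish each as a pair of opposite inequalities.

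The easy inequalities $\bfrak(\Ed)\leq\non(\Mwf)$ and $\cov(\Mwf)\leq\dfrak(\Ed)$ follow from the single Tukey reduction $\Cbf_\Mwf\leqT\Ed$ witnessed by $\Psi_-:=\id_{\baire}$ and $\Psi_+(y):=M_y:=\{x\in\baire:x\neq^\infty y\}$. Writing $M_y=\bigcup_{N<\omega}\{x:\forall\, n\geq N,\,x(n)\neq y(n)\}$ exhibits $M_y$ as $F_\sigma$, and each closed piece is nowhere dense because any basic open $[s]$ with $|s|\geq N$ admits the one-step extension of $s$ by the value $y(|s|)$, which violates the defining condition. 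Hence $M_y\in\Mwf$, and the Tukey implication $x\neq^\infty y\Rightarrow x\in M_y$ is immediate.

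The substantive inequalities $\non(\Mwf)\leq\bfrak(\Ed)$ and $\dfrak(\Ed)\leq\cov(\Mwf)$ rest on Bartoszy\'nski's combinatorial characterization of meager sets, which I would prove as a preliminary lemma: \emph{every meager $M\subseteq\baire$ is contained in a set of the form $M^*_{I,\rho}:=\{x\in\baire:\forall^\infty n,\,x\frestr I_n\neq\rho(n)\}$ for some $I\in\Ior$ and $\rho\in\prod_{n<\omega}\omega^{I_n}$.} This is established by fixing a decreasing sequence of open dense subsets of $\baire$ whose intersection avoids $M$, and then recursively choosing blocks $I_n$ and values $\rho(n)$ large enough so that every $x$ whose restriction to $I_n$ differs from $\rho(n)$ lies in the $n$-th dense open set.

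The main obstacle is then the transfer from block-wise disagreement $x\frestr I^M_n\neq\rho^M(n)$ to the pointwise eventual disagreement required by $\Ed$: naive concatenation of the $\rho^M(n)$ into one $y\in\baire$ does not suffice, because block-wise disagreement does not force coordinate-wise disagreement. The standard remedy is to pass through the coded Baire space $\prod b$ with $b(n):=\omega^{|I^M_n|}$, under the bijection $\omega^{I^M_n}\cong\omega$; in the coded picture, membership $x\in M^*_{I^M,\rho^M}$ becomes literally $\tilde x\neq^\infty\tilde\rho^M$. The Tukey equivalence $\Ior\eqT\baire$ from \autoref{Charb-d} is then invoked to absorb the dependence of the coding on $M$, so that a single function in $\baire$ governs the block partitions of $\cov(\Mwf)$-many meager sets simultaneously. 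Carrying out the bookkeeping produces, from a meager cover of size $\cov(\Mwf)$, an $\Ed$-dominating family of the same cardinality, and the dual argument produces, from an $\Ed$-unbounded family, a non-meager set of the same cardinality; together these complete the identities $\bfrak(\Ed)=\non(\Mwf)$ and $\dfrak(\Ed)=\cov(\Mwf)$, and hence the theorem.
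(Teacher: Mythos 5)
The reduction via \autoref{aLcEd} and the easy half ($\Cbf_\Mwf\leqT\Ed$ via $y\mapsto\{x:x\neq^\infty y\}$) are fine and agree with the paper. The genuine gap is in your treatment of the hard inequalities $\dfrak(\Ed)\leq\cov(\Mwf)$ and $\non(\Mwf)\leq\bfrak(\Ed)$, specifically in the step where you claim that $\Ior\eqT\baire$ lets ``a single function in $\baire$ govern the block partitions of $\cov(\Mwf)$-many meager sets simultaneously.'' Dominating $\cov(\Mwf)$-many interval partitions by one partition requires $\cov(\Mwf)<\bfrak$, which fails exactly in models where the inequality is nontrivial (e.g.\ in the Miller model $\bfrak=\cov(\Mwf)=\aleph_1<\dfrak$, and in the Cohen model $\cov(\Mwf)=\cfrak>\bfrak$); replacing the single partition by a dominating family inflates your candidate $\Ed$-dominating family to size $\dfrak$, which only gives the trivial bound $\dfrak(\Ed)\leq\dfrak$. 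Moreover, the per-$M$ coding $\omega^{I^M_n}\cong\omega$ turns block-disagreement into $\neq^\infty$ only in a coded space whose coding map on the challenge side depends on $M$, so it does not assemble into one Tukey morphism into $\Ed$. The paper resolves precisely this: the domination component is kept \emph{inside} the relational system, one proves $\big((\omega^\omega)^\perp;(\Ed^*)^\perp\big)^\perp\leqT\Cbf_\Mwf$ (with $\Ed^*$ demanding disagreement only on an infinite set of coordinates), and then \autoref{blascomp} together with the already-established easy inequalities $\bfrak\leq\bfrak(\Ed^*)$ and $\dfrak(\Ed^*)\leq\dfrak$ performs the absorption ($\bfrak\cdot\bfrak(\Ed^*)=\bfrak(\Ed^*)$, $\min\{\dfrak,\dfrak(\Ed^*)\}=\dfrak(\Ed^*)$); in addition the separate combinatorial lemma $\Ed\eqT\Ed^*$ (via finite partial functions) is what ultimately converts infinite-set/blockwise disagreement into genuine eventual difference. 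None of this is recoverable from ``carrying out the bookkeeping'' as you describe it, and it is exactly where the content of the Miller--Bartoszy\'nski theorem lies.

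A secondary problem is your preliminary lemma. As stated it is the chopped-real characterization in $\baire$, but the sketched construction (choose $I_n$, $\rho(n)$ so that \emph{every} history behaves well) only works when there are finitely many histories of length $\min I_n$, i.e.\ in $2^\omega$; this is why the paper's \autoref{aboutM} is formulated in Cantor space with variable-length binary strings. In $\baire$ the recursive step can provably fail: for the dense open set $D=\baire\smallsetminus\set{x}{\forall\, n\geq x(0)\colon x(n)=0}$ there is no block $[m,m')$ and word $\rho\in\omega^{[m,m')}$ with $[s^{\frown}\rho]\subseteq D$ for all $s\in\omega^m$, since one may take $s(0)>m'$ and extend $s^{\frown}\rho$ by zeros. (Also, your phrasing has the condition inverted: one needs that every $x$ whose restriction to $I_n$ \emph{agrees} with $\rho(n)$ lands in the $n$-th dense open set.) Whether or not the $\baire$ version of the covering lemma can be salvaged by a different argument, your proof of it does not go through, and in any case fixing it would not close the main gap described above.
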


Before plunging into the details of the proof, we introduce the following relational system for combinatorial purposes.

\begin{definition}\label{def:Ed*}
Define the relational system $\Ed^*_b=\la\prod b,[\omega]^{\aleph_0}\times\prod b,\eqcirc\ra$ where $x\eqcirc(w,y)$ iff $x(n)\neq y(n)$ for all but finitely many $n\in w$. Write $\Ed^*:=\Ed^*_\omega$.
\end{definition}

\autoref{BarMill} is settled thanks to~\autoref{aLcEd} and the following result. 

\begin{theorem}\label{Edchar} %[Bartoszy\'nski~1987, Miller~1982]
$\big((\omega^\omega)^\perp;(\Ed^*)^\perp\big)^\perp\leqT\Cbf_\Mwf\leqT\Ed\eqT \Ed^*\leqT\omega^\omega$. In particular, 
\[\bfrak(\Ed)=\bfrak(\Ed^*)=\non(\Mwf) \text{ and } \dfrak(\Ed)=\dfrak(\Ed^*)=\cov(\Mwf).\]
\end{theorem}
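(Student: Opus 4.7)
My plan is to establish the four Tukey links in the displayed chain independently and then deduce the cardinal equalities from them via~\autoref{blascomp} and~\autoref{cor:Tukeyval}. Three of the links admit short direct arguments. For $\Ed^*\leqT\omega^\omega$ take $\Psi_-(x)=x$ and $\Psi_+(y)=(\omega,y^+)$ with $y^+(n):=y(n)+1$; then $x\leq^*y$ forces $x(n)<y(n)+1$ for almost all $n$, hence $x\eqcirc(\omega,y^+)$. The easy half $\Ed^*\leqT\Ed$ of the equivalence uses $\Psi_-(x)=x$, $\Psi_+(y)=(\omega,y)$, since $x\neq^\infty y$ gives $x\eqcirc(\omega,y)$ tautologically. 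For $\Cbf_\Mwf\leqT\Ed$ set $\Psi_-(x)=x$ and $\Psi_+(y):=\{z\in\omega^\omega:z\neq^\infty y\}=\bigcup_{N<\omega}\{z:\forall\,n\geq N\colon z(n)\neq y(n)\}$, which is $F_\sigma$ meager (each piece is closed and nowhere dense, since any basic open $[s]$ admits an extension $s'\supseteq s$ with $s'(k)=y(k)$ for some $k\geq\max\{N,|s|\}$); the required Tukey implication is tautological.

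The two remaining connections are the technical heart. For $\Ed\leqT\Ed^*$ I fix a partition $\la I_n:n<\omega\ra$ of $\omega$ into finite intervals with $|I_n|\to\infty$ and, for each $n$, a bijection $\omega^{I_n}\leftrightarrow\omega$; set $\Psi_-(x)(n)$ to be the code of $x\frestr I_n$, and define $\Psi_+(w,y)\in\omega^\omega$ by a diagonal reading that, for each $k\in\omega$, selects an appropriate $n\in w$ with $k\in I_n$ and outputs the $k$-th coordinate of the decoding of $y(n)$, with a default on indices $k\in I_n$ whose block index $n$ lies outside $w$. The verification that $\Psi_-(x)\eqcirc(w,y)$ forces $x\neq^\infty\Psi_+(w,y)$ is the delicate combinatorial step and is the main obstacle of this direction. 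For the first link $\big((\omega^\omega)^\perp;(\Ed^*)^\perp\big)^\perp\leqT\Cbf_\Mwf$ I dualize via~\autoref{cor:Tukeyval} to $\Cbf_\Mwf^\perp\leqT\big((\omega^\omega)^\perp;(\Ed^*)^\perp\big)$ and invoke Bartoszy\'nski's characterization of meager sets in $\baire$: every meager $M\subseteq\baire$ is contained in some $\{z:\exists^\infty n\colon z\frestr J_n=s_n\}$ for an interval partition $\la J_n:n<\omega\ra$ and patterns $s_n\in\omega^{J_n}$. Encoding the partition by a right-endpoint function $x_M\in\omega^\omega$ (witnessing the $(\omega^\omega)^\perp$ coordinate) and the patterns by a map $g_M\colon\omega^\omega\to[\omega]^{\aleph_0}\times\omega^\omega$ extracting, for each test point $z$, a candidate index set and a target sequence coding the relevant $s_n$'s (witnessing the $(\Ed^*)^\perp$ coordinate), one verifies that any $z$ outside $M$ either escapes the partition ($z\not\leq^* x_M$) or is eventually different from the target on the candidate set. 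This Bartoszy\'nski coding is the principal obstacle of the proof.

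With the chain in hand the cardinal equalities follow by a direct computation. The chain gives $\bfrak\leq\bfrak(\Ed^*)=\bfrak(\Ed)\leq\non(\Mwf)$ and $\cov(\Mwf)\leq\dfrak(\Ed)=\dfrak(\Ed^*)\leq\dfrak$. Applying~\autoref{blascomp} to $\big((\omega^\omega)^\perp;(\Ed^*)^\perp\big)$ together with the identities $\bfrak(\Rbf^\perp)=\dfrak(\Rbf)$ and $\dfrak(\Rbf^\perp)=\bfrak(\Rbf)$ yields $\bfrak\big(((\omega^\omega)^\perp;(\Ed^*)^\perp)^\perp\big)=\bfrak\cdot\bfrak(\Ed^*)=\max\{\bfrak,\bfrak(\Ed^*)\}$ and $\dfrak\big(((\omega^\omega)^\perp;(\Ed^*)^\perp)^\perp\big)=\min\{\dfrak,\dfrak(\Ed^*)\}$, so the first link delivers $\non(\Mwf)\leq\max\{\bfrak,\bfrak(\Ed^*)\}$ and $\min\{\dfrak,\dfrak(\Ed^*)\}\leq\cov(\Mwf)$. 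Combined with the bounds $\bfrak\leq\non(\Mwf)$ and $\cov(\Mwf)\leq\dfrak$ already noted, the maximum and minimum collapse to $\bfrak(\Ed^*)$ and $\dfrak(\Ed^*)$ respectively, giving $\bfrak(\Ed)=\bfrak(\Ed^*)=\non(\Mwf)$ and $\dfrak(\Ed)=\dfrak(\Ed^*)=\cov(\Mwf)$.
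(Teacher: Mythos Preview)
Your treatment of the easy links ($\Cbf_\Mwf\leqT\Ed$, $\Ed^*\leqT\Ed$, $\Ed^*\leqT\omega^\omega$) and the derivation of the cardinal equalities from the Tukey chain via \autoref{blascomp} are correct and match the paper essentially verbatim.

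The genuine gap is your argument for $\Ed\leqT\Ed^*$. With your maps, $\Psi_-(x)\eqcirc(w,y)$ says that for cofinitely many $n\in w$ the block $x\frestr I_n$ differs from the decoded $s_n$ \emph{somewhere} in $I_n$, not everywhere; and for $n\notin w$ you have no control over $\Psi_+(w,y)$ at all. So from the hypothesis you can only conclude that $x$ and $\Psi_+(w,y)$ differ at \emph{at least one} coordinate in each such block, which is far from $x\neq^\infty\Psi_+(w,y)$. No ``diagonal reading'' repairs this: the target relation requires disagreement at \emph{all but finitely many} $k$, while block coding transfers a single coordinatewise disagreement into a single blockwise disagreement and back again. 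The paper's argument avoids this obstruction by passing to $\Ed_b$ with $b(n)=\set{s\in\Fn(\omega,\omega)}{|s|=n+1}$ (partial functions with growing domains, not blocks of a fixed partition) and building $F_x$ so that $F_x(m^x_j)=x_j(m^x_j)$ at pairwise distinct points $m^x_j\in\dom x_j$; the point is that when $x_j$ equals $G(w,y)(j)=y\frestr\{k^w_0,\ldots,k^w_j\}$, the point $m^x_j$ automatically lies in $w$, producing infinitely many coincidences of $F_x$ with $y$ on $w$. The growing, overlapping domains are essential and your interval partition does not provide them.

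Two further issues with the first link. Your stated characterization of meager sets is inverted: the set $\{z:\exists^\infty n\colon z\frestr J_n=s_n\}$ is comeager, so the containment you write is vacuous; the correct form (the paper's \autoref{aboutM}) is $M\subseteq\{z:\forall^\infty n\colon z\frestr J_n\neq s_n\}$. More substantively, your sketch ``encode the partition by $x_M$ and the patterns by $g_M$, then verify\ldots'' hides exactly the work the paper does carefully: one must replace $\omega^\omega$ by a Tukey-equivalent system $\Dbf$ (\autoref{otherD}) in which the dominating relation interacts correctly with interval partitions, and then build the maps so that failure of domination produces an explicit infinite set $w_{x,c}$ on which the patterns can be matched. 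Without these ingredients the ``one verifies'' step does not go through.
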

We split the proof of~\autoref{Edchar} in two parts:
\begin{enumerate}[label=\normalfont(\faBolt$_{\arabic*}$)]
    \item\label{EdcharA} $\Cbf_\Mwf\leqT\Ed\eqT \Ed^*\leqT\omega^\omega$.
    
    \item\label{EdcharB} $\big((\omega^\omega)^\perp;(\Ed^*)^\perp\big)^\perp\leqT\Cbf_\Mwf$.
\end{enumerate}

%Before proving~\ref{EdcharA} and~\ref{EdcharB}, notice that 
As a consequence of~\ref{EdcharA}, \[\bfrak\leq\bfrak(\Ed)=\bfrak(\Ed^*)\leq\non(\Mwf)\text{ and } \cov(\Mwf)\leq \dfrak(\Ed)=\dfrak(\Ed^*)\leq\dfrak.\]
On the other hand, let $\Rbf_0:=(\omega^\omega)^\perp$, $\Rbf_1:=(\Ed^*)^\perp$ and $\Rbf_*:=(\Rbf_0;\Rbf_1)$. Then, by employing~\autoref{blascomp},
\[
\begin{split}
    \dfrak(\Rbf_*^\perp) & = \bfrak(\Rbf_*)=\min\{\bfrak(\Rbf_0),\bfrak(\Rbf_1)\}=\min\{\dfrak,\dfrak(\Ed^*)\} \text{ and}\\ 
    \bfrak(\Rbf_*^\perp) & = \dfrak(\Rbf_*)=\dfrak(\Rbf_0)\cdot\dfrak(\Rbf_1)=\bfrak\cdot\bfrak(\Ed^*).
\end{split}
\]
This implies that $\dfrak(\Rbf_*^\perp)= \dfrak(\Ed^*)$ and $\bfrak(\Rbf_*^\perp)= \bfrak(\Ed^*)$. Therefore,~\ref{EdcharB} implies  $\dfrak(\Ed^*)\leq\cov(\Mwf)$ and $\non(\Mwf)\leq  \bfrak(\Ed^*)$, which establishes~\autoref{BarMill}. 

We now prove~\ref{EdcharA}. Denote by $\Fn(A,B)$ the set of finite partial functions from $A$ into $B$.

\begin{lemma}
$\Cbf_\Mwf\leqT\Ed\eqT \Ed^*\leqT\omega^\omega$
\end{lemma}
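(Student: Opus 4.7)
The plan is to split the chain into four Tukey reductions: (i) $\Cbf_\Mwf\leqT\Ed$, (ii) $\Ed\leqT\Ed^*$, (iii) $\Ed^*\leqT\Ed$, and (iv) $\Ed^*\leqT\omega^\omega$. The reductions (i), (iii) and (iv) are routine; the real technical content is in (ii).

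For (i), I will take $\Psi_-:=\id_{\baire}$ and $\Psi_+(y):=\set{x\in\baire}{x\neq^\infty y}$. The set $\Psi_+(y)$ decomposes as $\bigcup_{N<\omega}C_N(y)$ with $C_N(y):=\set{x\in\baire}{\forall n\geq N\colon x(n)\neq y(n)}$. Each $C_N(y)$ is closed and nowhere dense: given any basic clopen set $[s]$ with $|s|\geq N$, the extension $s^\frown\la y(|s|)\ra$ lies in $[s]\setminus C_N(y)$. Hence $\Psi_+(y)\in\Mwf$, and the Tukey implication $x\neq^\infty y\Rightarrow x\in\Psi_+(y)$ is tautological. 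For (iii), $\Psi_-(x):=x$ and $\Psi_+(y):=(\omega,y)$ work, since $x\neq^\infty y$ is literally $x\eqcirc(\omega,y)$. For (iv), $\Psi_-(x):=x$ and $\Psi_+(g):=(\omega,g+1)$ (with $(g+1)(n):=g(n)+1$) suffice: $x\leq^*g$ gives $x(n)\leq g(n)<(g+1)(n)$ cofinitely, i.e.\ $x\eqcirc(\omega,g+1)$.

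The main obstacle is (ii), $\Ed\leqT\Ed^*$. The plan is to exploit an interval partition $I=\la I_k:k<\omega\ra$ of $\omega$ with $|I_k|\to\infty$ together with a redundant encoding. I set $\Psi_-(x)(n):=x(k)$ whenever $n\in I_k$, so that $\Psi_-(x)$ is constant with value $x(k)$ on each $I_k$. Given $(w,z)\in[\omega]^{\aleph_0}\times\omega^\omega$, I define $\Psi_+(w,z)(k):=z(n_k)$ where $n_k$ is a canonical element of $w\cap I_k$ (e.g.\ its minimum) when $w\cap I_k\neq\varnothing$, and a default value (chosen from the structure of $(w,z)$) otherwise. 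If $\Psi_-(x)(n)\neq z(n)$ for all but finitely many $n\in w$, then for every large $k$ with $w\cap I_k\neq\varnothing$ the chosen $n_k$ lies past the finite exception, so $\Psi_+(w,z)(k)=z(n_k)\neq\Psi_-(x)(n_k)=x(k)$.

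The hard part is turning this into $x(k)\neq\Psi_+(w,z)(k)$ for \emph{cofinitely many} $k\in\omega$ (not only those with $w\cap I_k\neq\varnothing$), for an arbitrary infinite $w$; I expect this to be the main obstacle. I plan to handle this by choosing $|I_k|$ to grow fast enough that no infinite $w$ can avoid too many intervals, and by designing the default assignment on the indices $k$ with $w\cap I_k=\varnothing$ using auxiliary information extracted from $z$ (for instance, by reading off values of $z$ on $w\cap I_j$ for nearby $j$), so that the set of $k$ where $\Psi_+(w,z)(k)$ could coincide with $x(k)$ remains finite. Once this is arranged, the four reductions compose into the full chain claimed in the lemma.
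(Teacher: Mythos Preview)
Your treatments of (i), (iii), and (iv) are correct and match the paper's. The problem is (ii), where the approach you outline cannot be made to work.

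The obstacle you flag is fatal, not merely technical. No growth rate on $|I_k|$ helps: for \emph{any} interval partition $\la I_k:k<\omega\ra$, the set $w:=\set{\min I_{2j}}{j<\omega}$ is infinite yet $w\cap I_k=\varnothing$ for every odd $k$. Now fix this $w$, an arbitrary $z$, and look at the hypothesis $\Psi_-(x)\eqcirc(w,z)$. Since $\Psi_-(x)$ is constant with value $x(k)$ on each $I_k$, the hypothesis constrains only the values $x(k)$ with $w\cap I_k\neq\varnothing$; the values $x(k)$ for odd $k$ are completely free. Whatever ``default'' you assign to $\Psi_+(w,z)(k)$ at odd $k$ (a quantity depending only on $(w,z)$), an adversary can set $x(k)$ equal to that default for every odd $k$ while choosing $x(2j)\neq z(\min I_{2j})$ for all $j$; then $\Psi_-(x)\eqcirc(w,z)$ holds but $x=^\infty\Psi_+(w,z)$. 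The encoding $\Psi_-$ simply throws away all information about $x$ on coordinates whose intervals $w$ misses, and no amount of cleverness in $\Psi_+$ can recover it.

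The paper sidesteps this by first replacing $\Ed$ with $\Ed_b$ for $b(n):=\set{s\in\Fn(\omega,\omega)}{|s|=n+1}$ (each $b(n)$ is countably infinite, so $\Ed_b\eqT\Ed$) and then proving $\Ed_b\leqT\Ed^*$. For $(w,y)$ with $w=\{k^w_0<k^w_1<\cdots\}$ put $G(w,y)(n):=y\frestr\{k^w_0,\ldots,k^w_n\}\in b(n)$; for $x\in\prod b$ pick, for each $j$, a fresh point $m^x_j\in\dom x(j)\smallsetminus\{m^x_i:i<j\}$ (possible since $|\dom x(j)|=j+1$) and let $F(x)(m^x_j):=x(j)(m^x_j)$. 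If $x(j)=G(w,y)(j)$ for infinitely many $j$, then for those $j$ one has $m^x_j\in\dom x(j)\subseteq w$ and $F(x)(m^x_j)=y(m^x_j)$; since the $m^x_j$ are distinct, $F(x)$ agrees with $y$ at infinitely many points of $w$. The crucial difference from your attempt is that the domain of $x(j)$ is not fixed in advance, so $F(x)$ adapts to wherever $w$ happens to live.
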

\begin{proof}
 For $\Cbf_\Mwf\leqT\Ed$, work in $\omega^\omega$ and use the maps $x\mapsto x$ and $y\mapsto \{x\in\omega^\omega:\, x\neq^\infty y\}$ (this is a meager $F_\sigma$-set). The relation $\Ed^*\leqT \omega^\omega$ is also simple using the maps $x\mapsto x$ and $y\mapsto (\omega,y+1)$ (where $y+1$ is interpreted as coordinate-wise sum).

Now we show $\Ed\eqT\Ed^*$. The relation $\Ed^*\leqT\Ed$ is easy. For the converse, let $b(n):=\{s\in\Fn(\omega,\omega):\, |s|=n+1\}$. Since $|b(n)|=\aleph_0$ for all $n<\omega$, $\Ed_b\eqT\Ed$, so we show $\Ed_b\leqT\Ed^*$. Define $G\colon [\omega]^{\aleph_0}\times\omega^\omega\to \prod b$ by $G(w,y):=\big\la y\frestr\{k^w_i:\, i\leq n\}:\, n<\omega\big\ra$ where $w:=\{k^w_i:\, i<\omega\}$ is the increasing enumeration. The function $F\colon\prod b\to \omega^\omega$ is defined as follows: for $x\in\prod b$ define a sequence $\la m^x_j:\, j<\omega\ra$ such that $m^x_j\in\dom x_j\menos\{m^x_i:\, i<j\}$ (which is fine because $|\dom x_j|=j+1$), and let $F(x):=F_x\in\omega^\omega$ be any function such that $F_x(m^x_j)=x_j(m^x_j)$ for all $j<\omega$. It remains to show that $x=^\infty G(w,y)$ implies that $F_x(n)=y(n)$ for infinitely many $n\in w$. Note that $x=^\infty G(w,y)$ means that $x_j=y\frestr\{k^w_i:\, i\leq j\}$ for infinitely many $j$. For these $j$, the $m^x_j$ are pairwise different members of $w$ and $F_x(m^x_j)=x_j(m^x_j)=y(m^x_j)$, so $F_x$ and $y$ are equal at infinitely many points of $w$.   
\end{proof}

To prove~\ref{EdcharB} we must work a bit more. To this end, we use the following characterization of meager sets. 

\begin{lemma}\label{aboutM}
Define $b^*(n):=\bigcup_{k>n}2^{[n,k)}$ and $b^*:=\la b^*(n):\, n<\omega\ra$. Then:
\begin{enumerate}[label=\rm(\alph*)]
    \item For any $f\in\prod b^*$, $M^*_f:=\set{x\in 2^\omega}{|\set{n<\omega}{x\supseteq f(n)}|<\aleph_0}$ is $F_\sigma$ and meager.
    \item If $X\in\Mwf(2^\omega)$ then $X\subseteq M^*_f$ for some $f\in\prod b^*$.
    \item $\Mg^*\eqT\Cbf_\Mwf$ where $\Mg^*:=\la 2^\omega,\prod b^*,\sqsubset\ra$ and $x\sqsubset f$ is defined by $x\in M^*_f$.
\end{enumerate}
\end{lemma}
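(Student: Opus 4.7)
For part (a), the plan is to write $M^*_f = \bigcup_{N<\omega} A_N$ where $A_N := \{x \in 2^\omega : \forall n \geq N,\ x \not\supseteq f(n)\}$. Each set $\{x : x \supseteq f(n)\}$ is a clopen shifted cylinder of the form $\{x : x\restriction[n,k) = f(n)\}$, so its complement is closed, and $A_N$ is an intersection of such complements, hence closed; this exhibits $M^*_f$ as $F_\sigma$. To show meagerness it suffices to see each $A_N$ is nowhere dense: given a basic open $[s]$, I would pick $n \geq \max\{N,|s|\}$, take $k>n$ with $f(n) \in 2^{[n,k)}$, and extend $s$ to $s' \in 2^k$ by arbitrary padding on $[|s|,n)$ and by $f(n)$ on $[n,k)$; then $[s'] \subseteq [s]$ and every $x \in [s']$ satisfies $x \supseteq f(n)$, so $[s'] \cap A_N = \emptyset$.

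For part (b), starting from a meager $X$, I would write $X \subseteq \bigcup_{m<\omega} F_m$ with $(F_m)$ an increasing sequence of closed nowhere dense subsets of $2^\omega$. The core claim is that for every $n<\omega$ there exist $k>n$ and $f(n)\in 2^{[n,k)}$ such that no $x\in F_n$ has $x\restriction[n,k)=f(n)$; equivalently, the tail projection $\pi_{[n,\infty)}(F_n)$ is a proper subset of $2^{[n,\infty)}$. To prove this, I would decompose
\[
\pi_{[n,\infty)}(F_n)=\bigcup_{s\in 2^n} A_s, \quad A_s:=\{y\in 2^{[n,\infty)}: s^\frown y\in F_n\}.
\]
Each $A_s$ is closed (continuous preimage of the closed set $F_n$) and nowhere dense in $2^{[n,\infty)}$: otherwise $A_s$ would contain a basic open $[t]$ with $t\in 2^{[n,k)}$, forcing $[s^\frown t]\subseteq F_n$ and contradicting nowhere-density of $F_n$. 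A finite union of closed nowhere dense sets is closed nowhere dense, and thus a proper subset of $2^{[n,\infty)}$; by compactness there is a $k>n$ and $f(n)\in 2^{[n,k)}\smallsetminus\pi_{[n,k)}(F_n)$. With such an $f$, any $x\in X$ lies in some $F_m$, and monotonicity gives $x\in F_n$ for all $n\geq m$, so $x\not\supseteq f(n)$ for $n\geq m$; hence $x\in M^*_f$.

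For part (c), the Tukey equivalence is bookkeeping from (a) and (b). For $\Mg^*\leqT \Cbf_\Mwf$, take $\Psi_-$ the identity on $2^\omega$ and $\Psi_+(M):=f_M$, where $f_M\in\prod b^*$ is any witness from (b) with $M\subseteq M^*_{f_M}$; then $x\in M$ implies $x\in M^*_{f_M}$, i.e., $x\sqsubset f_M$. Conversely, for $\Cbf_\Mwf\leqT\Mg^*$, take $\Psi_-$ the identity and $\Psi_+(f):=M^*_f$, which is meager by (a), and note $x\sqsubset f$ is by definition $x\in M^*_f$.

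The main obstacle is the proper-projection claim underlying (b); once that is secured via the Baire-style argument above (writing $\pi_{[n,\infty)}(F_n)$ as a finite union of closed nowhere dense pieces indexed by $2^n$), everything else reduces to careful manipulation of the shifted cylinders $C_{f(n)}=\{x:x\restriction[n,k)=f(n)\}$.
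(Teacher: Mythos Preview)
Your proof is correct. The paper actually states this lemma without proof (it is the standard combinatorial characterization of meager subsets of $2^\omega$, see e.g.\ \cite[Thm.~2.2.4]{BJ}), so there is nothing to compare against; your argument is exactly the expected one. One minor remark on part~(b): the phrase ``by compactness'' is slightly off --- what you are really using is that the complement of the closed nowhere dense set $\pi_{[n,\infty)}(F_n)$ is open and nonempty, hence contains a basic clopen $[t]$ with $t\in 2^{[n,k)}$, and you set $f(n):=t$. Everything else is clean.
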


Since each $b^*(n)$ is infinite countable, we clearly have:

\begin{lemma}\label{ex:othercodemg}
Let $b^*$ as in~\autoref{aboutM}. Then $\Ed^*_{b^*}\eqT\Ed^*$.
\end{lemma}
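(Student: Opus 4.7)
The plan is to exploit the fact that all coordinates of $b^*$ have the same cardinality $\aleph_0$ as the coordinates of $\omega$, so a coordinate-wise bijection should carry the eventually-different structure from one space to the other with essentially no work. Since the relation $\eqcirc$ only looks at whether $x(n) = y(n)$ on an index set $w$, and bijections preserve equality and inequality, there is no obstruction at all; the argument is purely bookkeeping.

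Concretely, I would first fix, for each $n < \omega$, a bijection $\pi_n \colon b^*(n) \to \omega$ (possible because $b^*(n)$ is infinite countable). These induce a bijection $\Pi \colon \prod b^* \to \omega^\omega$ defined coordinate-wise by $\Pi(x)(n) := \pi_n(x(n))$, with inverse $\Pi^{-1}$ given similarly by $\pi_n^{-1}$. For the Tukey connection $\Ed^*_{b^*} \leqT \Ed^*$, I would set $\Psi_-(x) := \Pi(x)$ on the left-hand domain and $\Psi_+(w,y) := (w, \Pi^{-1}(y))$ on the right-hand domain. If $\Pi(x) \eqcirc (w,y)$, then $\pi_n(x(n)) \neq y(n)$ for all but finitely many $n \in w$; applying $\pi_n^{-1}$ preserves inequality, so $x(n) \neq \pi_n^{-1}(y(n)) = \Pi^{-1}(y)(n)$ for a.e.\ $n \in w$, i.e.\ $x \eqcirc \Psi_+(w,y)$.

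For the reverse direction $\Ed^* \leqT \Ed^*_{b^*}$, the construction is entirely symmetric: use $\Pi^{-1}$ on the left side and $\Pi$ on the right side, with the same verification. Together these give $\Ed^*_{b^*} \eqT \Ed^*$. The main step where one must be careful is ensuring the codomain on the right side of the Tukey connection matches — i.e.\ that the second coordinate of an element of $[\omega]^{\aleph_0} \times \prod b^*$ lies in $\prod b^*$, which is guaranteed since $\Pi^{-1}$ lands there — but beyond this the argument requires no real idea, since the trivial fact that bijections preserve $\neq$ does all the work.
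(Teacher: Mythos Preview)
Your proof is correct and is exactly the argument the paper has in mind: the paper does not even write out a proof, merely prefacing the lemma with ``Since each $b^*(n)$ is infinite countable, we clearly have'', which is precisely your coordinate-wise bijection argument.
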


The following lemma will also be useful.

\begin{lemma}\label{otherD}
The relational system $\Dbf:=\la W^*,\omega^\omega,\leq^+ \ra$ is Tukey equivalent with $\omega^\omega$, where $W^*$ is the collection of increasing functions in $\omega^\omega$ and $x\leq^+ y$ iff $x_{k+1}\leq y(x_k)$ for all but finitely many $k$.
\end{lemma}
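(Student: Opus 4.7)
The plan is to establish both Tukey connections $\Dbf \leqT \omega^\omega$ and $\omega^\omega \leqT \Dbf$ by explicit constructions, the first being straightforward and the second requiring a bit of monotone-envelope trickery.

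For $\Dbf \leqT \omega^\omega$, I would define $\Psi_-\colon W^*\to\omega^\omega$ by $\Psi_-(x)(k):=x(k+1)$ and $\Psi_+\colon\omega^\omega\to\omega^\omega$ by $\Psi_+(y)(n):=\max\{y(j):j\leq n\}$, i.e.\ the nondecreasing envelope. Using that every strictly increasing $x\in W^*$ satisfies $x(k)\geq k$, if $\Psi_-(x)\leq^*y$ then for almost every $k$ we get $x(k+1)=\Psi_-(x)(k)\leq y(k)\leq \Psi_+(y)(x(k))$, which is precisely $x\leq^+\Psi_+(y)$. This direction also recovers the folklore observation $\bfrak\leq\bfrak(\Dbf)$ and $\dfrak(\Dbf)\leq\dfrak$.

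For $\omega^\omega \leqT \Dbf$, given $g\in\omega^\omega$ I would first build a strictly increasing envelope $\hat g(n):=\max\{g(m):m\leq n\}+n+1$, and then define $\Psi_-(g)=x$ recursively by $x(0):=0$ and $x(k+1):=\hat g(x(k))$; note $x\in W^*$ since $\hat g(n)>n$. Given $y\in\omega^\omega$, let $\Psi_+(y)(n):=y^{*}(y^{*}(n))$ where $y^{*}(n):=\max\{y(j):j\leq n\}$. Assume $x\leq^+ y$ with $x(k+1)\leq y(x(k))$ for all $k\geq k_0$. For $n\geq x(k_0)$ choose $k\geq k_0$ with $x(k)\leq n<x(k+1)$; then $x(k+1)\leq y^{*}(x(k))\leq y^{*}(n)$, and consequently
\[g(n)\leq \hat g(n)\leq \hat g(x(k+1))=x(k+2)\leq y^{*}(x(k+1))\leq y^{*}(y^{*}(n))=\Psi_+(y)(n),\]
showing $g\leq^{*}\Psi_+(y)$.

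The main obstacle is the second direction: the relation $\leq^+$ only controls $y$ at the skeleton $\{x(k):k<\omega\}$, whereas $g\leq^{*}\Psi_+(y)$ demands pointwise domination at every sufficiently large $n$. The key device is the \emph{double} monotone envelope $y^{*}\circ y^{*}$ combined with a rapidly growing $x$: one copy of $y^{*}$ transfers the bound $x(k+1)\leq y(x(k))$ from the point $x(k)$ to the arbitrary intermediate $n\in[x(k),x(k+1))$, and the outer copy absorbs the jump from $n$ up to $x(k+2)$, which in turn dominates $\hat g(n)$ by the recursion. Once this envelope is identified, the verification is a short chain of inequalities as above.
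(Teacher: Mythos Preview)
Your proof is correct and proceeds by a genuinely different route than the paper. The paper does not construct direct Tukey maps between $\Dbf$ and $\omega^\omega$; instead it factors through the interval-partition systems from \autoref{Charb-d}, proving $\Dbf\leqT\Ior$ and $\Dbf_2\leqT\Dbf$ and then quoting the already established $\Ior\eqT\Dbf_2\eqT\omega^\omega$. Your argument is more self-contained: the direction $\Dbf\leqT\omega^\omega$ is essentially the same idea in a different dress (a monotone envelope replaces the interval shift $J\mapsto y_J$), but for $\omega^\omega\leqT\Dbf$ your double-envelope device $y^*\circ y^*$ together with the recursively defined skeleton $x(k+1)=\hat g(x(k))$ is a clean elementary substitute for the paper's detour through $\Dbf_2$. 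The paper's approach has the advantage of reusing machinery that is needed elsewhere in the text (the $\Ior$-equivalences reappear in \autoref{lem:mon}), while yours buys a shorter, standalone proof that does not depend on interval partitions at all.
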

\begin{proof}
We use the relational systems from \autoref{Charb-d} and the notation introduced there. We prove that $\Dbf\leqT \Ior$ and $\Dbf_2\leqT \Dbf$.

The relation $\Dbf\leqT \Ior$ is witnessed by the maps $x\mapsto I_x$, where $I_x$ is an interval partition with endpoints $\{0\}\cup\set{x_k}{k<\omega}$, and $J\mapsto y_J$, where $y_J(k):= \min J_{n+2}$ whenever $k\in J_n$.

On the other hand, $\Dbf_2\leqT \Dbf$ is witnessed by $I\mapsto f_I$ and the map $F\colon \omega^\omega\to \Ior$ defined as follows: for $y\in\omega^\omega$, choose some increasing $y'\in\omega^\omega$ such that $y'(0)=0$ and $y'(n)>y(n)$ for all $n>0$, and define $F(y):= I^{y'}$. If $I \in \Ior$, $y\in\omega^\omega$ and $f_I\leq^+ y$ then, for eventually many $k$: find some $n$ such that $f_I(n)\leq {y'}^*(k) < f_I(n+1)$, so $f_I(n+1) \leq y(f_I(n)) < y'(f_I(n)) < y'({y'}^*(k)+1) = {y'}^*(k+1)$, which indicates that $I^{y'}_k$ is not contained in $I_n$, i.e.\ $I \ntriangleright I^{y'}$.
\end{proof}

We now have everything required for proving~\ref{EdcharB}.

\begin{lemma}
$\big((\omega^\omega)^\perp;(\Ed^*)^\perp\big)^\perp\leqT\Cbf_\Mwf$
\end{lemma}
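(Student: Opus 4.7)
By~\autoref{otherD} and~\autoref{aboutM}, we have $\omega^\omega\eqT\Dbf$ and $\Cbf_\Mwf\eqT\Mg^*$, so it suffices to construct a Tukey connection $(\Dbf^\perp;(\Ed^*)^\perp)^\perp\leqT\Mg^*$. The advantage of passing to $\Dbf$ is that its left-hand domain $W^*$ consists of strictly increasing functions $x$, and the relation $x\leq^+y$ is naturally controlled by the interval partition $\la [x_k,x_{k+1}):k<\omega\ra$ determined by $x$; this partition will drive the encoding.

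The plan is as follows. Fix a meager code $f\in\prod b^*$ with $f(n)\in 2^{[n,k_n)}$, $k_n>n$, and set $y_f(n):=k_n$ to serve as the $\Dbf$-component of $\Psi_+(f)$. For the $(\Ed^*)^\perp$-component, I would set
\[w_f(x):=\{k<\omega:x_{k+1}>k_{x_k}\},\]
completing $w_f(x)$ to an infinite set with dummy values $z_f(x)(k):=0$ on the extras in the (harmless) case when this set happens to be finite. For $k$ genuinely in $w_f(x)$, the prediction $z_f(x)(k)$ is the unique natural below $2^{x_{k+1}-x_k}$ whose binary expansion on $[x_k,x_{k+1})$ agrees with $f(x_k)$ on the sub-interval $[x_k,k_{x_k})\subseteq[x_k,x_{k+1})$ and vanishes elsewhere. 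On the other side, I would define $\Psi_-\colon W^*\times\omega^\omega\to 2^\omega$ by letting $\Psi_-(x,b)\frestr[x_k,x_{k+1})$ be the binary expansion of $b(k)\bmod 2^{x_{k+1}-x_k}$ (setting $\Psi_-(x,b)$ arbitrarily on the possibly-nonempty initial segment $[0,x_0)$).

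The Tukey verification will be a direct matching argument. Given $\Psi_-(x,b)\in M_f^*$, we need $x\leq^+y_f$ or $b\eqcirc g_f(x)$, where $g_f(x):=(w_f(x),z_f(x))$. If $x\leq^+y_f$ the first disjunct is free. Otherwise $w_f(x)=\{k:x_{k+1}>k_{x_k}\}$ is genuinely infinite, and by construction, any $k\in w_f(x)$ with $b(k)=z_f(x)(k)$ forces the bits of $\Psi_-(x,b)$ on $[x_k,k_{x_k})$ to coincide with $f(x_k)$, i.e., $\Psi_-(x,b)\supseteq f(x_k)$. Having infinitely many such $k$'s would produce infinitely many distinct $n=x_k$ with $\Psi_-(x,b)\supseteq f(n)$, contradicting $\Psi_-(x,b)\in M_f^*$; hence cofinitely many $k\in w_f(x)$ satisfy $b(k)\neq z_f(x)(k)$, which is precisely $b\eqcirc g_f(x)$.

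The main obstacle I anticipate is ensuring $w_f(x)\in[\omega]^{\aleph_0}$ uniformly, not just when $x\not\leq^+y_f$. However, in the opposite case the first disjunct of the Tukey condition holds for free, so completing $w_f(x)$ artificially does no harm to the verification above. A minor secondary point is that the binary encoding determines $b(k)$ only modulo $2^{x_{k+1}-x_k}$, but this is irrelevant in the direction we use: equality $b(k)=z_f(x)(k)$ implies agreement on every bit of the binary representation, hence on those bits that constitute $f(x_k)$.
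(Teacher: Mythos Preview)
Your argument is correct and follows essentially the same strategy as the paper's proof: both pass to $\Dbf$ and $\Mg^*$, both read off the threshold function $n\mapsto k_n$ from $f$, both single out the indices $k$ with $x_{k+1}>k_{x_k}$, and both predict that the real extends $f(x_k)$ on those blocks. The only cosmetic difference is that the paper replaces $\Ed^*$ by $\Ed^*_{b^*}$ (via \autoref{ex:othercodemg}) before building the connection, so that the prediction at a coordinate is literally the finite binary string $f(x_k)\in b^*(x_k)$ rather than a natural number encoding it; this lets the paper take the infinite set to be $\{x_k:k\in w_f(x)\}$ and the prediction function to be $f$ itself, avoiding your padding-and-mod step entirely. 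Your route stays in $\Ed^*$ over $\omega$ and pays for it with the binary encoding, but the verification is the same. One tiny bookkeeping point: make sure $z_f(x)$ is defined as a total function on $\omega$ (not just on $w_f(x)$ and the artificial extras), e.g.\ by setting it to $0$ off $w_f(x)$; this is harmless since $\eqcirc$ only inspects coordinates in $w_f(x)$.
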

\begin{proof}
By applying~\autoref{ex:compleqT},~\autoref{aboutM}, ~\ref{ex:othercodemg} and \ref{otherD}, it is enough to show $\Rbf^\perp_*\leqT\Mg^*$ where $\Rbf_*:=(\Rbf_0;\Rbf_1)$, $\Rbf_0:=\Dbf^\perp$ and $\Rbf_1:=(\Ed^*_{b^*})^\perp$ with $b^*$ as in \autoref{ex:othercodemg}. So $\Rbf_*^\perp=\la W^*\times\prod b^*, \omega^\omega\times([\omega]^{\aleph_0}\times\prod b^*)^{W^*}, R^\perp_* \ra$ where
\[(x,a)R_*^\perp(y,f)\text{ iff either } x\leq^+ y \text{ or } a\eqcirc f(x).\]

Given $a\in\prod b^*$, let $g_a\colon \omega\to\omega$ be such that $g_a(n)>n$ and $a(n)\in 2^{[n,g_a(n))}$.
Define $F\colon W^*\times\prod b^*\to 2^\omega$ such that, for $(x,a)\in W^*\times\prod b^*$, if $w_{x,a}:=\{k<\omega:\, g_a(x_k)<x_{k+1}\}$ is infinite (i.e.\ $x\nleq^+ g_a$) then $F(x,a)\supseteq a(x_k)$ for all $k\in w_{x,a}$.

Define $G\colon\prod b^*\to \omega^\omega\times([\omega]^{\aleph_0}\times\prod b^*)^{W^*}$ as follows. For $c\in\prod b^*$, define $f_c\colon W^*\to [\omega]^{\aleph_0}\times\prod b^*$ such that, whenever $x\nleq^+ g_c$, $f_c(x):=(z_{x,c},c)$ where $z_{x,c}:=\{x_k:\, k\in w_{x,c}\}$. So let $G(c):=(g_c,f_c)$.

We show that $\neg((x,a)R^\perp_*(g_c,f_c))$ implies $F(x,a)\notin M^*_c$. The hypothesis says that $x\nleq^+ g_c$ and that $w:=\{k\in w_{x,c}:\, a(x_k)=c(x_k)\}$ is infinite. For $k\in w$, $g_a(x_k)=g_c(x_k)$, so $w\subseteq w_{x,a}$ and $F(x,a)\supseteq a(x_k)=c(x_k)$ for all $k\in w$. This implies that $F(x,a)\notin M^*_c$.
\end{proof}

Using the equalities $\add(\Mwf)=\min\{\bfrak,\cov(\Mwf)\}$ and $\cof(\Mwf) = \max\{\dfrak,\non(\Mwf)\}$, we have the following reformulation of \autoref{aLcomega}.

\begin{theorem}
$\add(\Mwf)=\min\{\bfrak,\minaLc\}$ and $\cof(\Mwf)=\max\{\dfrak,\supaLc\}$   
\end{theorem}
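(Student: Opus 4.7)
The plan is to recognize that this statement is essentially a direct combination of three ingredients that have already been established in the paper: (i) the classical Cicho\'n identities $\add(\Mwf)=\min\{\bfrak,\cov(\Mwf)\}$ and $\cof(\Mwf)=\max\{\dfrak,\non(\Mwf)\}$, which are stated in the caption of \autoref{FigCichon}; (ii) the Bartoszy\'nski--Miller characterizations $\balc_{\omega,h}=\non(\Mwf)$ and $\dalc_{\omega,h}=\cov(\Mwf)$ whenever $h\geq^*1$, i.e.\ \autoref{BarMill}; and (iii) the identities from \autoref{aLcomega} together with the fact, from \autoref{Lcsup}\ref{Lcsuptwo}, that $\minaLc_h=\minaLc$ and $\supaLc_h=\supaLc$ for $h\geq^*1$.

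Concretely, first I would fix any $h\in\omega^\omega$ with $h\geq^*1$ (for instance $h=1$). For the additivity, I would chain:
\[
\add(\Mwf)=\min\{\bfrak,\cov(\Mwf)\}=\min\{\bfrak,\dalc_{\omega,h}\}=\min\{\bfrak,\minaLc_h\}=\min\{\bfrak,\minaLc\},
\]
where the second equality uses \autoref{BarMill}, the third uses \autoref{aLcomega}, and the last uses \autoref{Lcsup}\ref{Lcsuptwo}. Then for the cofinality, in a perfectly symmetric manner:
\[
\cof(\Mwf)=\max\{\dfrak,\non(\Mwf)\}=\max\{\dfrak,\balc_{\omega,h}\}=\max\{\dfrak,\supaLc_h\}=\max\{\dfrak,\supaLc\}.
\]

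There is no real obstacle: the work has all been done in the preceding sections, and what remains is only the bookkeeping above. The only thing that deserves a brief remark is that the $h$-invariance of $\minaLc_h$ and $\supaLc_h$ for $h\geq^*1$ is exactly what lets us suppress the parameter $h$ and land on the unadorned cardinals $\minaLc$ and $\supaLc$ as defined in \autoref{DefminLc} (where the convention ``$h$ is suppressed when $h=1$'' is fixed).
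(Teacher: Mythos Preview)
Your proof is correct and matches the paper's approach exactly: the paper simply states that the theorem is a reformulation of \autoref{aLcomega} using the Cicho\'n identities $\add(\Mwf)=\min\{\bfrak,\cov(\Mwf)\}$ and $\cof(\Mwf)=\max\{\dfrak,\non(\Mwf)\}$, and you have spelled out the chain of equalities in full. A minor remark: if you fix $h=1$ from the start, the appeal to \autoref{Lcsup}\ref{Lcsuptwo} becomes unnecessary, since $\minaLc=\minaLc_1$ and $\supaLc=\supaLc_1$ by definition.
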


\begin{remark}
    Let $\SNwf$ be the ideal of strong measure zero subsets of $\R$. Miller~\cite{Mi1981} proved that $\non(\SNwf) = \minaLc$ (which is also clear by using Yorioka ideals, see e.g. \cite{CM}). Hence, the previous theorem indicates that $\add(\Mwf) = \min\{\bfrak, \non(\SNwf)\}$.
\end{remark}

\section{Measure and antilocalization}\label{NaLc}

We present the following characterizations of the covering and uniformity of the null ideal in terms of anti-localization cardinals. The proofs rely on the combinatorics of small sets developed by Bartoszynski in~\cite{B1988} to study the cofinality of the covering of the null ideal.

%We are ready to state and prove the main result of this section.

%In what follows, we focus attention on demonstrating the characterization of the covering and uniformity of the null ideal. 

\begin{theorem}[{Bartoszy\'nki 1988~\cite{B1988}}]\label{alcCh}
\begin{multline*}
\tag{A} \min\left(\{\bfrak\}\cup\set{\balc_{b, h}}{b, h\in\omega^\omega\text{ and }\sum_{n\in\omega}\frac{h(n)}{b(n)}<\infty}\right)\leq\cov(\Nwf)\\
 \leq\min\set{\balc_{b, h}}{b,h\in\omega^\omega\text{ and }\sum_{n\in\omega}\frac{h(n)}{b(n)}<\infty}.\label{alcChA}   
\end{multline*}
In particular, 
if $\cov(\Nwf)<\bfrak$ then \[\cov(\Nwf)=\min\set{\balc_{b, h}}{b,h\in\omega^\omega\text{ and }\sum_{n\in\omega}\frac{h(n)}{b(n)}<\infty}.\] 

 \begin{multline*}\label{alcChB}
\tag{B}\sup\set{\dalc_{b, h}}{b, h\in\omega^\omega\text{ and }\sum_{n\in\omega}\frac{h(n)}{b(n)}<\infty} \leq\non(\Nwf)\leq\\
  \sup\left(\{\dfrak\}\cup\set{\dalc_{b, h}}{b, h\in\omega^\omega\text{ and }\sum_{n\in\omega}\frac{h(n)}{b(n)}<\infty}\right).   
\end{multline*}
In particular, if $\non(\Nwf)>\dfrak$ then
    \[\non(\Nwf)=\sup\set{\dalc_{b, h}}{b, h\in\omega^\omega\text{ and }\sum_{n\in\omega}\frac{h(n)}{b(n)}<\infty}.\]  
\end{theorem}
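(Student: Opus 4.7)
My plan is to split the theorem into an easy direction (the upper bound of (A) and the lower bound of (B)) and a hard direction (the lower bound of (A) and the upper bound of (B)), both anchored to the analysis of the set $A_\varphi := \set{x\in\prod b}{x\in^\infty\varphi}$ attached to a slalom $\varphi\in\Scal(b,h)$. Writing $A_\varphi=\bigcap_m\bigcup_{n\geq m}\set{x\in\prod b}{x(n)\in\varphi(n)}$, the $\Lb_b$-measure of the inner cylinder is $|\varphi(n)|/b(n)\leq h(n)/b(n)$, so whenever $\sum_n h(n)/b(n)<\infty$, Borel--Cantelli forces $\Lb_b(A_\varphi)=0$.

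The easy direction is then immediate. Given any admissible $(b,h)$ and a witness $F\subseteq\Scal(b,h)$ of $\balc_{b,h}$, every $x\in\prod b$ lies in $A_\varphi$ for some $\varphi\in F$, so $\set{A_\varphi}{\varphi\in F}$ is a cover of $\prod b$ by null sets and $\cov(\Nwf)\leq\balc_{b,h}$. Dually, any non-null witness $D\subseteq\prod b$ of $\non(\Nwf)$ cannot sit inside a single $A_\varphi$, hence $D$ is $\aLc(b,h)$-dominating and $\dalc_{b,h}\leq\non(\Nwf)$.

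For the hard direction I aim to produce a single admissible $(b,h)$ together with the Tukey connection $(\omega^\omega;\aLc(b,h))^\perp\leqT\Cbf_\Nwf$. Using \autoref{blascomp} and \autoref{cor:Tukeyval}, this one reduction furnishes both
\[
\min\{\bfrak,\balc_{b,h}\}=\dfrak((\omega^\omega;\aLc(b,h))^\perp)\leq\dfrak(\Cbf_\Nwf)=\cov(\Nwf)
\]
and
\[
\non(\Nwf)=\bfrak(\Cbf_\Nwf)\leq\bfrak((\omega^\omega;\aLc(b,h))^\perp)=\max\{\dfrak,\dalc_{b,h}\},
\]
completing the remaining inequalities of (A) and (B). The ``in particular'' clauses then drop out, since $\cov(\Nwf)<\bfrak$ inactivates the $\bfrak$ term in the min of (A), and dually $\non(\Nwf)>\dfrak$ inactivates the $\dfrak$ term in the sup of (B).

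The main obstacle is the combinatorial lemma powering this Tukey connection: I claim there exist $b,h\in\omega^\omega$ with $\sum_n h(n)/b(n)<\infty$ such that for every null $N\subseteq\prod b$ one can attach $x_N\in\omega^\omega$ and $F_N\colon\omega^\omega\to\Scal(b,h)$ with the property $y\geq^* x_N\Rightarrow N\subseteq A_{F_N(y)}$. Once this lemma is available, setting $\Psi_-(y,d):=d$ and $\Psi_+(N):=(x_N,F_N)$ gives the required Tukey connection, because $x_N\leq^* y$ together with ``$F_N(y)$ anti-localizes $d$'' would force $d\notin A_{F_N(y)}\supseteq N$. The construction of $F_N(y)$ proceeds in the spirit of \autoref{codenull}: fix a $G_\delta$-null cover $N\subseteq\bigcap_m U_m$ with $\Lb_b(U_m)<2^{-m}$, decompose each $U_m$ into basic clopens of $\prod b$, and use the summability $\sum_m\Lb_b(U_m)<\infty$ together with the dominating real $y$ to distribute these cylinders across coordinates so that $|\varphi(n)|\leq h(n)$ while still ensuring that every $x\in N$ meets $\varphi(n)$ at infinitely many $n$. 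The delicate bookkeeping required here is precisely Bartoszy\'nski's ``small set'' analysis from~\cite{B1988}; the correct choice of $(b,h)$ (with $b$ growing quickly enough to absorb the combinatorial overhead, and $h$ large enough to accommodate the cylinder redistribution while still summable against $b$) is the crux of the whole argument.
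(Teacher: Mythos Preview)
Your easy direction is correct and matches \autoref{Tukalcov}~\ref{Tukone} exactly.

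The hard direction has a genuine gap. Your key lemma asserts that for some \emph{fixed} $(b,h)$ with convergent sum, every null $N\subseteq\prod b$ is contained in a single limsup set $A_{F_N(y)}$ with $F_N(y)\in\Scal(b,h)$. Ignoring the role of $y$, this says that the family $\set{A_\varphi}{\varphi\in\Scal(b,h)}$ is cofinal in $\Nwf(\prod b)$---i.e., that every null set is \emph{small} relative to the coordinate structure of $b$. But Bartoszy\'nski proved (as the paper notes just before \autoref{def:2small}) that there exist measure-zero sets that are not small for \emph{any} interval partition; for such an $X$ no $A_\varphi$ with $\sum_n|\varphi(n)|/b(n)<\infty$ can contain it, regardless of how $(b,h)$ or the dominating real $y$ are chosen. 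So your lemma is false as stated, and the sketch that invokes ``distributing cylinders across coordinates'' cannot be completed.

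The paper sidesteps this obstruction via the 2-small coding (\autoref{U2small}, \autoref{U2smallmain}): every null set lies in a union $N_\varphi\cup N_{\varphi'}$ of \emph{two} small sets built on overlapping partitions $I,I'$ derived from a single $L\in\Ior$. This forces two separate anti-localization steps with parameters $(b^L,h^L)$ and $(\tilde b^L,\tilde h^L)$ depending on $L$, and crucially $L$ itself must dominate data extracted from the null sets---this is where $\bfrak$ enters in~(A) and $\dfrak$ in~(B). Consequently the argument does not collapse to a single Tukey connection $(\omega^\omega;\aLc(b,h))^\perp\leqT\Cbf_\Nwf$; instead it ranges over the infinitely many pairs $(b^L,h^L)$, $(\tilde b^L,\tilde h^L)$ as $L$ runs through a dominating family, and establishes the cardinal inequalities directly. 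Your reference to ``Bartoszy\'nski's small-set analysis'' points in the right direction, but that analysis is precisely what shows that one slalom family cannot do the job.
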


Some of the inequalities can be deduced from the following result.

\begin{lemma}[{\cite[Lemma~2.3]{KM}}]\label{Tukalcov} Let $b, h\in\omega^\omega$ and assume that $b\geq 1$ and $h\geq^*1$.
\begin{enumerate}[label=\normalfont(\alph*)]
    \item\label{Tukone} If $\sum_{n\in\omega}\frac{h(n)}{b(n)}<\infty$, then $\Cbf_\Nwf\leqT\aLc(b,h)^\perp$. In particular $\cov(\Nwf)\leq\balc_{b,h}$ and $\dalc_{b,h}\leq \non(\Nwf)$.
    
    \item\label{Tuktwo} If $\sum_{n\in\omega}\frac{h(n)}{b(n)}=\infty$, then $\Cbf_\Ewf\leqT\aLc(b,h)$. In particular $\cov(\Ewf) \leq \dalc_{b,h}$ and $\balc_{b,h} \leq \non(\Ewf)$.
\end{enumerate}
\end{lemma}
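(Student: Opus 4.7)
The plan is to establish both Tukey connections by working in the Polish space $\prod b$ with its product Lebesgue measure $\Lb_b$, and exploit the independence of coordinates together with the two halves of the Borel--Cantelli lemma. In both parts, the forward map $\Psi_-$ is a coordinate-wise construction on $\prod b$ and the backward map $\Psi_+$ packages a ``limsup of cylinders'' set.

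\textbf{Part \ref{Tukone}.} Take $X := \prod b$ as the underlying Polish space for $\Cbf_\Nwf$, let $\Psi_-\colon \prod b \to \prod b$ be the identity, and for $\varphi\in\Scal(b,h)$ put
\[
\Psi_+(\varphi) := \{x\in\prod b : x\in^\infty\varphi\} = \bigcap_{N<\omega}\bigcup_{n\geq N}\{x : x(n)\in\varphi(n)\}.
\]
Each cylinder $\{x:x(n)\in\varphi(n)\}$ has $\Lb_b$-measure $|\varphi(n)|/b(n)\leq h(n)/b(n)$; since $\sum_n h(n)/b(n)<\infty$, the first Borel--Cantelli lemma yields $\Lb_b(\Psi_+(\varphi))=0$, so $\Psi_+(\varphi)\in\Nwf$. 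The Tukey implication $\Psi_-(x)\in^\infty\varphi \Rightarrow x\in\Psi_+(\varphi)$ is tautological.

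\textbf{Part \ref{Tuktwo}.} Here I need $\Psi_-\colon\prod b\to\Scal(b,h)$ and $\Psi_+\colon\prod b\to\Ewf$ with $\Psi_-(y)\not\ni^\infty z\Rightarrow y\in\Psi_+(z)$. The key idea is to engineer $\Psi_-$ so that, for every fixed $z$, the events $\{y:z(n)\in\Psi_-(y)(n)\}$ are mutually independent with probability exactly $h(n)/b(n)$. A cyclic-shift slalom does this cleanly:
\[
\Psi_-(y)(n) := \{(y(n)+j)\bmod b(n) : j<h(n)\}\in [b(n)]^{\leq h(n)},
\]
and then
\[
\Psi_+(z) := \{y\in\prod b : \Psi_-(y)\text{ anti-localizes }z\} = \bigcup_{N<\omega}\bigcap_{n\geq N}\{y : z(n)\notin\Psi_-(y)(n)\}.
\]
The Tukey implication is immediate. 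To see that $\Psi_+(z)\in\Ewf$, observe that $z(n)\in\Psi_-(y)(n)$ is equivalent to $y(n)\in\{z(n)-h(n)+1,\ldots,z(n)\}\bmod b(n)$, a clopen condition on $y(n)$ alone; hence each inner set in the union is closed and $\Psi_+(z)$ is $F_\sigma$. These events have common $\Lb_b$-probability $h(n)/b(n)$ and, depending on disjoint coordinates, are independent under $\Lb_b$; since $\sum_n h(n)/b(n)=\infty$, the second Borel--Cantelli lemma gives $\Lb_b(\Psi_+(z))=0$, so $\Psi_+(z)$ is an $F_\sigma$ null set.

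\textbf{Main obstacle.} The principal subtlety is in part \ref{Tuktwo}: the slalom $\Psi_-$ must be chosen so that $\Psi_+(z)$ is simultaneously $F_\sigma$ (requiring a clopen, coordinate-wise description) and null (requiring a divergent sum of independent hitting probabilities). The cyclic shift secures both at once because it is local in $y(n)$ and its hitting probability equals exactly $h(n)/b(n)$. The cardinal inequalities in each item then drop out of \autoref{cor:Tukeyval}.
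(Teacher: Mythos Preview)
Your argument is correct. Part~\ref{Tukone} is identical to the paper's proof. For part~\ref{Tuktwo} you take a genuinely different (and more direct) route: the paper first reduces via \autoref{aLc1} and \autoref{Ed-aLc} to showing $\Cbf_\Ewf\leqT\Ed_d$ for $d(n)=\lceil b(n)/h(n)\rceil$, and then proves that case with $\Psi_-=\id$ and $\Psi_+(y)=\{x\in\prod d:\ x\neq^\infty y\}$, computing the measure via $\prod_{m\geq n}(1-1/d(m))\leq e^{-\sum_{m\geq n}1/d(m)}=0$. Your cyclic-shift slalom bypasses that reduction entirely and stays in $\prod b$; the gain is a self-contained proof that does not invoke the auxiliary lemmas, while the paper's route makes the dependence on $\Ed_d$ explicit (which is useful elsewhere in the survey). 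One small imprecision: the hitting probability is $\min\{h(n)/b(n),1\}$ rather than exactly $h(n)/b(n)$ when $h(n)\geq b(n)$, but $\sum_n\min\{h(n)/b(n),1\}$ still diverges, so Borel--Cantelli applies unchanged.
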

\begin{proof}
\noindent\ref{Tukone}: It suffices to find functions $\Psi_-\colon \prod b\to\prod b$ and $\Psi_+\colon \Swf(b,h)\to\Nwf(\prod b)$ such that, for any $x\in\prod b$ and for any $\varphi\in\Swf(b,h)$, $\Psi_-(x)\in^\infty\varphi$ implies $x\in\Psi_+(\varphi)$. Define $\Psi_-\colon \prod b\to\prod b$ as the identity map and define $\Psi_+\colon \Swf(b,h)\to\Nwf(\prod b)$ by $\Psi_+(\varphi):=\set{x\in\prod b}{x \in^\infty \varphi}$. It is clear that $\Psi_-(x)\in^\infty\varphi$ implies $x\in\Psi_+(\varphi)$, but we must show that $\Psi_+(\varphi)\in\Nwf(\prod b)$, which in turn will guarantee that $(\Psi_-,\Psi_+)$ is the desired Tukey connection. Note that \[\Psi_+(\varphi)=\bigcap_{n\in\omega}\bigcup_{m\geq n}\set{x\in\prod b}{x(n)\in\varphi(n)}.\]
Now, for $n\in\omega$
\begin{align*}
\Lb_b(\Psi_+(\varphi))&=\lim_{n\to\infty}\Lb_b \left(\bigcup_{m\geq n}\set{x\in\prod b}{x(n)\in\varphi(n)}\right)\\
&\leq\lim_{n\to\infty}\sum_{m\geq n}\Lb_b\left(\set{x\in\prod b}{x(n)\in\varphi(n)}\right) \\
&\leq\lim_{n\to\infty}\sum_{m\geq n}\frac{|\varphi(n)|}{b(n)}\leq\lim_{n\to\infty}\sum_{m\geq n}\frac{h(n)}{b(n)}=0,
\end{align*}
 where the last equality hold because $\sum_{n\in\omega}\frac{h(n)}{b(n)}<\infty$.
 
\noindent\ref{Tuktwo}: By~\autoref{aLc1} and~\autoref{Ed-aLc} it is enough to prove $\Cwf_{\Ewf}\leqT\Ed_d$ whenever $\sum_{n<\omega}\frac{1}{d(n)}=\infty$. 
The case $d\leq^*1$ is trivial, so we can assume $d\not\leq^*1$. For $y\in\prod d$, set $\Psi_+(y):=\set{x\in\prod d}{x \neq^\infty y}$. Notice that $\Lb_d(\Psi_+(y))=\lim_{n\to\infty}\prod_{m\geq n}\bigg(1-\frac{1}{d(m)}\bigg)$ and by employing the fact that $1+x\leq e^x$ for any $x$, we obtain that,
\[\prod_{m\geq n}\bigg(1-\frac{1}{d(m)}\bigg)\leq\prod_{m\geq n}e^{-\frac{1}{d(m)}}=e^{-\sum_{m\leq n}\frac{1}{d(m)}}=0,\]
hence $\Psi_+(x)\in\Ewf(\prod d)$. By setting $\Psi_-\colon \prod d\to\prod d$ as the identity function, $(\Psi_-,\Psi_+)$ is the desired Tukey connection.  
\end{proof}

%We now define a new family of sets which gives us another combinatorial representation of null sets.
We review the notion of \emph{small sets} from Bartoszy\'nski.

\newcommand{\twosmall}{2-small coding}

\begin{definition}[{\cite{B1988}}]\label{def:small} 
Let $I=\seq{I_n}{n\in\omega}$ be a sequence of finite subsets of $\omega$. For $\varphi\in\prod_{n<\omega}\pts(2^{I_n})$, we define
\[N_\varphi:=\set{x\in2^\omega}{\exists^{\infty}\, n\in\omega\colon x{\upharpoonright}I_n\in \varphi(n)}.\]
Note that $N_\varphi\in\Nwf$ whenever $\sum_{n<\omega}\frac{|\varphi(n)|}{2^{|I_n|}}<\infty$ (by \autoref{ex:codenull}~\ref{ex:codenulld}).

A set $X\subseteq2^\omega$ is \textit{small} if there is a partition $I=\seq{I_n}{n\in\omega}$ of a cofinite subset of $\omega$ into finite sets and $\varphi\in\prod_{n<\omega}\pts(2^{I_n})$ such that $X\subseteq N_\varphi$ and  $\sum_{n<\omega}\frac{|\varphi(n)|}{2^{|J_n|}}<\infty$.
% and a sequence $\bar J=\seq{J_n}{n\in\omega}$ such that
% \begin{enumerate}[label=\normalfont(\alph*)]
%     \item $I_n\in[\omega]^{\aleph_0}$ for all $n\in\omega$,
    
%     \item $I_n\cap I_m=\emptyset$ for $n\neq m$,
    
%     \item $J_n \subseteq 2^{I_n}$ for all $n$,
    
%     \item $\sum_{n\in\omega}\frac{|J_n|}{2^{I_n}}<\infty$, and 
%     %\item[(b)] $\sum_{n<\omega}\frac{|J_n|}{2^{f(n+1)-f(n)}}<\infty$, and 
    
%     \item $X\subseteq\set{x\in2^\omega}{\exists^{\infty}n:x{\upharpoonright}I_n\in J_n}$.
% \end{enumerate}
% If, in addition, sets $I_n$ are disjoint intervals, that is, if there exists a strictly increasing sequence of integers $\set{k_n}{n\in\omega}$ such that $I_n=[k_n, k_{n+1})$ for each $n\in\omega$.

% Denote by $N_{\bar I,\bar J}$ the set $\set{x\in2^\omega}{\exists^{\infty}n\in\omega:x{\upharpoonright}I_n\in J_n}$.
\end{definition}

We can reformulate \autoref{codenull} as follows:

\begin{lemma}\label{presmall}
    Assume that $X\in\Nwf(2^\omega)$. Then there is some $\varphi\in\prod_{k<\omega}\pts(2^k)$ such that $\sum_{k<\omega}\frac{|\varphi(k)|}{2^k}<\infty$ and $X\subseteq N_\varphi$.
\end{lemma}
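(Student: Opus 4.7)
The plan is to bypass a direct reduction to \autoref{codenull} and instead construct $\varphi$ from first principles, by re-indexing the basic cylinders of rapidly-shrinking open covers of $X$ according to their length.

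First, I would choose, for each $m<\omega$, an open set $U_m\supseteq X$ with $\Lb_2(U_m)<4^{-m}$, and write $U_m$ as a pairwise-disjoint union $\bigsqcup_{i<\omega}[t_{m,i}]$ of basic clopen cylinders in $2^\omega$, so $\sum_{i<\omega}2^{-|t_{m,i}|}=\Lb_2(U_m)<4^{-m}$. The key quantitative estimate is that $2^{-|t_{m,i}|}=\Lb_2([t_{m,i}])\leq\Lb_2(U_m)<4^{-m}$, so $|t_{m,i}|>2m$ for every $m,i$. This growth with $m$ is what will force the witnessing lengths in the conclusion to be infinitely many distinct integers.

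Next, I define $\varphi\in\prod_{k<\omega}\pts(2^k)$ by
\[\varphi(k):=\set{t_{m,i}}{m,i<\omega\text{ and }|t_{m,i}|=k}.\]
Summability is a straightforward swap of sums:
\[\sum_{k<\omega}\frac{|\varphi(k)|}{2^k}\leq\sum_{m,i<\omega}2^{-|t_{m,i}|}=\sum_{m<\omega}\Lb_2(U_m)<\sum_{m<\omega}4^{-m}<\infty.\]
For the containment $X\subseteq N_\varphi$, fix $x\in X$. For every $m$, since $x\in U_m$, there is a (unique) $i_m$ with $x\in[t_{m,i_m}]$, i.e.\ $x\upharpoonright|t_{m,i_m}|=t_{m,i_m}\in\varphi(|t_{m,i_m}|)$; the lengths $|t_{m,i_m}|>2m$ are unbounded in $m$, so $\set{k<\omega}{x\upharpoonright k\in\varphi(k)}$ is infinite and hence $x\in N_\varphi$.

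There is no serious obstacle. The one point requiring care is the quantitative bound $|t_{m,i}|>2m$ (rather than a mere qualitative claim that $|t_{m,i}|\to\infty$ as $i\to\infty$ for each fixed $m$): without controlling growth in $m$ one could a priori face the pathological situation in which the witnessing lengths $|t_{m,i_m}|$ remain bounded as $m$ varies, which would only give finitely many distinct $k$ with $x\upharpoonright k\in\varphi(k)$. The rapid decay $\Lb_2(U_m)<4^{-m}$ is the simplest way to rule this out and simultaneously secure convergence of $\sum_k|\varphi(k)|/2^k$.
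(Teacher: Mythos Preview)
Your proof is correct. It differs from the paper's route in a clean way worth noting. The paper invokes \autoref{codenull} to obtain a sequence $\bar p\in\nc_2$ with $X\subseteq N(\bar p)$ and $\Lb_2(\cp(p_n))<\varepsilon_n$, then picks a single strictly increasing level $k_n$ at which each clopen set $\cp(p_n)$ is determined by some $T_n\subseteq 2^{k_n}$, and sets $\varphi(k_n):=T_n$ and $\varphi(k):=\emptyset$ otherwise; containment then follows because $N(\bar p)\subseteq N_\varphi$ via the increasing levels $k_n$. You instead bypass \autoref{codenull} entirely: you take the raw open covers $U_m$ with geometrically decaying measure, bin \emph{all} of their constituent cylinders by length into $\varphi(k)$, and use the pointwise bound $|t_{m,i}|>2m$ to force infinitely many witnessing lengths. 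Your argument is more self-contained and arguably more transparent for this particular statement; the paper's version has the advantage of reusing machinery already set up, and produces a $\varphi$ supported on a sparse set of levels (one per $n$) rather than potentially many. One cosmetic point: when you write $U_m=\bigsqcup_{i<\omega}[t_{m,i}]$ you should allow the index set to be finite (or empty) in case $U_m$ happens to be clopen, but this does not affect the argument.
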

\begin{proof}
    Let $\bar \varepsilon =\la \varepsilon_n:\, n<\omega\ra$ be a sequence of positive reals such that $\sum_{n<\omega}\varepsilon_n<\infty$. By \autoref{codenull}, there is some $\bar p\in \nc_2$ such that $X\subseteq N(\bar p)$ and $\Lb_2(\cc(p_n))<\varepsilon_n$ for all $n<\omega$. Since each $\cc(p_n)$ is a clopen subset of $2^\omega$, we can find an increasing sequence $\la k_n:\, n<\omega\ra$ of natural numbers and, for each $n<\omega$, a set $T_n\subseteq 2^{k_n}$, such that $\cc(p_n)=\set{x\in 2^\omega}{x\frestr k_n\in T_n}$. Note that $\Lb_2(\cc(p_n))=\frac{|T_n|}{2^{k_n}}<\varepsilon_n$.
    
    Define
    \[\varphi(k):=\left\{
      \begin{array}{ll}
          T_n &  \text{if $k=k_n$ for some $n<\omega$,}\\
          \emptyset & \text{otherwise.}
      \end{array}
    \right.\]
    So $\varphi$ is as required.
\end{proof}

In spite of the previous result, Bartoszy\'nski~\cite{B1988} proved that there are measure zero sets that are not small. He showed, on the other hand, that every null sets can be covered by two small sets. This inspires the following definition.

%Inspired by~\cite[Def.~1.2]{B1988} we introduce the following:

\begin{definition}\label{def:2small}
We say that $\tbf=(L,\varphi,\varphi')$ is \emph{\twosmall} if:
\begin{enumerate}[label =\normalfont (T\arabic*)]
    \item\label{def:2smalla} $ L\in\Ior$, and we denote $L_{2k}=[n_k,m_k)$ and $L_{2k+1}=[m_k,n_{k+1})$ (so $n_k < m_k <n_{k+1}$), and define $I:=\la I_k:\, k<\omega\ra$ and $I':=\la I'_k:\, k<\omega\ra$ by $I_k:=[n_k,n_{k+1})$ and $I'_k:=[m_k,m_{k+1})$.
    \item\label{def:2smallb} $\varphi\in \prod_{n<\omega}\pts(2^{I_n})$ and $\varphi'\in\prod_{n<\omega}\pts(2^{I'_n})$ satisfies $\frac{|\varphi(k)|}{2^{|I_k|}}<\frac{1}{4^{k+1}}$ and $\frac{|\varphi'(k)|}{2^{|I'_k|}}<\frac{1}{4^{k+1}}$.
\end{enumerate}
\end{definition}

The following result, due to Bartoszy\'nski, not only shows that any null set can be covered by two small sets, but that less than $\bfrak$ many null sets can be contained in unions of pairs of small sets that come from the same partition $L$:

\begin{lemma}\label{U2small}
 Let $\set{G_\alpha}{\alpha<\kappa}$ be a family of sets in $\Nwf(2^\omega)$ and let $D\subseteq\Ior$ be dominating on $\Ior$. If $\kappa<\bfrak$, then there is some $L\in D$ and some \twosmall s $\tbf^\alpha=(L,\varphi_\alpha,\varphi'_\alpha)$ (on the same partition $L$) for $\alpha<\kappa$ such that, for any $\alpha<\kappa$, $G_\alpha \subseteq N_{\varphi_\alpha}\cup N_{\varphi'_\alpha}$.
 
 %then there exist sequences $\set{n_k,m_k}{k\in\omega}$ and $\set{J_k^\alpha, \tilde J_k^\alpha}{k\in\omega}$ such that for every $k\in\omega$: \DM{colocar referencia (más si no se va a probar)} \MC{Para discutir la mejor manera de incluirlo}
% \begin{enumerate}[label=\normalfont(\alph*)]
%     \item $n_k<m_k<n_{k+1}<m_{k+1}$,
%     \item $I_{k}:=[n_k,n_{k+1})$ and $I_{k}':=[m_k,m_{k+1})$,
%     %\item $\set{\varphi_{i,\alpha}}{\alpha<\kappa}\subseteq\Swf_{\bar I_i}$ for $i<2$, and 
%     \item $J_k^\alpha\subseteq 2^{I_{k}}$, $\tilde J_k^\alpha\subseteq 2^{I_{k}'}$, 
%      \item $\sum_{k\in\omega}\frac{| J_k^\alpha|}{2^{I_{k}}}<\infty$, $\sum_{k\in\omega}\frac{|\tilde J_k^\alpha|}{2^{I_{k}'}}<\infty$
%     %\item $G_\alpha\subseteq N_{\bar I_{0},\varphi_{0}}\cup N_{\bar I_{1},\varphi_{1}}$ for all $\alpha<\kappa$.
%     %\item $X\subseteq N_{\bar I,\bar J}\cup N_{\bar I',\bar  J'}$ for all $\alpha<\kappa$.

%     \item $G_\alpha\subseteq\set{x\in2^\omega}{\exists^{\infty}\, k\in\omega\colon x{\upharpoonright}I_k\in J_k^\alpha}\cup\set{x\in2^\omega}{\exists^{\infty}\, k\in\omega\colon x{\upharpoonright}I_k'\in \tilde J_k^\alpha}$ for all $\alpha<\kappa$.
% \end{enumerate}
% For each $n\in\omega$ let $b(n)=2^{|I_n|}$ and $b'(n)=2^{|I'_n|}$. Then we can define  $\set{\varphi_{\alpha}}{\alpha<\kappa}\subseteq\Swf_{b}$ and $\set{\varphi_{\alpha}}{\alpha<\kappa}\subseteq\Swf_{b'}$ by $\varphi_{\alpha}(k):=J_k^\alpha$ and $\varphi_{\alpha}(k):=\tilde J_k^\alpha$ for $k<\omega$ and $\alpha<\kappa$
\end{lemma}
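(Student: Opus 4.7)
The plan is to replace each $G_\alpha$ by a single small code in the sense of Lemma~\ref{presmall}, find a simultaneously suitable rate of tail decay using $\kappa < \bfrak$, pick a single $L\in D$ whose boundary points grow fast enough along that rate, and finally split each small code into two codes adapted to $L$: one indexed by the odd-indexed blocks of $L$ and one by the even-indexed blocks. The fact that every block of $L$ gets absorbed into either an ``odd'' interval $I_n$ or an ``even'' interval $I'_n$ is exactly what makes a 2-small coding (rather than a 1-small one) sufficient.

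First, for each $\alpha$ I would apply Lemma~\ref{presmall} to obtain $\psi_\alpha \in \prod_{k<\omega} \pts(2^k)$ with $G_\alpha \subseteq N_{\psi_\alpha}$ and $\sum_k |\psi_\alpha(k)|/2^k$ finite, and then define
\[ f_\alpha(m) := \min\Bigl\{ l<\omega : \sum_{k\geq l} \frac{|\psi_\alpha(k)|}{2^k} < \frac{1}{2^m \cdot 4^{m+1}}\Bigr\}.\]
Since there are only $\kappa<\bfrak$ many $f_\alpha$'s, they admit a common $\leq^*$-upper bound $f\in\omega^\omega$, which I may take increasing. Using the Tukey equivalence $\Ior \eqT \omega^\omega$ from Example~\ref{Charb-d}, I would then build the target partition $J := J^f \in \Ior$ and, appealing to the fact that $D$ is $\sqsubseteq$-dominating in $\Ior$, pick $L\in D$ with $J \sqsubseteq L$. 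Writing $L_j = [l_j, l_{j+1})$, the relation $J\sqsubseteq L$ forces $l_{j+1} \geq f(l_j) \geq f_\alpha(l_j)$ for all $j$ eventually (with the threshold depending on $\alpha$). Reindex $L$ as in Definition~\ref{def:2small} by setting $n_k = l_{2k}$, $m_k = l_{2k+1}$, so that $I_k = L_{2k}\cup L_{2k+1}$ and $I'_k = L_{2k+1}\cup L_{2k+2}$; the key geometric fact is that every block $L_j$ with $j\geq 1$ lies inside some $I_n$ (if $j$ is odd) or inside some $I'_n$ (if $j$ is even).

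For the codes themselves, the natural definition is
\[\varphi_\alpha(n) := \{ s \in 2^{I_n} : \exists k \in L_{2n+1},\ \exists u \in \psi_\alpha(k),\ s\frestr[n_k,k) = u\frestr[n_k,k)\},\]
and symmetrically $\varphi'_\alpha(n)\subseteq 2^{I'_n}$ using $k \in L_{2n+2}$ and the offset $m_k$ in place of $n_k$. A direct count yields $|\varphi_\alpha(n)|/2^{|I_n|} \leq 2^{l_{2n}} \sum_{k\geq l_{2n+1}} |\psi_\alpha(k)|/2^k$, which by the growth condition on $L$ (and the fact that $l_{2n}\geq n$) falls below $1/4^{n+1}$ for all but finitely many $n$; on the finite exceptional set I would just declare $\varphi_\alpha(n)=\emptyset$ (and analogously for $\varphi'_\alpha$), so that $\tbf^\alpha = (L,\varphi_\alpha,\varphi'_\alpha)$ is a bona fide 2-small coding. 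Coverage is then automatic: any $x\in N_{\psi_\alpha}$ has infinitely many witnesses $x\frestr k \in \psi_\alpha(k)$, these $k$'s are spread over blocks $L_j$ with $j\to\infty$, and by pigeonhole either infinitely many of the relevant $j$'s are odd (feeding into $N_{\varphi_\alpha}$) or infinitely many are even (feeding into $N_{\varphi'_\alpha}$).

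The main obstacle will be the size estimate: the naive projection of $\psi_\alpha(k)$ onto $2^{I_n}$ costs a factor of $2^{l_{2n}}$, and the whole construction is engineered precisely to absorb this factor. This is why the tail-decay function $f_\alpha$ must be defined with the strange value $1/(2^m\cdot 4^{m+1})$ at $m$, and why the use of $\kappa<\bfrak$ is essential — to find a single $f$, and hence a single $L$, that works for all $\alpha$ at once.
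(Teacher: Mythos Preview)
Your proof is correct and follows essentially the same route as the paper's: apply \autoref{presmall}, use $\kappa<\bfrak$ to find a single rate of tail decay, choose $L\in D$ growing fast enough, and split each $\psi_\alpha$ into two codes according to whether the witness index lands in an odd or even block of $L$ (the paper dominates the partitions $L^\alpha$ directly in $\Ior$ rather than passing through $\omega^\omega$, but this is cosmetic). One notational slip: in your definition of $\varphi_\alpha(n)$ the restriction should be over $[n_n,k)$ (equivalently $[l_{2n},k)$), not $[n_k,k)$; likewise $m_n$ in place of $m_k$ for $\varphi'_\alpha$.
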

\begin{proof}
    For each $\alpha<\kappa$ find some $\psi_\alpha\in\prod_{k<\omega}\pts(2^k)$ such that $\sum_{k<\omega}\frac{|\psi_\alpha(k)|}{2^k}<\infty$ and $G_\alpha\subseteq N_{\psi_\alpha}$ (by \autoref{presmall}). By recursion on $k<\omega$ define $n^\alpha_k$ and $m^\alpha_k$ such that $n^\alpha_0=0$,
    \[
    \begin{split}
     m^\alpha_k & :=\min\set{j<\omega}{j>n_k,\ 2^{n^\alpha_k}\sum_{i\geq j}\frac{|\psi_\alpha(i)|}{2^i}<\frac{1}{4^{k+1}}}, \text{ and }\\
     n^\alpha_{k+1} & :=\min\set{j<\omega}{j>m_k,\ 2^{m^\alpha_k}\sum_{i\geq j}\frac{|\psi_\alpha(i)|}{2^i}<\frac{1}{4^{k+1}}}.
    \end{split}
    \]
    Define $L^\alpha\in\Ior$ by $L^\alpha_k:=[n^\alpha_{2k},n^\alpha_{2(k+1)})$. Since $\kappa<\bfrak=\bfrak(\Ior)$, find some $L\in D$ such that $L^\alpha \sqsubseteq L$ for all $\alpha<\kappa$ (see \autoref{Charb-d}). Denote $L_{2k}=[n_k,m_k)$ and $L_{2k+1} = [m_k, n_{k+1})$, and define $I_k:=[n_k,n_{k+1})$ and $I'_k:=[m_k,m_{k+1})$. Then, for each $\alpha<\kappa$, there is some $j_\alpha<\omega$ such that, for all $k\geq j_\alpha$, $L_{2k}$ contains $[n^\alpha_\ell,m^\alpha_\ell)$ and $L_{2k+1}$ contains $[m^\alpha_{\ell'}, n^\alpha_{\ell'+1})$ for some $\ell,\ell'\geq k$.
    
    For each $\alpha<\kappa$, define $\varphi_\alpha$ and $\varphi'_\alpha$ by
    \[
    \begin{split}
        \varphi_\alpha(k) & :=\set{s\in 2^{I_k}}{s \text{ is compatible with some } t\in \bigcup_{i=m_k}^{n_{k+1}-1}\psi_\alpha(i)}, \text{ and}\\
        \varphi'_\alpha(k) & :=\set{s\in 2^{I'_k}}{s \text{ is compatible with some } t\in \bigcup_{i=n_{k+1}}^{m_{k+1}-1}\psi_\alpha(i)}
    \end{split}    
    \]
    whenever $k\geq j_\alpha$, otherwise $ \varphi_\alpha(k) =  \varphi'_\alpha(k) =\emptyset$.
    
    We show that $\tbf_\alpha:=(L,\varphi_\alpha, \varphi'_\alpha)$ is as required, i.e.\ we show that~\ref{def:2smallb} of \autoref{def:2small} is satisfied, and that $G_\alpha\subseteq N_{\varphi_\alpha}\cup N_{\varphi'_\alpha}$. The property~\ref{def:2smallb} is clear when $k<j_\alpha$, and when $k\geq j_\alpha$, $n_k\leq n^\alpha_\ell < m^\alpha_\ell \leq m_k$ and $m_k \leq m^\alpha_{\ell'}< n^\alpha_{\ell'+1}\leq n_{k+1}$ for some $\ell,\ell'\geq k$, so
    \[\frac{|\varphi_\alpha(k)|}{2^{|I_k|}}\leq 2^{n_k}\sum_{i\geq m_k}\frac{|\psi_\alpha(i)|}{2^i} \leq 2^{n^\alpha_\ell}\sum_{i\geq m^\alpha_\ell}\frac{|\psi_\alpha(i)|}{2^i}<\frac{1}{4^{\ell +1}}\leq \frac{1}{4^{k +1}}\]
    and, similarly, \[\frac{|\varphi'_\alpha(k)|}{2^{|I'_k|}} < \frac{1}{4^{k +1}}.\]

    For $G_\alpha\subseteq N_{\varphi_\alpha}\cup N_{\varphi'_\alpha}$: if $x\in G_\alpha$ then the set $K:=\set{i<\omega}{x\frestr i \in \psi_\alpha(i)}$ is infinite, so one of $K_1:=K\cap \bigcup_{k<\omega}[m_k,n_{k+1})$ and $K_2:=K\cap \bigcup_{k<\omega}[n_{k+1},m_{k+1})$ is infinite. In case $K_1$ is infinite, we have that $x\frestr I_k\in \varphi_\alpha(k)$ for all $k$ such that $K\cap[m_k,n_{k+1})\neq \emptyset$, and there are infinitely many such $k$, so $x\in N_{\varphi_\alpha}$; the case that $K_2$ is infinite allows to conclude, in a similar way, that $x\in N_{\varphi'_\alpha}$.
\end{proof}

The result below is proven by unraveling Bartoszy\'nski's original proof of~\autoref{alcCh}. 

\begin{lemma}\label{U2smallmain}
    Assume that $\tbf=(L,\varphi, \varphi')$ is a \twosmall. Define, for $k<\omega$,
    \begin{align*}
    S_k^\tbf & :=\set{s\in2^{[n_k,m_k)}}{s\text{\ has at least $2^{n_{k+1}-m_k-k}$ many extensions inside $\varphi(k)$}},\\
   \tilde S_k^\tbf & :=\set{s\in2^{[n_k,m_k)}}{s\text{\ has at least $2^{n_{k}-m_{k-1}-k}$ many extensions inside $\varphi'(k-1)$}},
\end{align*}
and set $\tilde S^\tbf_0:=\emptyset$. Then:
\begin{enumerate}[label = \normalfont (\alph*)]
    \item\label{U2smallmaina} $\frac{|S_k^\tbf|}{2^{m_k-n_k}}< \frac{1}{2^{k+2}}$ and $\frac{|\tilde S_k^\tbf|}{2^{m_k-n_k}}< \frac{1}{2^{k}}$ for all $k<\omega$.
    \item\label{U2smallmainb} The slalom $\varphi_\tbf\in\prod_{k<\omega}2^{I_k}$ defined by $\varphi_\tbf(k):=\set{s\in2^{I_{k}}}{s{\upharpoonright}[n_k,m_k)\in S_k^\tbf\cup \tilde S_k^\tbf}$ satisfies $\frac{|\varphi_\tbf(k)|}{2^{|I_k|}} < \frac{1}{2^{k-1}}$.
    \item\label{U2smallmainc} Assume $x\in 2^\omega$ such that $\exists\, k_0<\omega\ \forall\, k\geq k_0\colon x\frestr I_k\notin \varphi_\tbf(k)$. Denote $\tilde I_k:= [m_k, n_{k+1})$ for $k<\omega$, and define:
    \[\psi_\tbf^x(k):=\set{s\in2^{\tilde I_k}}{x{\upharpoonright}[n_k,m_k){}^{\frown} s\in \varphi(k)\text{\ or\ }s{}^{\frown}x{\upharpoonright}[n_{k+1},m_{k+1})\in \varphi'(k)}\]
    whenever $k\geq k_0$, or $\psi^x_\tbf(k):=\emptyset$ otherwise.
    Then $\forall\, k<\omega\colon \frac{|\psi_\tbf^x(k)|}{2^{|\tilde I_k|}}< \frac{1}{2^{k-1}}$.

    \item\label{U2smallmaind} Assume $x\in 2^\omega$ is as in the previous item and $y\in 2^\omega$ satisfies $\forall^\infty\, k<\omega\colon y\frestr \tilde I_k \notin \psi^x_\tbf(k)$. Define $z_{x,y}\in 2^\omega$ by
    \[z_{x,y}(i):=\left\{
      \begin{array}{ll}
         y(i)  & \text{if $i\in \tilde I_k$ for some $k$,}\\
         x(i)  & \text{otherwise.}
      \end{array}\right.\]
      Then $z_{x,y}\notin N_\varphi\cup N_{\varphi'}$.
\end{enumerate}
\end{lemma}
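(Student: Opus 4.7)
The entire lemma is a bookkeeping exercise in double counting, organized around the partition $L$: the idea is to first isolate, on each middle block $[n_k,m_k)$, the ``dangerous'' restrictions which have too many extensions into $\varphi(k)$ or preimages into $\varphi'(k-1)$, bound their density using the assumption~\ref{def:2smallb}, and then use the remaining part of the block to produce a small slalom on $\tilde I_k$ that captures both $\varphi$ and $\varphi'$ simultaneously.

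For~\ref{U2smallmaina}: each $s\in S_k^\tbf$ contributes $\geq 2^{n_{k+1}-m_k-k}$ distinct elements of $\varphi(k)$ (obtained by appending extensions on $[m_k,n_{k+1})$), and these contributions are disjoint for different $s$. Since $|\varphi(k)|<2^{|I_k|}/4^{k+1}$ by~\ref{def:2smallb}, double counting gives $|S_k^\tbf|<2^{m_k-n_k}/2^{k+2}$. The bound on $|\tilde S_k^\tbf|$ is symmetric: each $s\in\tilde S_k^\tbf$ gives $\geq 2^{n_k-m_{k-1}-k}$ disjoint preimages in $\varphi'(k-1)$, and $|\varphi'(k-1)|<2^{|I'_{k-1}|}/4^k$ gives $|\tilde S_k^\tbf|<2^{m_k-n_k}/2^k$.

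For~\ref{U2smallmainb}, $\varphi_\tbf(k)$ consists of all $s\in 2^{I_k}$ whose restriction to $[n_k,m_k)$ lies in $S_k^\tbf\cup\tilde S_k^\tbf$, so a union bound together with~\ref{U2smallmaina} yields
\[\frac{|\varphi_\tbf(k)|}{2^{|I_k|}}\leq \frac{|S_k^\tbf|+|\tilde S_k^\tbf|}{2^{m_k-n_k}} < \frac{1}{2^{k+2}}+\frac{1}{2^k} = \frac{5}{2^{k+2}} < \frac{1}{2^{k-1}}.\]
For~\ref{U2smallmainc}, the hypothesis $x\frestr I_k\notin\varphi_\tbf(k)$ says $x\frestr[n_k,m_k)\notin S_k^\tbf$, so the first clause of $\psi_\tbf^x(k)$ contributes fewer than $2^{n_{k+1}-m_k-k}=2^{|\tilde I_k|-k}$ elements; the critical point is that the second clause is controlled not by $x\frestr I_k$ but by $x\frestr I_{k+1}\notin\varphi_\tbf(k+1)$, which in particular gives $x\frestr[n_{k+1},m_{k+1})\notin\tilde S_{k+1}^\tbf$, so that clause contributes fewer than $2^{n_{k+1}-m_k-(k+1)}=2^{|\tilde I_k|-k-1}$ elements. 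Adding the two bounds yields $|\psi_\tbf^x(k)|<3\cdot 2^{|\tilde I_k|-k-1}<2^{|\tilde I_k|-k+1}$, i.e.\ $|\psi_\tbf^x(k)|/2^{|\tilde I_k|}<1/2^{k-1}$.

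For~\ref{U2smallmaind}, the definition of $z_{x,y}$ yields the two identities
\[z_{x,y}\frestr I_k = x\frestr[n_k,m_k)\,{}^{\frown}\,y\frestr \tilde I_k,\qquad z_{x,y}\frestr I'_k = y\frestr \tilde I_k\,{}^{\frown}\,x\frestr[n_{k+1},m_{k+1}),\]
so each of the conditions $z_{x,y}\frestr I_k\in\varphi(k)$ and $z_{x,y}\frestr I'_k\in\varphi'(k)$ translates precisely into $y\frestr\tilde I_k\in\psi_\tbf^x(k)$ (via the first and second clause, respectively). Since by hypothesis this only happens finitely often, $z_{x,y}\notin N_\varphi\cup N_{\varphi'}$. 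The main obstacle, if any, is the index bookkeeping in~\ref{U2smallmainc}: one must notice that the second clause of $\psi_\tbf^x(k)$ forces using the hypothesis at index $k+1$ rather than $k$, which is exactly what allows the same slalom $\varphi_\tbf$ to simultaneously handle both $\varphi$ and $\varphi'$ in~\ref{U2smallmaind}.
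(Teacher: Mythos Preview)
Your proposal is correct and follows essentially the same route as the paper: double counting against the bound~\ref{def:2smallb} for~\ref{U2smallmaina}, a union bound for~\ref{U2smallmainb}, and the observation that $x\frestr[n_k,m_k)\notin S_k^\tbf$ and $x\frestr[n_{k+1},m_{k+1})\notin\tilde S_{k+1}^\tbf$ control the two clauses of $\psi_\tbf^x(k)$ in~\ref{U2smallmainc}. Your remark that the second clause requires the hypothesis at index $k+1$ is exactly the point the paper uses implicitly in its estimate $\frac{2^{n_{k+1}-m_k-k}}{2^{n_{k+1}-m_k}}+\frac{2^{n_{k+1}-m_k-(k+1)}}{2^{n_{k+1}-m_k}}$.
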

\begin{proof}
\noindent\ref{U2smallmaina}: We just estimate the size of $S_k^\tbf$. For $k\in\omega$, observe that 
\[|S_k^\tbf|\cdot2^{n_{k+1}-m_{k}-k}\leq|\varphi(k)|.\]
Therefore,
\[|S_k^\tbf|\leq2^{k}\cdot\frac{|\varphi(k)|}{2^{n_{k+1}-n_{k}}}\cdot2^{m_{k}-n_{k}},\]
which implies by~\autoref{def:2small}~\ref{def:2smallb} that
\[\frac{|S_k^\tbf|}{2^{m_{k}-n_{k}}}\leq2^k\cdot\frac{|\varphi(k)|}{2^{n_{k+1}-n_{k}}}<\frac{1}{2^{k+2}}.\]
In a similar way, we can show that
\[\frac{|\tilde S_k^\tbf|}{2^{m_{k}-n_{k}}} < \frac{1}{2^{k}}.\]

\noindent\ref{U2smallmainb}: From~\ref{U2smallmaina} because $\frac{\varphi_\tbf(k)}{2^{|I_k|}}\leq \frac{|S_k^\tbf|}{2^{m_k-n_k}} + \frac{|\tilde S_k^\tbf|}{2^{m_k-n_k}}$.

\noindent\ref{U2smallmainc}: Notice that for all $k\geq k_0$,
\[\frac{|\psi_\tbf^x(k)|}{2^{|\tilde I_k|}}=\frac{|\psi_\tbf^x(k)|}{2^{n_{k+1}-m_k}}< \frac{2^{n_{k+1}-m_k-k}}{2^{n_{k+1}-m_k}}+\frac{2^{n_{k+1}-m_k-(k+1)}}{2^{n_{k+1}-m_k}}<\frac{1}{2^{k-1}}.\]

\noindent\ref{U2smallmaind}: Let $z:=z_{x,y}$. The assumption states that, for all but finitely many $k<\omega$, $x\frestr [n_k,m_k){}^\frown y\frestr [m_k,n_{k+1}\notin \varphi(k)$ and $y\frestr [m_k,n_{k+1}){}^\frown x\frestr [n_{k+1},m_{k+1})\notin \varphi'(k)$, which indicates that $z\frestr I_k\notin \varphi(k)$ and $z\frestr I'_k\notin \varphi'(k)$. Therefore, $z\notin N_\varphi\cup N_{\varphi'}$.
\end{proof}

\begin{proof}[Proof of \autoref{alcCh}]
\ 
\noindent\eqref{alcChA}:
In view of~\autoref{Tukalcov}~\ref{Tukone}, the second inequality is easily established.  Hence, we prove the other inequality 
 \[\min\left(\{\bfrak\} \cup \set{\balc_{b, h}}{b, h\in\omega^\omega\text{
 and }\sum_{n\in\omega}\frac{h(n)}{b(n)}<\infty}\right)\leq\cov(\Nwf).\]
Assume that 
\begin{align*}\label{alcChAs}
 \kappa<\min\left(\{\bfrak\}\cup\set{\balc_{b, h}}{b,h\in\omega^\omega\text{ and }\sum_{n\in\omega}\frac{h(n)}{b(n)}<\infty}\right)\tag{$\bigstar$}   
\end{align*}
and $\set{G_\alpha}{\alpha<\kappa}\subseteq\Nwf$. We prove that $\bigcup_{\alpha<\kappa}G_\alpha\neq2^\omega$.

By~employing~\autoref{U2small}, we can find an $L\in \Ior$ and \twosmall s $\tbf^\alpha=(L,\varphi_\alpha,\varphi'_\alpha)$ for $\alpha<\kappa$ such that, for any $\alpha<\kappa$, $G_\alpha \subseteq N_{\varphi_\alpha}\cup N_{\varphi'_\alpha}$. %Next, for $k\in\omega$ and $\alpha<\kappa$ let 
%$ S_k^{\tbf_\alpha}$ and $\tilde S_k^{\tbf_\alpha}$ be as in~\autoref{U2smallmain}. 
By~\autoref{U2smallmain}~\ref{U2smallmainb}, we can obtain a family $\set{\varphi_{\tbf_{\alpha}}}{\alpha<\kappa}\subseteq\Swf(b,h)$ by letting $b(k):=2^{I_{k}}$ and $h(k):=\frac{2^{|I_{k}|}}{2^{k-1}}$. Then, by employing~\eqref{alcChAs}, we can find an $x\in 2^\omega$ such that, for all $\alpha<\kappa$, $\forall^\infty\, k\colon x\frestr I_k\notin\varphi_{\tbf_{\alpha}}(k)$. 
%This allow us to find a $x\in2^{\omega}$ such that $x\notin \set{x\in2^\omega}{\exists^{\infty}k\in\omega:x\frestr I_k\in \psi_{\tbf_{\alpha}}(k)}$. 
%Now since $\forall^\infty\, k<\omega\colon x\frestr I_k\notin \psi_{\tbf_{\alpha}}(k)$, 
Hence, by~\autoref{U2smallmain}~\ref{U2smallmainc}, we find a family $\set{\psi^x_{\tbf_{\alpha}}}{\alpha<\kappa}\subseteq\Swf(\tilde b,\tilde h)$ by setting $\tilde b(k):=2^{|\tilde I_k|}$ and $\tilde h(k) := \frac{2^{|\tilde I_k|}}{2^{k-1}}$. Using (\ref{alcChAs}) again, we can find a $y\in2^{\omega}$ such that, for all $\alpha<\kappa$, $\forall^{\infty}\, k\in\omega\colon y\frestr\tilde I_k\notin \psi^x_{\tbf_{\alpha}}(k)$. 

Finally, %since we have that $\forall^\infty\, k<\omega\colon x\frestr I_k\notin \psi_{\tbf_{\alpha}}(k)$ and $\forall^{\infty}k\in\omega:y\frestr\tilde I_k\in \psi^x_{\tbf_{\alpha}}(k)$, 
by~\autoref{U2smallmain}~\ref{U2smallmaind}, we can find a real $z:=z_{x,y}\in 2^\omega$ such that $z\notin N_{\varphi_\alpha}\cup N_{\varphi'_\alpha}$ for all $\alpha<\kappa$. Since $G_\alpha \subseteq N_{\varphi_\alpha}\cup N_{\varphi'_\alpha}$ for all $\alpha<\kappa$, we obtain $z\notin \bigcup_{\alpha<\kappa}G_\alpha$.%, which shows what we wanted to do.  

\noindent\eqref{alcChB}: The first inequality follows easily by \autoref{Tukalcov}~\ref{Tukone}, so we show that
\[\non(\Nwf)\leq\sup\left(\{\dfrak\}\cup\set{\dalc_{b, h}}{b, h\in\omega^\omega\text{ and }\sum_{n\in\omega}\frac{h(n)}{b(n)}<\infty}\right).\]Pick an $\Ior$-dominating family $D\subseteq\Ior$ of partitions such that $|D|=\dfrak$ and, for all $L\in D$ and $k<\omega$, $|L_k|>k+3$. For $L\in D$ let $L_{2k}=[n^L_k,m^L_k)$, $L_{2k+1}=[m^L_k,n^L_{k+1})$ and define $I^L_k:=[n^L_k,n^L_{k+1})$ and $\tilde I^L_k:=[m^L_k,n^L_{k+1})$. Define $b^L(k):=2^{I^L_k}$, $\tilde{b}^L(k):=2^{\tilde I^L_k}$, $h^L(k)=\frac{2^{|I^L_k|}}{2^{k-1}}$ and $\tilde{h}^L(k):=\frac{2^{|\tilde I^L_k|}}{2^{k-1}}$, and choose witnesses $F^L$ and $\tilde F^L$ of $\dalc_{b^L,h^L}$ and $\dalc_{\tilde b^L,\tilde h^L}$, respectively. For $x\in F^L$ and $y\in \tilde F^L$, define $z^L_{x,y}$ as in \autoref{U2smallmain}~\ref{U2smallmaind}.

To prove the desired inequality, it is enough to show that the set \[Z:=\set{z^L_{x,y}}{x\in F^L,\ y\in\tilde F^L,\ L\in D}\] is not in $\Nwf$. Let $G\in \Nwf$, so by \autoref{U2small} there is some $L\in D$ and some \twosmall\ $\tbf=(L,\varphi,\varphi')$ such that $G\subseteq N_\varphi\cup N_{\varphi'}$. By \autoref{U2smallmain}~\ref{U2smallmainb}, $\varphi_\tbf\in\Swf(b^L,h^L)$, so there is some $x\in F^L$ such that $\forall^\infty\, k<\omega\colon x\frestr I_k\notin \varphi_\tbf(k)$. Then, by \autoref{U2smallmain}~\ref{U2smallmainc}, $\psi^x_\tbf\in\Swf(\tilde b^L,\tilde h^L)$, so there is some $y\in \tilde F^L$ such that $\forall^\infty\, k<\omega\colon y\frestr \tilde I_k\notin \psi_\tbf(k)$. Therefore, by \autoref{U2smallmain}~\ref{U2smallmaind}, $z^L_{x,y}\notin N_\varphi\cup N_{\varphi'}$, so $z^L_{x,y}\notin G$. Therefore, $Z\nsubseteq G$ for any $G\in\Nwf$, so $Z\notin \Nwf$.
\qedhere

\end{proof}

\section{Measure and weak localization}\label{sec:ELc}

We present a weakness of the localization structure and show its connection with the $\sigma$-ideal $\Ewf$ generated by the closed measure zero sets. Specifically, we use weak localization cardinals to characterize $\cov(\Ewf)$ and $\non(\Ewf)$ in \autoref{thm:chcovE}. 
%Now, we provide more characterizations described by the localization cardinal, specifically, we focus on the characterizations for the covering and uniformity of the $\sigma$-ideal $\Ewf$ generated by the closed measure zero sets. 
The results of this section are originally due to Bartoszy\'nski and Shelah~\cite{BS1992}, and reformulated in~\cite{Car23}.

%To start, we first review a few combinatorial facts about $\Ewf$ as developed in~\cite{Car23} but initially introduced in \cite{BS1992}.

We use the following sets of slaloms to find suitable bases of $\Ewf$.

\begin{definition}\label{Setslalom}
 For an increasing function $b\in\omega^\omega$, define the following sets of slaloms:
 \begin{enumerate}[label=\normalfont(\arabic*)]
     %\item $\Swf_{b}:=\set{\varphi\in\prod_{n<\omega}\Pwf(b(n))}{\sum_{n\in\omega}\frac{|\varphi(n)|}{b(n)}<\infty}$

     \item $\Swf_{b}:=\set{\varphi\in\prod_{n<\omega}\Pwf(b(n))}{\forall\, n\in\omega\colon \frac{|\varphi(n)|}{2^{b(n+1)-b(n)}} \leq \frac{1}{2^n}}$.

     \item $\Swfw_{b}:=\set{\varphi\in\prod_{n<\omega}\Pwf(b(n))}{\exists^\infty\, n\in\omega\colon \frac{|\varphi(n)|}{2^{b(n+1)-b(n)}} \leq \frac{1}{2^n}}$.
 \end{enumerate}
\end{definition}

Note that $\Swf_b\subseteq\Swfw_b$ and that
\[\begin{split}
    \varphi\in\Swfw_b & \text{ implies }\liminf_{n\to\infty}\frac{|\varphi(n)|}{|b(n)|}<1,\\
    \varphi\in\Swf_b & \text{ implies }\limsup_{n\to\infty}\frac{|\varphi(n)|}{|b(n)|}<1.
\end{split}\]
Also observe that, if $b$ is a function with domain $\omega$ such that $b(i)\neq\vacio$ for all $i<\omega$, and  $h\in\omega^\omega$, then 

\begin{align*}\label{remarkE}
\liminf_{n\to\infty}\frac{h(n)}{|b(n)|}<1 & \text{\ iff\ }\forall\, m<\omega\colon \prod_{n\geq m}\frac{h(n)}{|b(n)|}=0,\tag{$\Diamond$}\\
 \limsup_{n\to\infty}\frac{h(n)}{|b(n)|}<1 & \text{\ iff\ }\forall\, A\in[\omega]^{\aleph_0}\colon \prod_{n\in A}\frac{h(n)}{|b(n)|}=0.\notag 
\end{align*}

We can constructs sets in $\Ewf$ using slaloms in the following way.

\begin{lemma}\label{toolE}
%In~\autoref{Setslalom} assume that there is a function 
Let $\tilb\in\omega^\omega$ be increasing and $b(n):=2^{\tilb(n+1)-\tilb(n)}$ for all $n\in\omega$. If $\varphi\in\prod_{n<\omega}\pts(b(n))$ and $\liminf_{n\to\infty}\frac{|\varphi(n)|}{|b(n)|}<1$, then the set
\[H_{\tilb,\varphi}:=\set{x\in2^\omega}{\forall^\infty\, n\in\omega\colon x{\upharpoonright}[\tilb(n), \tilb(n+1))\in\varphi(n)}\]
belongs to $\Ewf$.
\end{lemma}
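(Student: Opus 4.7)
The plan is to exhibit $H_{\tilb,\varphi}$ explicitly as an $F_\sigma$ set of Lebesgue measure zero, which by definition places it in $\Ewf$. The natural decomposition is
\[H_{\tilb,\varphi}=\bigcup_{m<\omega} F_m,\qquad F_m:=\set{x\in 2^\omega}{\forall\, n\geq m\colon x{\upharpoonright}[\tilb(n),\tilb(n+1))\in\varphi(n)},\]
since $x\in H_{\tilb,\varphi}$ iff $x$ belongs to some $F_m$. First I would verify that each $F_m$ is closed: for fixed $n\geq m$ the condition $x{\upharpoonright}[\tilb(n),\tilb(n+1))\in\varphi(n)$ carves out a clopen subset of $2^\omega$ (a finite union of cylinders determined by a finite initial segment), and $F_m$ is the intersection over $n\geq m$ of these clopen sets. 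Hence $H_{\tilb,\varphi}$ is indeed $F_\sigma$.

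Next I would compute $\Lb_2(F_m)$. Because the blocks $[\tilb(n),\tilb(n+1))$ for $n\geq m$ are pairwise disjoint, the events ``$x{\upharpoonright}[\tilb(n),\tilb(n+1))\in\varphi(n)$'' are independent with respect to $\Lb_2$, and the event corresponding to index $n$ has probability $\frac{|\varphi(n)|}{2^{\tilb(n+1)-\tilb(n)}}=\frac{|\varphi(n)|}{|b(n)|}$. Since the coordinates below $\tilb(m)$ are unconstrained,
\[\Lb_2(F_m)=\prod_{n\geq m}\frac{|\varphi(n)|}{|b(n)|}.\]
Now I would invoke the equivalence~\eqref{remarkE} with $h(n):=|\varphi(n)|$: the hypothesis $\liminf_{n\to\infty}\frac{|\varphi(n)|}{|b(n)|}<1$ gives $\prod_{n\geq m}\frac{|\varphi(n)|}{|b(n)|}=0$ for every $m<\omega$, so each $F_m$ is a closed measure zero set. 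Therefore $H_{\tilb,\varphi}$ is a countable union of closed null sets, and belongs to $\Ewf$, finishing the proof.

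The only point that needs any care is the independence used when computing $\Lb_2(F_m)$, but this is just the product-measure property applied to the disjoint coordinate blocks $[\tilb(n),\tilb(n+1))$, so the argument is essentially mechanical once the decomposition $H_{\tilb,\varphi}=\bigcup_m F_m$ is in hand.
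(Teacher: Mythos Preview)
Your proof is correct and follows essentially the same approach as the paper: the same decomposition $H_{\tilb,\varphi}=\bigcup_m F_m$ (the paper calls them $B^m_{\tilb,\varphi}$), the same product-measure computation $\Lb_2(F_m)=\prod_{n\geq m}\frac{|\varphi(n)|}{|b(n)|}$, and the same appeal to~\eqref{remarkE} to conclude each factor-product vanishes. Your version is slightly more explicit about why each $F_m$ is closed, but otherwise the arguments coincide.
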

\begin{proof}
Notice that $H_{\tilb,\varphi}$ is a countable union of the closed null sets  
\[B_{\tilb, \varphi}^m:=\set{x\in2^\omega}{\forall\, n\geq m\colon x{\upharpoonright}[\tilb(n), \tilb(n+1))\in\varphi(n)}\text{ for }m\in\omega.\] 
Indeed,  
\begin{align*}
\Lb(B_{\tilb, \varphi}^m)=&\prod_{n\geq m}\Lb\left(\set{x\in2^\omega}{x{\upharpoonright}[\tilb(n), \tilb(n+1))\in\varphi(n)}\right)\\
=&\prod_{n\geq m}\frac{|\varphi(n)|}{2^{\tilb(n+1)-\tilb(n)}} =0,
% \leq&\prod_{n\in A}\frac{1}{2^n}=0 \textrm{\ where\ } A=\bigg\{n\geq m:\frac{|\varphi(n)|}{2^{\tilb(n+1)-\tilb(n)}}\leq\frac{1}{2^n}\bigg\},
\end{align*}
where the latter equality holds by~\eqref{remarkE}. Hence $\Lb(B_{\tilb, \varphi}^m)=0$ for all $m<\omega$, so   
\[H_{\tilb, \varphi}=\bigcup_{m<\omega} B_{\tilb, \varphi}^m\] is an $F_\sigma$ null set and thus belongs to $\Ewf$.
\end{proof}

\begin{corollary}\label{cor:toolE}
    Let $b\in\omega^\omega$ increasing. If $\varphi\in \Swfw_b$ then $H_{b,\varphi}\in\Ewf$.
\end{corollary}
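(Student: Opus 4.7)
The plan is to reduce the corollary directly to \autoref{toolE} by renaming parameters and extracting the relevant $\liminf$ condition from the definition of $\Swfw_b$.

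Specifically, I would apply \autoref{toolE} with $\tilb := b$, so that the $b$ appearing inside \autoref{toolE} becomes the auxiliary sequence $n\mapsto 2^{b(n+1)-b(n)}$. Under this identification, the set $H_{\tilb,\varphi}$ of the lemma coincides verbatim with the set $H_{b,\varphi}$ defined in the corollary's statement (both consist of those $x\in 2^\omega$ such that $x\frestr[b(n),b(n+1))\in\varphi(n)$ for all but finitely many $n$).

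The only thing to check is that the hypothesis of \autoref{toolE} is met, namely that
\[\liminf_{n\to\infty}\frac{|\varphi(n)|}{2^{b(n+1)-b(n)}}<1.\]
This follows straight from the definition of $\Swfw_b$: by assumption there exist infinitely many $n$ with $\frac{|\varphi(n)|}{2^{b(n+1)-b(n)}}\leq \frac{1}{2^n}$, and along this subsequence the quotient tends to $0$. Hence the $\liminf$ is $0$, which is certainly strictly less than $1$. Invoking \autoref{toolE} then yields $H_{b,\varphi}\in\Ewf$.

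There is essentially no obstacle here; the statement is an immediate specialization of \autoref{toolE}. The only point requiring a line of care is the notational overloading between the corollary (where $b$ is the increasing integer sequence parametrizing the intervals) and the lemma (where $b$ denotes the derived sequence of sizes $2^{\tilb(n+1)-\tilb(n)}$), but this is handled by the substitution $\tilb:=b$ described above.
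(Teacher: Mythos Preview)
Your proposal is correct and matches the paper's approach exactly: the paper states the corollary with no proof, treating it as an immediate consequence of \autoref{toolE}, and your write-up simply makes explicit the substitution $\tilb:=b$ and the verification of the $\liminf$ hypothesis that the paper leaves implicit.
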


Thanks to the foregoing lemma, we obtain a basis of $\Ewf$.

\begin{lemma}[{\cite[Thm. 4.3]{BS1992}}]\label{lem:combE}
Suppose that $C\in\Ewf$. Then there is some increasing $\tilb \in\omega^\omega$ and some $\varphi\in\Swf_{\tilb}$ such that $C\subseteq H_{\tilb, \varphi}$.
\end{lemma}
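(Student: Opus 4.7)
The plan is to reduce $C$ to a countable increasing union of closed null sets and then build a single slalom on a fast-growing partition of $\omega$ that captures all of them at once. First, since $\Ewf$ is generated by closed null sets, I would write $C \subseteq \bigcup_{k<\omega}[T_k]$ with trees $T_k \subseteq T_{k+1}$ in $2^{<\omega}$ whose branch sets have Lebesgue measure zero; then for each fixed $k$, $|T_k \cap 2^m|/2^m \to 0$ as $m\to\infty$, and this quantitative decay is the whole engine of the construction.

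Next, I would recursively pick the increasing $\tilb\in\omega^\omega$ by setting $\tilb(0):=0$ and, given $\tilb(n)$, choosing $\tilb(n+1) > \tilb(n)$ large enough that
\[
|T_k \cap 2^{\tilb(n+1)}| \;\leq\; 2^{\tilb(n+1) - \tilb(n) - n - k - 2}
\qquad \text{for every } k \leq n.
\]
This is possible because at step $n$ only finitely many trees are involved, and for each such $k$ the density $|T_k\cap 2^m|/2^m$ can be made arbitrarily small by taking $m$ large. I would then define
\[
\varphi(n) \;:=\; \bigcup_{k\leq n}\bigl\{\, t\!\upharpoonright\![\tilb(n),\tilb(n+1)) \,:\, t\in T_k\cap 2^{\tilb(n+1)}\,\bigr\}.
\]
Summing the density bounds through a geometric series in $k$ gives $|\varphi(n)| \leq 2^{\tilb(n+1)-\tilb(n)-n-1}$, hence $|\varphi(n)|/2^{\tilb(n+1)-\tilb(n)}\leq 1/2^n$ and $\varphi\in\Swf_{\tilb}$. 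The inclusion $C\subseteq H_{\tilb,\varphi}$ follows immediately: if $x\in C$ then $x\in[T_k]$ for some $k$, and for every $n\geq k$ we have $x\!\upharpoonright\!\tilb(n+1)\in T_k$, so $x\!\upharpoonright\![\tilb(n),\tilb(n+1))\in\varphi(n)$.

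The only delicate point I anticipate is calibrating the exponents $n+k+2$ in the recursive density requirement so that the geometric sum $\sum_{k\leq n}2^{-k-2}$ absorbs exactly the factor needed to land the slalom density at $1/2^n$; the rest amounts to routine measure-on-trees manipulation combined with the increasing nature of the $T_k$.
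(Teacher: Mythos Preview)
Your proof is correct and follows essentially the same route as the paper: decompose $C$ into an increasing sequence of closed null sets, use the density decay $|T_k\cap 2^m|/2^m\to 0$ to recursively grow $\tilb$ fast enough, and restrict to the blocks $[\tilb(n),\tilb(n+1))$. The only difference is cosmetic: since you assume $T_k\subseteq T_{k+1}$, your union $\bigcup_{k\le n}$ in the definition of $\varphi(n)$ collapses to the single term $k=n$, and likewise your $n+1$ density conditions at step $n$ reduce to the one for $k=n$; the paper's proof simply works with that single term from the start (using $C_n$ and a threshold $1/4^{\tilb(n)}$ rather than your $2^{-\tilb(n)-n-k-2}$).
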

\begin{proof}
Let us assume wlog that $C\subseteq2^\omega$ is a null set of type $F_\sigma$. Then $C$ can be written as $\bigcup_{n\in\omega}C_n$ where $\la C_n:\, n\in\omega\ra$ is an increasing family of closed sets of measure zero. 

Note that each $C_n$ is a compact set. It is easy to see that, if $K\subseteq2^{\omega}$ is a compact null set, then 
\[\forall\, \varepsilon>0\ \forall^{\infty}\, n\ \exists\, T\subseteq2^{n}\colon K\subseteq[T] \text{ and }\frac{|T|}{2^{n}}<\varepsilon\]
where $[T]:=\bigcup_{t\in T}[t]$.
Hence, we can define and increasing $\tilb\in\omega^\omega$ such that $\tilb(0):=0$ and 
\[\tilb(n+1):=\min\set{m>\tilb(n)}{\exists\, T_n\subseteq 2^{m}\colon C_n\subseteq[T_n]\text{ and }\frac{|T_n|}{2^m} < \frac{1}{4^{\tilb(n)}}}.\]
Next, choose $T_n\subseteq2^{\tilb(n+1)}$ such that $C_n\subseteq[T_n]$  and $\frac{|T_n|}{2^{\tilb(n+1)}} < \frac{1}{4^{\tilb(n)}}$. Now define, for $n\in\omega$, \[\varphi(n):=\set{s{\upharpoonright}[\tilb(n), \tilb(n+1))}{s\in T_n}.\]
It is clear that $|\varphi(n)|\leq |T_n|$ for every $n<\omega$, hence
\begin{equation*}\label{eqE}
  \frac{|\varphi(n)|}{2^{\tilb(n+1)-\tilb(n)}}<\frac{1}{2^{\tilb(n)}}\leq\frac{1}{2^n}. 
\end{equation*}
Thus, $\varphi\in\Swf_{\tilb}$ and, by~\autoref{cor:toolE}, $H_{\tilb, \varphi}\in\Ewf$. We also have $C\subseteq H_{\tilb, \varphi}$. 
\end{proof}

The following is a technical lemma that let us connect the structure of $\omega^\omega$ with $\Ewf$. This will be essential to prove the main theorems of this section.

\begin{lemma}\label{lem:mon}
Let $b_0,b_1\in\omega^\omega$ be increasing functions, let $b_1^*\in\omega^\omega$ be as in~\autoref{Charb-d}, and let $\varphi\in\Swf_{b_0}$.
\begin{enumerate}[label=\normalfont(\arabic*)]
    \item\label{lem:monb} If $b_1\not\leq^*b_0$ then there is a $\varphi^*\in\Swfw_{b_1^*}$ such that $H_{b_0, \varphi}\subseteq H_{b_1^*, \varphi^*}$.
    
    \item\label{lem:mona} If $b_0\leq^*b_1$ then there is a $\varphi_*\in\Swf_{b_1^*}$ such that $H_{b_0, \varphi}\subseteq H_{b_1^*, \varphi_*}$.
\end{enumerate}
\end{lemma}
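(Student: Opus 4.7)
For both parts I take $J^*_n := [b_1^*(n), b_1^*(n+1))$ and $A_n := \{k<\omega : [b_0(k), b_0(k+1)) \subseteq J^*_n\}$, and I define the slalom uniformly by
\[\varphi^*(n) := \{s \in 2^{J^*_n} : s\frestr[b_0(k), b_0(k+1)) \in \varphi(k) \text{ for every } k \in A_n\}\]
(so $\varphi^*(n) = 2^{J^*_n}$ when $A_n = \varnothing$). Counting yields
\[\frac{|\varphi^*(n)|}{2^{|J^*_n|}} \leq \prod_{k \in A_n}\frac{|\varphi(k)|}{2^{b_0(k+1)-b_0(k)}} \leq \prod_{k \in A_n} 2^{-k} \leq 2^{-\max A_n},\]
and the containment $H_{b_0,\varphi} \subseteq H_{b_1^*,\varphi^*}$ is automatic, since $b_0(\min A_n) \geq b_1^*(n) \to \infty$ forces $\min A_n \to \infty$, so eventually every $k \in A_n$ lies in the cofinite tail on which $x\frestr[b_0(k),b_0(k+1)) \in \varphi(k)$. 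What remains is to lower-bound $\max A_n$.

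For part~(2), an easy induction gives $b_1(n) \leq b_1^*(n)$ for $n \geq 1$, so $b_0 \leq^* b_1^*$. Setting $k_n := \min\{k : b_0(k) \geq b_1^*(n)\}$ for $n$ large, the estimates $b_0(m) \geq m$ (from strict monotonicity) and $b_0(n) \leq b_1^*(n)$ pin down $n \leq k_n \leq b_1^*(n)$, and invoking $b_0 \leq b_1$ at the index $b_1^*(n)+1$ gives
\[b_0(k_n+1) \leq b_0(b_1^*(n)+1) \leq b_1(b_1^*(n)+1) = b_1^*(n+1),\]
so $k_n \in A_n$ and $\max A_n \geq n$. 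Hence the density is at most $2^{-n}$ for every large $n$, witnessing $\varphi_* \in \Swf_{b_1^*}$.

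For part~(1), I only need $\max A_n \geq n$ for \emph{infinitely many} $n$. I will first establish the auxiliary claim that $b_1 \not\leq^* b_0$ forces $b_1^* \not\leq^* b_0$: otherwise, ruling out $b_1 = \id$ (which would give $b_0(m) < m$ infinitely often, impossible), one has $b_1^*(n) > n$ eventually, and then each large $k$ with $b_1^*(n) < k \leq b_1^*(n+1)$ satisfies $k \geq n+2$, yielding $b_1(k) < b_1^*(n+2) \leq b_0(n+2) \leq b_0(k)$ and contradicting the hypothesis. So $b_0 < b_1^*$ infinitely often. Now I split on whether $b_0 \leq^* b_1^*$. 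In the \emph{tame case}, Example~\ref{Charb-d}(b) applied to $g = b_1$ and the partition induced by $b_0$ gives $I^{b_1} \triangleright I$, i.e., $A_n \neq \varnothing$ for infinitely many $n$; and $b_0(n) \leq b_1^*(n)$ then forces every $k \in A_n$ to satisfy $k \geq n$. In the \emph{wild case} $b_0 \not\leq^* b_1^*$, the function $f(n) := b_0(n) - b_1^*(n)$ is positive and non-positive on cofinal sets, and a short pigeonhole (otherwise $\{f > 0\}$ would be upward closed past some point, forcing $\{f \leq 0\}$ to be finite) supplies infinitely many $n$ with $f(n) > 0$ and $f(n+1) \leq 0$; for each such $n$ the index $k = n$ belongs to $A_n$, so $\max A_n \geq n$. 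Either way the density is at most $2^{-n}$ infinitely often, so $\varphi^* \in \Swfw_{b_1^*}$.

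The main obstacle is exactly this case split in part~(1): the weak hypothesis $b_1 \not\leq^* b_0$ does not dictate how $b_0$ and $b_1^*$ compare pointwise, so merely extracting ``$A_n \neq \varnothing$ infinitely often'' from Example~\ref{Charb-d}(b) is insufficient—one really needs $\max A_n \geq n$, and the sign-change pigeonhole for the wild case is the step that rescues this.
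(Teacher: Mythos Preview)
Your proof is correct; the uniform definition via $A_n$ and the counting and containment arguments go through. (One trivial fix: in part~(2) the density bound only holds for large $n$, so reset $\varphi_*(n):=\varnothing$ on the finite bad set to land literally in $\Swf_{b_1^*}$; the paper does exactly this.) For part~(1) you take a genuinely different route from the paper. The paper bypasses both your auxiliary claim $b_1^*\not\leq^* b_0$ and the tame/wild split: it directly picks any $m$ with $b_0(m{+}1)<b_1(m{+}1)$, locates the $n$ with $b_0(m)\in J_n^*$, and runs a short case analysis (on whether $b_0(m{+}1)\leq b_1^*(n{+}1)$, and if not on whether $b_0(m{-}1)\geq b_1^*(n)$) to exhibit either $I_m$ or $I_{m-1}$ inside $J_n^*$ with index at least $n$---one subcase being contradictory. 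Your detour through $b_1^*\not\leq^* b_0$ and the sign-change pigeonhole is longer but makes the dichotomy conceptually explicit; the paper's argument is more economical and never needs to compare $b_0$ with $b_1^*$ globally.
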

\begin{proof}
Let $I_n:=[b_0(n),b_0(n+1))$ and $I_n^*:=[b_1^*(n), b_1^*(n+1))$ for all $n<\omega$.

\noindent\ref{lem:monb}: According to~\autoref{Charb-d}, if $b_1\not\leq^*b_0$, then $\exists^\infty\, n\ \exists\, m\colon I_m\subseteq I_n^*$.
This allows us to define $\varphi^*(n)$ for each $n\in\omega$ as follows: \[\varphi^*(n)
   := 
    \begin{cases}
    \set{s\in 2^{I_n^*}}{I_m\subseteq I_n^*\text{ and } s{\upharpoonright} I_m\in \varphi(m)} & \text{if $m$ is minimal such that }\\
               & \text{$n\leq m$ and $I_m\subseteq I_n^*$;} \\ %&\textrm{  $(\dot I_m\subseteq I_n^*)$}\\
       2^{I_n^*} & \textrm{if such $m$ does not exist.}\\
    \end{cases}
\]
It remains to show that $\varphi^*\in\Swfw_{b_1^*}$, i.e.\ there are infinitely many $n$ such that, for some $m\geq n$,  $I_m\subseteq I_n^*$.
Let $0<m<\omega$ such that $b_0(m+1)<b_1(m+1)$ (there are infinitely many of those because $b_1\nleq^* b_0$), and choose $n<\omega$ such that $b_0(m)\in I_n^*$, that is, $b_1^*(n)\leq b_0(m)<b_1^*(n+1)$. We distinguish two cases:

\noindent\textbf{Case 1:} $b_0(m+1)\leq b_1^*(n+1)$. Then $b_1(n)\leq b_1^*(n)\leq b_0(m)< b_0(m+1)\leq b_1^*(n+1)$, so $I_m\subseteq I_n^*$. On the other hand $b_1(n)<b_0(m+1)<b_1(m+1)$, so $n<m+1$, i.e.\ $n\leq m$.

\noindent\textbf{Case 2:} $b_0(m+1)>b_1^*(n+1)$. We further distinguish two subcases:

\noindent\textbf{Subcase 2.1:} $b_0(m-1)\geq b_1^*(n)$. In this case, we have $I_{m-1}\subseteq I_n^*$. Then $b_1(n+1)\leq b_1^*(n+1)< b_0(m+1)<b_1(m+1)$, so $b_1(n+1)<b_1(m+1)$, which implies that $n+1<m+1$, that is, $n\leq m-1$.

\noindent\textbf{Subcase 2.2:} $b_0(m-1)< b_1^*(n)$. We will obtain a contradiction in this case.  Note that $m-1\leq b_0(m-1)<b_1^*(n)$, so $m\leq b_1^*(n)$. Then $b_1(m+1)\leq b_1(b_1^*(n)+1)=b_1^*(n+1)$.

On the other hand, $b_0(m+1)>b_1^*(n+1)$, so $b_1^*(n+1)<b_0(m+1)<b_1(m+1)\leq b_1^*(n+1)$, which gives a contradiction.

We have proved that there are infinitely many $n$ such that, for some $m\geq n$, $I_m\subseteq I_n^*$. Hence $\varphi^*\in\Swfw_{b_1^*}$. Indeed, for such $n$ and $m$, with minimal $m$,
\[\frac{|\varphi^*(n)|}{2^{|I_n^*|}}=\frac{|\varphi(m)|}{2^{|I_m|}}\leq\frac{1}{2^m}\leq\frac{1}{2^n}.\]
Therefore, $H_{b_0, \varphi}\subseteq H_{b_1^*, \varphi^*}$.

\noindent\ref{lem:mona}:  Once again, according to~\autoref{Charb-d}, note that if $b_0\leq^*b_1$ then $\forall^\infty\, n\ \exists\, m\colon I_m\subseteq I_n^*$.
For each $n\in\omega$, %set $\varphi_*(n)$ by 
\[\varphi_*(n)
   := 
    \begin{cases}
    \set{s\in 2^{I_n^*}}{I_m\subseteq I_n^*\text{ and } s{\upharpoonright} I_m\in \varphi(m)} & \text{if $m$ is minimal such that }\\
               & \text{$n\leq m$ and $I_m\subseteq I_n^*$;} \\ %&\textrm{  $(\dot I_m\subseteq I_n^*)$}\\
       \emptyset & \text{if such $m$ does not exist.}\\
    \end{cases}
\]
An argument as in~\ref{lem:monb} guarantees that for all but finitely many $n$ there is some $m\geq n$ such that  $I_m\subseteq I_n^*$. This implies that $\varphi_*\in\Swf_{b_1^*}$  
and $H_{b, \varphi}\subseteq H_{b_1^*, \varphi_*}$.
\end{proof}

%We also use the following variation of the relational system $\Lc(b,h)$ to prove our main results. %a characterization of $\cov(\Ewf)$ and  in terms of the $\dfrak$-localization cardinals.

We present the following natural weakening of localization, which is inspired from~\cite{Br-wloc}.

\begin{definition}\label{def:Lc*}
Let $b$ be a function with domain $\omega$ such that $b(i)\neq\emptyset$ for all $i<\omega$, and let $h\in\omega^\omega$.
Define 
%\[\Swf_*(b,h):=\set{\varphi\in\prod_{n<\omega}\Pwf(b(n))}{\exists^\infty\, n\in\omega\colon |\varphi(n)|\leq h(n))}.\] 
\[\Swfw(b,h):=\bigcup_{A\in[\omega]^{\aleph_0}}\Swf(b\frestr A,h\frestr A),\]
i.e., the set of all partial slaloms ``in $\Swf(b,h)$" with infinite domain.
Let $\wLc(b,h):=\la\prod b,\Swfw(b,h),\in^*\ra$ be the relational system
where $x\in^* \varphi$ iff $\forall^\infty\, i\in\dom\varphi\colon x(i)\in \varphi(i)$ (extending the notation in \autoref{def:LcaLc}). Denote  $\bloc_{b,h}:=\bfrak(\wLc(b,h))$ and $\dloc_{b,h}:=\dfrak(\wLc(b,h))$. 
\end{definition}

% When $b\nleq^* 1$, consider the map $F_b:\prod b\to[0,1]$ defined by
% \[F_b(x):=\sum_{n<\omega}\frac{x(n)}{\prod_{i\leq n}b(i)}.\]
% This is a continuous onto function and preserves measure. Hence $F_b$ gives a connection between the ideal $\Ewf(\prod b)$ and $\Ewf([0,1])$. Even more, it can be shown that, for any $X\subseteq\prod b$ and for any $Y\subseteq[0,1]$, 
% \begin{enumerate}
%     \item[] $X\in\Ewf(\prod b)$ iff $f_b\llbracket X\rrbracket\in\Ewf([0,1])$

%     \item[] $Y\in\Ewf([0,1])$ iff $f_b\llbracket Y\rrbracket\in\Ewf(\prod b)$
% \end{enumerate}

% Since the Cantor space is a particular case $\prod b$, the map $F_2$ for $b(n)=2$ can be utilized to provide a connection between $\Ewf(2^\omega)$ and $\Ewf([0,1])$, so the value of the cardinal characteristics associated with $\Ewf$ does not depend on the Polish space we are working on. 

We can easily prove:

\begin{fct}\label{fct:Lc*}
    For any $A\subseteq\omega$ infinite, $\wLc(b,h)\leqT \Lc(b\frestr A, h\frestr A)$. In particular, $\dloc_{b,h}\leq \dlc_{b\frestr A, h\frestr A}$ and $\blc_{b\frestr A, h\frestr A}\leq \bloc_{b,h}$.
\end{fct}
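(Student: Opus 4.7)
The plan is to exhibit the Tukey connection directly via the obvious maps: restrict reals to $A$ on the left, and view a slalom with domain $A$ as a partial slalom (which it already is by definition of $\Swfw(b,h)$) on the right.

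More precisely, I define $\Psi_-\colon \prod b\to \prod(b\frestr A)$ by $\Psi_-(x):= x\frestr A$ and $\Psi_+\colon \Swf(b\frestr A, h\frestr A)\to \Swfw(b,h)$ by $\Psi_+(\varphi):= \varphi$. The map $\Psi_+$ is well-defined because $A$ is infinite, so $\varphi\in\Swf(b\frestr A, h\frestr A)$ is a partial slalom with infinite domain $A$, which places it in $\Swfw(b,h)$ by \autoref{def:Lc*}.

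The verification that this is a Tukey connection is immediate from unfolding the definitions of $\in^*$ in the two relational systems. If $\Psi_-(x)\in^* \varphi$ in $\Lc(b\frestr A,h\frestr A)$, this says $\forall^\infty\, i\in A\colon x(i)=(x\frestr A)(i)\in\varphi(i)$. Since $\dom\varphi=A$ when $\varphi$ is regarded as a partial slalom, this is precisely the statement $x\in^* \Psi_+(\varphi)$ in $\wLc(b,h)$ (using the extended meaning of $\in^*$ from \autoref{def:Lc*}). Thus $(\Psi_-,\Psi_+)$ witnesses $\wLc(b,h)\leqT \Lc(b\frestr A,h\frestr A)$.

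The ``in particular'' clause follows immediately from \autoref{cor:Tukeyval}~(b) applied to this Tukey connection, giving $\bfrak(\Lc(b\frestr A,h\frestr A))\leq \bfrak(\wLc(b,h))$ and $\dfrak(\wLc(b,h))\leq \dfrak(\Lc(b\frestr A,h\frestr A))$, i.e.\ $\blc_{b\frestr A, h\frestr A}\leq \bloc_{b,h}$ and $\dloc_{b,h}\leq \dlc_{b\frestr A,h\frestr A}$. There is no real obstacle here — the fact is essentially definitional, and the main thing to check is just that one's conventions about partial slaloms and the extended $\in^*$ relation line up, which they do by construction.
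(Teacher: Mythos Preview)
Your proof is correct and is exactly the natural definitional argument the paper has in mind; in fact the paper does not spell out a proof at all, writing only ``We can easily prove:'' before the statement. Your explicit Tukey connection via restriction and inclusion is precisely the intended one.
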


We obtain the following simple connection between $\Ewf$ and weak localization.

\begin{lemma}\label{uppbE}
If $b\in\omega^\omega$ and $\limsup_{n\to\infty}\frac{h(n)}{b(n)}<1$, then $\Cbf_\Ewf\leqT\wLc(b,h)$. In particular, $\cov(\Ewf)\leq\dloc_{b,h}$ and $\bloc_{b,h}\leq\non(\Ewf)$.
\end{lemma}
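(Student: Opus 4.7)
The plan is to exhibit an explicit Tukey connection $(\Psi_-,\Psi_+)\colon \Cbf_{\Ewf(\prod b)}\to \wLc(b,h)$ and then transfer it to $\Cbf_\Ewf$ via \autoref{TukeyPolish}\ref{TukeyPolishc} (which applies since the standing assumption $1\leq h(n)<b(n)\leq\omega$ ensures $\prod b$ is a perfect Polish space). Take $\Psi_-\colon \prod b\to\prod b$ to be the identity. For each $\varphi\in\Swfw(b,h)$ with $\dom\varphi = A\in[\omega]^{\aleph_0}$, define
\[
\Psi_+(\varphi) := \set{x\in\prod b}{\forall^\infty\, n\in A\colon x(n)\in\varphi(n)} = \bigcup_{m<\omega} B^m_\varphi,
\]
where $B^m_\varphi := \set{x\in\prod b}{\forall\, n\in A\cap[m,\infty)\colon x(n)\in\varphi(n)}$. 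With these definitions, $\Psi_-(x)\in^*\varphi$ is literally the condition defining membership in $\Psi_+(\varphi)$, so the Tukey property is immediate; the work is in checking that $\Psi_+(\varphi)\in\Ewf(\prod b)$.

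The key step is therefore to verify that each $B^m_\varphi$ is a closed $\Lb_b$-null set. Closedness is clear because $B^m_\varphi$ is an intersection of basic clopen cylinders determined by coordinates in $A\cap[m,\infty)$. For the measure, by independence of the coordinates,
\[
\Lb_b(B^m_\varphi) \;=\; \prod_{n\in A\cap[m,\infty)} \frac{|\varphi(n)|}{b(n)} \;\leq\; \prod_{n\in A\cap[m,\infty)} \frac{h(n)}{b(n)}.
\]
The hypothesis $\limsup_{n\to\infty} h(n)/b(n) <1$ supplies a constant $c<1$ and an $N<\omega$ with $h(n)/b(n)\leq c$ for all $n\geq N$; since $A$ is infinite, the above product vanishes once $m\geq N$, and hence for every $m$ (the partial tail for $m<N$ only contributes a finite positive factor, still multiplied by the vanishing tail). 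Thus $B^m_\varphi$ is closed of measure zero, so $\Psi_+(\varphi)$ is an $F_\sigma$ null set, i.e. belongs to $\Ewf(\prod b)$, as required.

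Combining Steps yields $\Cbf_{\Ewf(\prod b)}\leqT\wLc(b,h)$, and the Tukey equivalence $\Cbf_{\Ewf(\prod b)}\eqT\Cbf_\Ewf$ from \autoref{TukeyPolish}\ref{TukeyPolishc} gives $\Cbf_\Ewf\leqT\wLc(b,h)$. The ``in particular'' clause then follows from \autoref{cor:Tukeyval}\ref{cor:Tukeyval}(b) together with \autoref{CarCh}, which identifies $\dfrak(\Cbf_\Ewf)=\cov(\Ewf)$ and $\bfrak(\Cbf_\Ewf)=\non(\Ewf)$. There is no real obstacle beyond the infinite-product computation; the only mild care point is the need for $\prod b$ to be perfect so that the choice of Polish space for $\Ewf$ is irrelevant, which is guaranteed by the standing hypotheses on $b$ and $h$.
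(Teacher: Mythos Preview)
Your proof is correct and follows essentially the same approach as the paper: identity map on the first coordinate, $\Psi_+(\varphi)$ defined as the set of reals eventually in $\varphi$ on $\dom\varphi$, decomposed as a countable union of closed sets whose measure is a product that vanishes by the $\limsup<1$ hypothesis, with the transfer to $\Cbf_\Ewf$ via \autoref{TukeyPolish}\ref{TukeyPolishc}. The only cosmetic difference is that the paper invokes the equivalence~\eqref{remarkE} to conclude the product is zero, whereas you spell out the elementary argument directly.
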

\begin{proof}
Since $\Cwf_{\Ewf(2^\omega)}\eqT\Cwf_{\Ewf(\prod b)}$ by~\autoref{TukeyPolish}~\ref{TukeyPolishc}, we just need to prove that $\Cbf_{\Ewf(\prod b)}\leqT\wLc(b,h)$. We find a function  $\Psi_+\colon \Swfw(b,h)\to\Ewf(\prod b)$ such that for any $x\in\prod b$ and $\varphi\in\Swfw(b, h)$, if $x\in^*\varphi$, then $x\in\Psi_+(\varphi)$.

For $\varphi\in \Swfw(b,h)$ put 
\[\Psi_+(\varphi):=\set{x\in\prod b}{\forall^\infty\, n\in \dom\varphi\colon x(n)\in \varphi(n)}.\] 
We only have to check that $\Psi_+(\varphi)\in\Ewf(\prod b)$. Indeed, setting $A_m:=(\dom\varphi)\menos m$ and
\[B_{\varphi,m}:=\set{x\in\prod b}{\forall\, n\in A_m\colon x(n)\in\varphi(n)},\] 
we have
\begin{align*}
\Lb(B_{\varphi,m})&=\prod_{n\in A_m}\Lb\left(\set{x\in\prod b}{x(n)\in\varphi(n)}\right)\\
&=\prod_{n\in A_m}\frac{|\varphi(n)|}{b(n)}\\
&\leq\prod_{n\in A_m}\frac{h(n)}{b(n)}=0.
\end{align*}
(The last equality follows by~\eqref{remarkE}). Hence, $\Lb(B_{\varphi,m})=0$ for all $m\in\omega.$ Thus $\Psi_+(\varphi)=\bigcup_{m<\omega} B_{\varphi,m}$ is an $F_\sigma$ null set and belongs to $\Ewf(\prod b)$.
%
%It is clear that if $\forall^\infty n\,(x(n)\in \varphi(n))$ then $x\in \Psi_+(\varphi)$.
\end{proof}

%The following result was originally proved by Bartoszynski and Shelah~\cite{BS1992} and expanded by the first author in~\cite{Car23}.  
We are finally ready to prove the main result of this section.

\begin{theorem}[{\cite{BS1992},~\cite{Car23}}]\label{thm:chcovE}
\begin{multline*}\tag{A}
 \min\left(\{\dfrak\}\cup\set{\dloc_{b, h}}{b, h\in\omega^\omega\text{ and }\limsup_{n\to\infty}\frac{h(n)}{b(n)}<1}\right)\leq\cov(\Ewf)\\
 \leq\min\set{\dloc_{b, h}}{b, h\in\omega^\omega\text{ and }\limsup_{n\to\infty}\frac{h(n)}{b(n)}<1}.
 \label{ELc:A}
\end{multline*}
In particular, if  $\cov(\Ewf)<\dfrak$ then \[\cov(\Ewf)=\min\set{\dloc_{b, h}}{b, h\in\omega^\omega\text{ and }\limsup_{n\to\infty}\frac{h(n)}{b(n)}<1}.\]
\begin{multline*}\tag{B}
\sup\set{\bloc_{b, h}}{b, h\in\omega^\omega\text{ and }\limsup_{n\to\infty}\frac{h(n)}{b(n)}<1}\leq\non(\Ewf)\leq\\
  \sup\left(\{\bfrak\}\cup\set{\bloc_{b, h}}{b, h\in\omega^\omega\text{ and }\limsup_{n\to\infty}\frac{h(n)}{b(n)}<1}\right).
  \label{ELc:B}
\end{multline*}
In particular, if $\non(\Ewf)>\bfrak$ then \[\non(\Ewf)=\sup\set{\bloc_{b, h}}{b, h\in\omega^\omega\text{ and }\limsup_{n\to\infty}\frac{h(n)}{b(n)}<1}.\]
\end{theorem}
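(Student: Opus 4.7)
The upper bound in (A) and the lower bound in (B) follow immediately from \autoref{uppbE}. The task is therefore to prove the other inequality in each part; they are dual, so I will sketch (A) in detail and (B) in outline.

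For the lower bound in (A), fix $\kappa$ below $\min(\{\dfrak\} \cup \{\dloc_{b,h} : \limsup h/b < 1\})$ and a family $\{C_\alpha : \alpha < \kappa\} \subseteq \Ewf$. Apply \autoref{lem:combE} to obtain, for each $\alpha$, an increasing $\tilde b_\alpha \in \omega^\omega$ and $\varphi_\alpha \in \Swf_{\tilde b_\alpha}$ with $C_\alpha \subseteq H_{\tilde b_\alpha, \varphi_\alpha}$. Using $\kappa < \dfrak$, pick a single increasing $b \in \omega^\omega$ dominating every $\tilde b_\alpha$, and form $b^*$ as in \autoref{Charb-d}. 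Then \autoref{lem:mon}\ref{lem:mona} provides $\varphi_{\alpha,*} \in \Swf_{b^*}$ with $H_{\tilde b_\alpha, \varphi_\alpha} \subseteq H_{b^*, \varphi_{\alpha,*}}$. Set $b'(n) := 2^{[b^*(n), b^*(n+1))}$ and $h'(n) := |b'(n)|/2^n$, so $h'(n)/|b'(n)| = 2^{-n}$ and the $\limsup$ hypothesis holds. Identifying $2^\omega$ with $\prod b'$ via concatenation, each $\varphi_{\alpha,*}$ becomes a slalom in $\Swf(b',h') \subseteq \Swfw(b',h')$. Since $\kappa < \dloc_{b',h'}$, the family is not $\wLc(b',h')$-dominating: some $x \in \prod b'$ has, for every $\alpha$, infinitely many $n$ with $x(n) \notin \varphi_{\alpha,*}(n)$. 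The associated real then avoids every $H_{b^*, \varphi_{\alpha,*}}$, hence every $C_\alpha$, so $\bigcup_\alpha C_\alpha \neq 2^\omega$.

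For the upper bound in (B), put $\mu := \sup(\{\bfrak\} \cup \{\bloc_{b,h} : \limsup h/b < 1\})$ and fix an unbounded $F \subseteq \omega^\omega$ of increasing functions with $|F| = \bfrak$. For each $b \in F$, form $b^*$ and take $b', h'$ as above; since $\bloc_{b',h'} \leq \mu$, pick a $\wLc(b',h')$-unbounded family $E_b \subseteq \prod b'$ of size $\leq \mu$, viewed as a subset of $2^\omega$ via concatenation. The union $X := \bigcup_{b \in F} E_b$ has size at most $\bfrak \cdot \mu = \mu$. If $X \in \Ewf$, then by \autoref{lem:combE} one finds $\tilde c$ and $\varphi \in \Swf_{\tilde c}$ with $X \subseteq H_{\tilde c, \varphi}$; picking $b \in F$ with $b \not\leq^* \tilde c$, \autoref{lem:mon}\ref{lem:monb} yields $\varphi^* \in \Swfw_{b^*}$ with $H_{\tilde c, \varphi} \subseteq H_{b^*, \varphi^*}$. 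Restricting $\varphi^*$ to the infinite set $A := \{n : |\varphi^*(n)|/|b'(n)| \leq 1/2^n\}$ gives an element of $\Swfw(b',h')$, so the unboundedness of $E_b$ produces $x \in E_b$ with infinitely many $n \in A$ satisfying $x(n) \notin \varphi^*(n)$; this forces $x \notin H_{b^*, \varphi^*}$ and contradicts $x \in X \subseteq H_{\tilde c, \varphi}$.

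The delicate point throughout will be the bookkeeping between the original $\tilde b_\alpha$ (or $\tilde c$) and the unified $b^*$: \autoref{lem:mon} is precisely what lets one re-express an arbitrary $H_{\tilde b, \varphi}$ in terms of $b^*$, and the contrast between $\Swf_{b^*}$ in (A) (from part \ref{lem:mona}) and $\Swfw_{b^*}$ in (B) (from part \ref{lem:monb}) is exactly why \emph{weak} localization cardinals, rather than ordinary ones, appear on both sides of the characterization.
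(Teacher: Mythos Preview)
There is a genuine gap in your proof of the lower bound in (A). The assertion ``Using $\kappa < \dfrak$, pick a single increasing $b \in \omega^\omega$ dominating every $\tilde b_\alpha$'' is incorrect: having $\kappa < \dfrak$ only tells you that the family $\{\tilde b_\alpha : \alpha < \kappa\}$ is \emph{not dominating}, i.e.\ there exists $b$ with $b \not\leq^* \tilde b_\alpha$ for every $\alpha$. Finding a single $b$ that dominates every $\tilde b_\alpha$ would require $\kappa < \bfrak$, which you do not have. Consequently you cannot invoke \autoref{lem:mon}\ref{lem:mona}; you must use \autoref{lem:mon}\ref{lem:monb} instead, exactly as you (correctly) do in part (B). This yields only $\varphi_\alpha^* \in \Swfw_{b^*}$, i.e.\ \emph{partial} slaloms in $\Swfw(b',h')$, and the rest of the argument then goes through using $\kappa < \dloc_{b',h'}$.

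Your closing paragraph therefore has the picture reversed: both (A) and (B) use part~\ref{lem:monb} of \autoref{lem:mon}, and both land in $\Swfw_{b^*}$. The reason weak localization is indispensable in (A) is precisely that $\kappa < \dfrak$ only gives a function escaping the $\tilde b_\alpha$'s, not one bounding them, so one is forced into the weak-slalom setting already there. Indeed, if your version of (A) worked it would prove the stronger inequality with $\dlc$ in place of $\dloc$---but as the paper notes after \autoref{thm:E}, that stronger inequality is consistently false. Your argument for (B) is essentially correct and matches the paper's proof.
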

\begin{proof}
\ 
\eqref{ELc:A}
The second inequality is clear from~\autoref{uppbE}. So we prove the other inequality \[\cov(\Ewf)\geq\min\left(\{\dfrak\}\cup\set{\dloc_{b, h}}{b, h\in\omega^\omega\text{ and }\limsup_{n\to\infty}\frac{h(n)}{b(n)}<1}\right).\] 
To see this, assume $\kappa<\min\left(\{\dfrak\}\cup\set{\dloc_{b, h}}{b, h\in\omega^\omega\text{ and }\limsup_{n\to\infty}\frac{h(n)}{b(n)}<1}\right)$ and $\Jwf\subseteq\Ewf$ of size $\kappa$ . We will prove that $\bigcup\Jwf\neq2^\omega$.

For each $A\in\Jwf$, by \autoref{lem:combE}, there are $b_A\in\omega^\omega$ increasing and $\varphi_A\in\Swf_{b_A}$ such that $A\subseteq H_{b_A, \varphi_A}$. Since $|\Jwf|<\dfrak$, we can find an increasing function $b$ such that $b\not\leq^* b_A$ for $A\in\Jwf$. Then, by \autoref{lem:mon}, we obtain a family $\{\varphi_{A}^*:\, A\in\Jwf\}\subseteq\Swfw_{b^*}$ such that $A\subseteq H_{b^*, \varphi_A^*}$.
Observe that this family corresponds to a family of partial slaloms in $\Swfw(b', h)$ of size ${\leq}\kappa$ with $b'(n):=2^{b^*(n+1)-b^*(n)}$ and $h(n):=\frac{b'(n)}{2^n}$ for all $n$.
Therefore, we can find an $x\in\prod b'$ such that, for all $A\in\Jwf$, $\exists^\infty\, n<\omega\colon x(n)\notin\varphi^*_A(n)$. Let $y\in2^\omega$ be such that $x(n)=y{\upharpoonright}[b'(n), b'(n+1))$ for all $n$. It is clear that $y\not\in H_{b^*, \varphi^*_{A}}$ for all $A\in\Jwf$, so $y\notin \bigcup \Jwf$.

\eqref{ELc:B}: The first inequality is clear from \autoref{uppbE}. To see the other inequality, fix an unbounded family $B\subseteq\omega^\omega$ of increasing functions of size $\bfrak$. For each $b\in B$, define $b'(n):=2^{b^*(n+1)-b^*(n)}$ and $h_b(n):=\frac{b'(n)}{2^n}$ for all $n<\omega$, and pick a witness $F_b\subseteq \prod b'$ of $\bloc_{b',h_b}$. For each $x\in F_b$, define $y_x\in 2^\omega$ such that $x(n)=y_x\frestr[b'(n),b'(n+1))$. Let
\[H:=\set{y_x}{x\in F_b,\ b\in B}.\]
It is enough to show that $H\notin\Ewf$. Indeed, if $A\in\Ewf$, by \autoref{lem:combE}, there are $b_A\in\omega^\omega$ increasing and $\varphi\in\Swf_{b_A}$ such that $A\subseteq H_{b_A, \varphi}$. Then, there is some $b\in B$ such that $b\not\leq^* b_A$ so, by \autoref{lem:mon}, there is some $\varphi^*\in \Swfw_{b^*}$ such that $A\subseteq H_{b^*,\varphi^*}$. Since $\varphi^*$ corresponds to a member of $\Swfw(b',h_b)$, there is some $x\in F_b$ such that $x\notin^* \varphi^*$, so $y_x\notin H$. This shows that $A\nsubseteq H$ for all $A\in \Ewf$, so $H\notin\Ewf$.
\end{proof}

We review (without proof) the following results about the cardinal characteristics associated with $\Ewf$.

\begin{theorem}[{\cite{BS1992}}, see also~{\cite[Sec.~2.6]{BJ}}]\label{thm:E}
\ 
  \begin{enumerate}[label =\normalfont (\alph*)]
      \item $\min\{\bfrak,\non(\Nwf)\} \leq\non(\Ewf)\leq\min\{\non(\Mwf),\non(\Nwf)\}$.% and $\sfrak\leq\non(\Ewf)$.
      \item $\max\{\cov(\Mwf),\cov(\Nwf)\}\leq\cov(\Ewf)  \leq\max\{\dfrak,\cov(\Nwf)\}$.
      %and $\cov(\Ewf)\leq\rfrak$.
      %\item $\add(\Ewf,\Nwf)=\cov(\meager)$ and $\cof(\Ewf,\Nwf)=\non(\meager)$.
      %\item $\add(\Mwf)\leq \add(\Ewf,\Mwf)\leq \bfrak$ and $\dfrak\leq \cof(\Ewf,\Mwf)\leq \cof(\Mwf)$. Due to Miller~\cite{Mi}
      \item $\add(\Ewf)=\add(\Mwf)$ and $\cof(\Ewf)=\cof(\Mwf)$.
  \end{enumerate}
\end{theorem}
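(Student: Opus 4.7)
For part (a), the two upper bounds $\non(\Ewf)\leq\non(\Mwf)$ and $\non(\Ewf)\leq\non(\Nwf)$ are immediate from $\Ewf\subseteq\Mwf\cap\Nwf$ (every closed null set is nowhere dense, so every $F_\sigma$ null set is meager). For the lower bound I would show that any $X\subseteq 2^\omega$ with $|X|<\min\{\bfrak,\non(\Nwf)\}$ lies in $\Ewf$. Since $|X|<\non(\Nwf)$, $X\in\Nwf$, so applying \autoref{codenull} with $\varepsilon_n=2^{-n}$ yields $\bar p\in\nc_2$ with $X\subseteq N(\bar p)$ and $\sum_n\Lb(\cp(p_n))<\infty$. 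For each $x\in X$ define $f_x(n):=\min\{i\geq n:\, x\in\cp(p_i)\}$; using $|X|<\bfrak$, choose $g\in\omega^\omega$ dominating all $f_x$ and set
\[
F_g:=\bigcup_{m<\omega}\bigcap_{n\geq m}\bigcup_{i=n}^{g(n)}\cp(p_i).
\]
Each inner intersection is closed, and is bounded in measure by $\sum_{i\geq N}\Lb(\cp(p_i))$ for arbitrary $N\geq m$, hence is null; therefore $F_g\in\Ewf$. The domination $f_x\leq^* g$ gives $X\subseteq F_g$.

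Part (b) dualizes this construction. The lower bounds $\cov(\Mwf),\cov(\Nwf)\leq\cov(\Ewf)$ follow from $\Ewf\subseteq\Mwf\cap\Nwf$. For the upper bound, fix a $\Nwf$-cover $\{B_\beta:\,\beta<\cov(\Nwf)\}$ of $2^\omega$, apply \autoref{codenull} with $\varepsilon_n=2^{-n}$ to obtain summable $\bar p_\beta$ with $B_\beta\subseteq N(\bar p_\beta)$, and fix a dominating family $D\subseteq\omega^\omega$ of size $\dfrak$. For each $(g,\beta)\in D\times\cov(\Nwf)$ define $F^g_\beta$ from $\bar p_\beta$ and $g$ exactly as $F_g$ was defined in part (a). Any $x\in 2^\omega$ lies in some $B_\beta\subseteq N(\bar p_\beta)$; the corresponding $f^\beta_x$ is dominated a.e.\ by some $g\in D$, whence $x\in F^g_\beta$. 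Thus $\{F^g_\beta:\, g\in D,\ \beta<\cov(\Nwf)\}\subseteq\Ewf$ is a cover of $2^\omega$ of size $\max\{\dfrak,\cov(\Nwf)\}$.

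For part (c), I would aim to establish the analogues $\add(\Ewf)=\min\{\bfrak,\cov(\Ewf)\}$ and $\cof(\Ewf)=\max\{\dfrak,\non(\Ewf)\}$ of the standard formulas for $\Mwf$, and combine them with (a), (b), and Cicho\'n's diagram to collapse both sides to $\add(\Mwf)$ and $\cof(\Mwf)$. The inequalities $\add(\Ewf)\leq\bfrak$ and $\dfrak\leq\cof(\Ewf)$ can be exhibited by an explicit scale of $F_\sigma$ null sets of the shape $F_{g_\alpha}$ (as constructed in part (a)) whose union escapes $\Ewf$, together with standard facts $\add(\Ewf)\leq\non(\Ewf)$ and $\cof(\Ewf)\geq\cov(\Ewf)$; matching the ``inner'' terms with $\cov(\Mwf)$ and $\non(\Mwf)$ then reduces to the bounds proved in (a), (b).

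The main obstacle is the direction $\add(\Mwf)\leq\add(\Ewf)$ (and dually $\cof(\Ewf)\leq\cof(\Mwf)$): given $\Fwf\subseteq\Ewf$ with $|\Fwf|<\add(\Mwf)=\min\{\bfrak,\cov(\Mwf)\}$, represent each $A\in\Fwf$ as $A\subseteq H_{b_A,\varphi_A}$ via \autoref{lem:combE}; use $|\Fwf|<\bfrak$ together with \autoref{lem:mon}\,(2) to bring all the $\varphi_A$'s into a common $\Swf_{b^*}$, producing $\varphi'_A\in\Swf_{b^*}$ with $A\subseteq H_{b^*,\varphi'_A}$; and then merge the $|\Fwf|$ many slaloms $\varphi'_A$ into a single slalom $\varphi\in\Swf_{b^*}$ with $H_{b^*,\varphi}\supseteq\bigcup_A H_{b^*,\varphi'_A}$. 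A naive coordinate-wise union breaks the size constraint $|\varphi(n)|\leq 2^{|I_n^*|}/2^n$, so the bound $|\Fwf|<\cov(\Mwf)$ must be used through the Bartoszy\'nski--Miller characterization of $\cov(\Mwf)$ via anti-localization (\autoref{BarMill}) to stagger the coordinates $n_A$ at which each $\varphi'_A$ is absorbed into $\varphi$, ensuring that only boundedly many $\varphi'_A(n)$'s contribute at each $n$. Carrying out this absorption argument precisely is where the real combinatorial work lies.
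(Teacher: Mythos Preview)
The paper states this theorem explicitly \emph{without proof}, deferring to \cite{BS1992} and \cite[Sec.~2.6]{BJ}, so there is no argument in the paper to compare against.

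Your proofs of (a) and (b) are correct and standard: the inclusion $\Ewf\subseteq\Mwf\cap\Nwf$ gives the easy bounds, and the $F_g$ construction---using $|X|<\bfrak$ to dominate the functions $f_x(n)=\min\{i\geq n:\, x\in\cp(p_i)\}$ and thereby trap $X$ inside an $F_\sigma$ null set---is exactly the classical device.

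Part (c), however, has a genuine gap in its overall logic. Even if you establish $\add(\Ewf)=\min\{\bfrak,\cov(\Ewf)\}$, combining this with $\cov(\Mwf)\leq\cov(\Ewf)$ from (b) and the Miller--Truss formula only yields
\[
\add(\Mwf)=\min\{\bfrak,\cov(\Mwf)\}\leq\min\{\bfrak,\cov(\Ewf)\}=\add(\Ewf),
\]
which is precisely the direction you then call ``the main obstacle'' and sketch separately via \autoref{lem:combE}, \autoref{lem:mon} and the merging of slaloms. What your plan does \emph{not} deliver is $\add(\Ewf)\leq\add(\Mwf)$: for that you would need $\min\{\bfrak,\cov(\Ewf)\}\leq\cov(\Mwf)$ whenever $\cov(\Mwf)<\bfrak$, and nothing in (a), (b) or Cicho\'n's diagram forces $\cov(\Ewf)\leq\cov(\Mwf)$ in that regime. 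Nor is $\add(\Ewf)\leq\cov(\Mwf)$ obtainable from $\Ewf\subseteq\Mwf$ alone. The standard route (as in \cite[Sec.~2.6]{BJ}) is to prove the Tukey equivalence $\Ewf\eqT\Mwf$ of the directed orders directly; the direction $\Mwf\leqT\Ewf$, which is what gives $\add(\Ewf)\leq\add(\Mwf)$, requires its own combinatorial argument that your outline does not address. Your merging sketch for the other direction is on the right track but, as you acknowledge, incomplete.
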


One could ask whether we can change the weak localization cardinals by localization cardinals in \autoref{thm:chcovE}. By \autoref{fct:Lc*} we have
\begin{align*}
    \cov(\Ewf) & \leq \min\set{\dlc_{b, h}}{b, h\in\omega^\omega\text{ and }\limsup_{n\to\infty}\frac{h(n)}{b(n)}<1} \text{ and }\\
    \non(\Ewf) & \geq \sup\set{\blc_{b, h}}{b, h\in\omega^\omega\text{ and }\limsup_{n\to\infty}\frac{h(n)}{b(n)}<1},
\end{align*}
but the other inequalities, with localization instead of weak localization, do not hold. To see this, note that the standard $\sigma$-centered poset $\mathbb{E}$ that adds an eventually different real on $\omega^\omega$ increases $\non(\Ewf)$, while it does not increase $\bfrak$ nor $\blc_{b,h}$ for $b,h\in\omega^\omega$ with $0<h<b$. Therefore, if $\aleph_1<\kappa<\lambda=\lambda^{\aleph_0}$ are cardinals with $\kappa$ regular, then the FS iteration of $\mathbb{E}$ of length $\lambda\kappa$ (ordinal product) forces $\bfrak=\blc_{b,h}=\aleph_1$ and $\dfrak=\dlc_{b,h}=\lambda$ for all $b,h\in\omega^\omega$ with $0<h<b$, and $\non(\Ewf)=\cov(\Ewf)=\kappa$.

\section{Null-additive and localization}\label{sec:other}

% We aim to establish the characterization of the uniformity of the null-additive in terms of the localization cardinals. For this, we need the following characterization of the null-additive sets given by Shelah. 

We establish the characterization of the uniformity of the null-additive ideal in terms of the localization cardinals. The proof is based on Shelah's characterization of null-additive sets, which is simpler than the original Pawlikowski's~\cite{paw85} proof.

\begin{definition}
A set $X\subseteq2^\omega$ is \emph{null-additive} if $X+A\in\Nwf$ for all $A\in \Nwf$. (Here the sum on $2^\omega$ is the coordinate-wise sum modulo $2$.) Denote by $\NAwf$ the $\sigma$-ideal of the null-additive sets in $2^\omega$.
\end{definition}

Let $I=\seq{I_n}{n\in\omega}$ be a sequence of finite subsets of $\omega$. For $\varphi\in\prod_{n<\omega}\pts(2^{I_n})$, we define
\[
H_\varphi^*:=\set{x\in\cantor}{\forall^{\infty}\, n\in \omega\colon x{\upharpoonright}I_n\in \varphi(n)}.\\
\]

We employ the following characterization of $\NAwf$ due to Shelah.

\begin{theorem}[{\cite{shmn}}]\label{Shmn}
A set $X\subseteq 2^\omega$ is in $\NAwf$ iff for all $I=\seq{I_n}{n\in\omega}\in\Ior$ there is some $\varphi\in\prod_{n\in \omega}\pts(2^{I_n})$ such that $X\subseteq H_\varphi$ and  $\forall\, n\in \omega\colon |\varphi(n)|\leq n$.    
\end{theorem}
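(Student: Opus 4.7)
The plan is to prove the two directions separately. The backward direction $(\Leftarrow)$ is a direct combinatorial argument combining the hypothesis with the coding of null sets from \autoref{presmall}, while the forward direction $(\Rightarrow)$ is subtler and is where Shelah's cleverness lies.

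For $(\Leftarrow)$, assume the combinatorial condition and let $A\in\Nwf$. My plan is to first produce, via a strengthened version of \autoref{presmall}, some $I\in\Ior$ and $\psi\in\prod_{n<\omega}\pts(2^{I_n})$ such that $A\subseteq N_\psi$ and with the enhanced summability $\sum_{n<\omega}\frac{n\cdot|\psi(n)|}{2^{|I_n|}}<\infty$. This is achieved by grouping consecutive intervals in the initial coding into larger blocks so that the tails decay fast enough (we can force $|\psi(n)|/2^{|I_n|}\leq 2^{-n}/n$ without loss). Apply the hypothesis to $I$ to obtain $\varphi$ with $|\varphi(n)|\leq n$ and $X\subseteq H_\varphi^*$, and set
\[\psi^*(n):=\{s+t\colon s\in\varphi(n),\ t\in\psi(n)\}.\]
Then $|\psi^*(n)|\leq n\cdot|\psi(n)|$, so $\sum_n|\psi^*(n)|/2^{|I_n|}$ converges and hence $N_{\psi^*}\in\Nwf$. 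For $z=x+y$ with $x\in X$ and $y\in A$, the cofinite set $\{n\colon x\frestr I_n\in\varphi(n)\}$ intersected with the infinite set $\{n\colon y\frestr I_n\in\psi(n)\}$ is infinite; on this intersection, $z\frestr I_n=x\frestr I_n+y\frestr I_n\in\psi^*(n)$. Therefore $X+A\subseteq N_{\psi^*}\in\Nwf$, so $X\in\NAwf$.

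For $(\Rightarrow)$, assume $X\in\NAwf$ and let $I\in\Ior$ be given. Since a $\varphi$ for a coarser partition (obtained by grouping consecutive $I_n$'s into larger blocks) can be split back to one for the original $I$ with the same bound $|\varphi(n)|\leq n$, we may coarsen $I$ to ensure $|I_n|\geq n^3$, which gives $\sum_n 2^{-|I_n|}<\infty$. The plan is to consider, for each $\tau=\la\tau_n:\,n<\omega\ra\in\prod_n 2^{I_n}$, the null set
\[A_\tau:=\{y\in2^\omega\colon\exists^\infty\,n\colon y\frestr I_n=\tau_n\},\]
whose nullness follows from Borel--Cantelli. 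Since $X\in\NAwf$, each $X+A_\tau\in\Nwf$ and admits a cover $X+A_\tau\subseteq N_{\psi_\tau}$ (via an analogue of \autoref{presmall} compatible with $I$). By specializing $\tau$ to encode enough information (for instance, letting $\tau$ range over a dense or measure-generic parameter space in $\prod_n 2^{I_n}$) and aggregating the resulting $\psi_\tau$'s, the aim is to extract a single $\varphi$ satisfying both $|\varphi(n)|\leq n$ and $X\subseteq H_\varphi^*$.

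The principal obstacle is precisely this distillation step: each individual cover $N_{\psi_\tau}$ provides only an ``$\exists^\infty$'' bound, whereas $H_\varphi^*$ demands ``$\forall^\infty$'', and moreover the pointwise bound $|\varphi(n)|\leq n$ is considerably stronger than the mere summability of $|\psi_\tau(n)|/2^{|I_n|}$ one obtains from the coding. Resolving this requires Shelah's specific construction in~\cite{shmn}: rather than aggregating a family of translates, one builds a single tailor-made null set $A^*\in\Nwf$ whose $F_\sigma$ structure over the partition $I$ directly encodes a slalom of the desired form, so that reading off $\varphi$ from the cover of $X+A^*$ is immediate. Verifying the combinatorics of such an $A^*$ (and checking $\Lb(A^*)=0$) is the technical heart of the proof.
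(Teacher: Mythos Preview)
The paper does not prove this theorem: it is stated as \autoref{Shmn} with a citation to Shelah and then used as a black box in the next proof. So there is no ``paper's own proof'' to compare against, and I will simply assess your argument on its own.

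Your backward direction $(\Leftarrow)$ is correct and complete. Given $A\in\Nwf$, producing an interval partition $I$ and a slalom $\psi$ with $A\subseteq N_\psi$ and $\sum_n n\cdot|\psi(n)|/2^{|I_n|}<\infty$ is a harmless strengthening of \autoref{presmall}/\autoref{codenull} (just choose $\varepsilon_n$ decaying like $2^{-n}/n$ and group intervals), and then the sumset slalom $\psi^*(n)=\varphi(n)+\psi(n)$ with $|\psi^*(n)|\leq n\cdot|\psi(n)|$ gives $X+A\subseteq N_{\psi^*}\in\Nwf$ exactly as you say.

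Your forward direction $(\Rightarrow)$, however, is not a proof but a proof \emph{plan} that you yourself flag as incomplete. The initial idea of using the translates $A_\tau$ for varying $\tau$ does not work, as you correctly diagnose: each resulting $\psi_\tau$ only gives $\exists^\infty$-information with a summability bound, and there is no mechanism offered for aggregating uncountably many such covers into a single $\varphi$ with the pointwise bound $|\varphi(n)|\leq n$ and the $\forall^\infty$-containment. You then say that Shelah instead builds one tailor-made $A^*$ and reads $\varphi$ off the cover of $X+A^*$, but you do not construct $A^*$, do not verify it is null, and do not explain how the slalom is extracted. That construction is precisely the content of the theorem; without it, the forward direction is missing. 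If you intend this as an actual proof rather than a pointer to \cite{shmn}, you need to write down $A^*$ explicitly (after a suitable coarsening of $I$) and carry out the extraction of $\varphi$ from a cover of $X+A^*$.
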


% \begin{theorem}[{\cite{shmn}}]\label{Shmn}
%  $X\in\NAwf$ iff for every increasing function $f\in\omega^{\omega}$ there is a sequence $\seq{J_n}{n\in\omega}$ such that for all   $n\in\omega$
%  \begin{enumerate}[label=\normalfont(\alph*)]
%      \item\label{Shmna} $J_n\subseteq2^{[f(n), f(n+1))}$, 

%      \item\label{Shmnb} $|J_n|\leq n$, and 

%      \item\label{Shmnc} $X\subseteq\set{x\in2^\omega}{\forall^\infty\, n\in\omega\colon x{\upharpoonright}[f(n), f(n+1))\in J_n}$
%  \end{enumerate}
% \end{theorem}

%The foregoing theorem helps us to set up the following: 
The uniformity of $\NAwf$ is characterized as follows.

\begin{theorem}[{\cite[Lem.~2.2]{paw85}}]
 $\non(\NAwf)=\minLc$.   
\end{theorem}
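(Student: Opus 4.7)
The plan is to deduce both inequalities directly from Shelah's characterization (\autoref{Shmn}), which encodes membership in $\NAwf$ as a uniform bounding condition in a localization structure. Specifically, for any fixed interval partition $I$, the statement ``there exists $\varphi$ with $|\varphi(n)|\leq n$ and $X\subseteq H_\varphi^*$'' says precisely that the family of restrictions $\set{\la x\frestr I_n :\, n<\omega\ra}{x\in X}$ is $\Lc(b_I,\id_\omega)$-bounded, once each $2^{I_n}$ is identified with a subset of some $b_I(n)\geq 2^{|I_n|}$. This translation turns the whole question into an instance of the $\Lc(b,\id_\omega)$ calculus; the two directions of the theorem correspond to the two sides of it.

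For $\non(\NAwf)\geq\minLc$, take $X\subseteq 2^\omega$ with $|X|<\minLc$ and any $I\in\Ior$. Set $b_I(n):=\max\{2^{|I_n|},n+2\}$, so $b_I>\id_\omega$ and consequently $\minLc\leq\blc_{b_I,\id_\omega}$. Fix injections $j_n\colon 2^{I_n}\hookrightarrow b_I(n)$ and consider the family $F:=\set{\la j_n(x\frestr I_n):\, n<\omega\ra}{x\in X}\subseteq\prod b_I$. Since $|F|\leq|X|<\blc_{b_I,\id_\omega}$, some slalom $\psi\in\Swf(b_I,\id_\omega)$ localizes $F$; setting $\varphi(n):=j_n^{-1}[\psi(n)]$ gives $|\varphi(n)|\leq n$ and $X\subseteq H_\varphi^*$. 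As $I$ was arbitrary, \autoref{Shmn} yields $X\in\NAwf$.

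For $\non(\NAwf)\leq\minLc$, fix any $b\in\omega^\omega$ with $b>\id_\omega$ and choose a family $F\subseteq\prod b$ of size $\blc_{b,\id_\omega}$ that is $\Lc(b,\id_\omega)$-unbounded. Pick an interval partition $I$ with $|I_n|\geq\lceil\log_2 b(n)\rceil$ (so $b(n)\leq 2^{|I_n|}$) and injections $\iota_n\colon b(n)\hookrightarrow 2^{I_n}$, and for each $f\in F$ define $x_f\in 2^\omega$ by $x_f\frestr I_n:=\iota_n(f(n))$. Put $X:=\set{x_f}{f\in F}$, so $|X|\leq\blc_{b,\id_\omega}$. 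If $X\in\NAwf$, applying \autoref{Shmn} to the specific partition $I$ would produce $\varphi$ with $|\varphi(n)|\leq n$ and $X\subseteq H_\varphi^*$; pulling back through $\psi(n):=\iota_n^{-1}[\varphi(n)]$ would give a slalom in $\Swf(b,\id_\omega)$ localizing every $f\in F$, contradicting the choice of $F$. Hence $X\notin\NAwf$, so $\non(\NAwf)\leq\blc_{b,\id_\omega}$, and minimizing over $b$ yields $\non(\NAwf)\leq\minLc$.

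Once Shelah's characterization is granted, the argument is essentially a direct translation with no serious obstacle. The only small point of care is matching the parameters of the localization problem to those for which $\blc_{b,\id_\omega}$ is defined and bounded below by $\minLc$ (the $\max$ with $n+2$ in the first direction, the ceilings of logarithms in the second); everything else is bookkeeping with the injections $j_n$ and $\iota_n$.
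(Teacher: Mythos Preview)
Your proof is correct and follows essentially the same route as the paper's: both directions are obtained directly from Shelah's characterization \autoref{Shmn}, translating the condition ``$X\subseteq H^*_\varphi$ with $|\varphi(n)|\leq n$'' into an $\Lc(b,\id_\omega)$-boundedness statement for the family of restriction sequences. The paper identifies $b(n)=2^{|I_n|}$ with $2^{I_n}$ directly (and phrases the inequality $\non(\NAwf)\leq\blc_{b,\id_\omega}$ as a Tukey connection $\Lc(b,\id_\omega)\leqT\Cbf_{\NAwf}$ rather than by exhibiting a non-null-additive set), whereas you insert explicit injections $j_n$, $\iota_n$ and the padding $\max\{2^{|I_n|},n+2\}$; this extra bookkeeping is harmless and makes the parameter constraints a bit more explicit, but the underlying argument is the same.
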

\begin{proof}
For the proof, it is enough to take $b$ from some dominating family, so 
we may assume that there is an $I\in\Ior$ such that $b(n)=2^{|I_n|}$  for all $n<\omega$. We first prove that $\Lc(b,\id_\omega)\leqT\Cbf_{\NAwf}$. To achieve this, it suffices to  find functions $\Psi_-\colon \prod b\to2^\omega$ and $\Psi_+\colon \NAwf\to\Swf(b,\id_\omega)$ such that, for any $x\in\prod b$ and  $X\in \NAwf$, if $\Psi_-(x)\in X$, then $x\in^*\Psi_+(X)$. 

Let $X\in\NAwf$. Then, by applying~\autoref{Shmn}, we can find $\varphi_X\in\prod_{n\in \omega}\pts(2^{I_n})$ such that $X\subseteq H_{\varphi_X}^*$ and  $\forall n\in \omega:|\varphi_X(n)| \leq n$. It is clear that $\varphi_X\in\Swf(b,\id_\omega)$. Put $\Psi_+(X):=\varphi_X$

On the other hand, 
for $x\in\prod b = \prod_{i<\omega}2^{I_i}$, define $\Psi_-(x)\in 2^\omega$ a the concatenation of the binary sequences $\seq{x(n)}{n<\omega}$. It is not hard to see that $(\Psi_-,\Psi_+)$ is the required Tukey connection. 

Now we prove the converse inequality, that is, $\min\set{\blc_{b,\id_\omega}}{b\in\omega^\omega}\leq\non(\NAwf)$. Suppose that $X\subseteq2^{\omega}$ is a set of size ${<}\minLc$, and we show that $X\in\NAwf$. Assume $I\in\Ior$ and let $b(n):=2^{|I_n|}$ for all $n\in\omega$. For $x\in X$, let $\bar x=\seq{x{\upharpoonright}I_n}{n<\omega}$, so $\bar x\in\prod b$. Since $\set{\bar x}{x\in X}$ has size ${<}\minLc$, we can find some $\varphi\in\Swf(b,\id_\omega)$ such that $\bar x\in^* \varphi$ for all $x\in X$. Hence, it is clear that $X\subseteq H_\varphi^*$ and $\forall n\in \omega:|\varphi(n)|\leq n$, so by~employing~\autoref{Shmn} we obtain that $X\in\NAwf$. 
\end{proof}

The second part of the previous proof actually states that $\Cbf_{\NAwf} \leqT \prod_{b\in D}\Lc(b,\id_\omega)$ (direct product of relational systems) where $D$ is some dominating family in $\omega^\omega$ such that, for each $b\in D$, there is some $I\in\Ior$ such that $b(n)=2^{|I_n|}$  for all $n<\omega$.

\subsection*{Acknowledgments}

This paper was developed for the conference proceedings of the Kyoto University RIMS Set Theory Workshop ``New Developments in Forcing and Cardinal Arithmetic", organized by Professor Hiroshi Sakai from Kobe University. The authors are very thankful to him to organize and letting them participate in such a wonderful workshop, and for the financial support the first author received to attend physically at Kyoto University RIMS.

{\small
%\addcontentsline{toc}{section}{References}
\bibliography{left}
\bibliographystyle{alpha}
}

\end{document}